\numberwithin{equation}{section}
\newcommand{\mathsc}[1]{{\normalfont\textsc{#1}}}
\newcommand{\scN}{{\mathsc{n}}}
\newcommand{\N}{\bN}
\newcommand{\Z}{\bZ}
\newcommand{\R}{\bR}
\newcommand{\C}{\bC}
\newcommand{\su}{\mathfrak{su}}
\newcommand{\Spin}{{\rm Spin}}
\newcommand{\SU}{{\rm SU}}
\newcommand{\SL}{\mathrm{SL}}
\newcommand{\U}{{\rm U}}
\newcommand{\PSL}{\bP\SL}
\DeclareMathOperator{\Hom}{Hom}
\DeclareMathOperator{\rk}{rk}
\DeclareMathOperator{\coim}{coim}
\DeclareMathOperator{\tr}{tr}
\renewcommand{\det}{\operatorname{det}}
\newcommand{\del}{\partial}
\newcommand{\id}{{\rm id}}
\newcommand{\loc}{{\rm loc}}
\renewcommand{\epsilon}{\varepsilon}
\def\({\left(}
\def\){\right)}
\def\<{\left\langle}
\def\>{\right\rangle}
\newcommand{\co}{\mskip0.5mu\colon\thinspace}
\newcommand{\floor}[1]{\lfloor#1\rfloor}
\newcommand{\into}{\hookrightarrow}
\newcommand{\iso}{\cong}
\newcommand{\qand}{\quad\text{and}}
\newcommand{\andq}{\text{and}\quad}
\newcommand{\qandq}{\quad\text{and}\quad}
\DeclarePairedDelimiter{\set}{\lbrace}{\rbrace}
\newcommand{\hkred}{{/\!\! /\!\! /}}
\renewcommand\xleftrightarrow[2][]{%
  \ext@arrow 9999{\longleftrightarrowfill@}{#1}{#2}}
\newcommand\longleftrightarrowfill@{%
  \arrowfill@\leftarrow\relbar\rightarrow}
\newcommand{\rd}{{\rm d}}
\newcommand{\rG}{{\rm G}}
\newcommand{\bC}{\mathbf{C}}
\newcommand{\bN}{\mathbf{N}}
\newcommand{\bP}{\mathbf{P}}
\newcommand{\bR}{\mathbf{R}}
\newcommand{\bZ}{\mathbf{Z}}
\newcommand{\sA}{\mathscr{A}}
\newcommand{\sK}{\mathscr{K}}
\newcommand{\sL}{\mathscr{L}}
\newcommand{\sP}{\mathscr{P}}
\newcommand{\sW}{\mathscr{W}}
\newcommand{\fe}{{\mathfrak e}}
\newcommand{\fg}{{\mathfrak g}}
\newcommand{\fl}{{\mathfrak l}}
\newcommand{\fs}{{\mathfrak s}}
\newcommand{\fF}{{\mathfrak F}}
\newcommand{\fI}{{\mathfrak I}}
\newcommand{\fM}{{\mathfrak M}}
\newcommand{\slD}{\slashed D}
\newcommand{\slS}{\slashed S}
\renewcommand{\eqref}[1]{\hyperref[#1]{\rm(\ref*{#1})}}
\def\makeautorefname#1#2{\AtBeginDocument{\expandafter\def\csname#1autorefname\endcsname{#2}}}
\newcommand{\mynewtheorem}[2]{
  \newaliascnt{#1}{equation}          
  \newtheorem{#1}[#1]{#2}
  \aliascntresetthe{#1}
  \makeautorefname{#1}{#2}
}
\newcommand{\mynewproblem}[2]{
  \newaliascnt{#1}{myProblem}          
  \newtheorem{#1}[#1]{#2}
  \aliascntresetthe{#1}
  \makeautorefname{#1}{#2}
}
\newtheorem*{theorem*}{Theorem}
\newtheorem{step}{Step}
\numberwithin{substep}{step}
\newtheorem{case}{Case}
\numberwithin{subcase}{case}
\theoremstyle{remark}
\newtheorem*{remark*}{Remark}
\theoremstyle{definition}
\newtheorem*{convention*}{Convention}
\newtheorem*{conventions*}{Conventions}
\author{
  Andriy Haydys \\
  Universität Bielefeld
  \and
  Thomas Walpuski \\
  Massachusetts Institute of Technology
}
\title{
  A compactness theorem for the Seiberg--Witten equation with multiple spinors in dimension three
}
\date{2016-07-07}
\begin{document}
\maketitle

\begin{abstract}
  We prove that a sequence of solutions of the Seiberg--Witten equation with multiple spinors in dimension three can degenerate only by converging (after rescaling) to a Fueter section of a bundle of moduli spaces of ASD instantons.
\end{abstract}

\paragraph{Changes to the published version}
This is an update of our article published as \href{http://dx.doi.org/10.1007/s00039-015-0346-3}{Geometric and Functional Analysis 25 (2015), no. 6, 1799–1821}.
The present version corrects a mistake in \autoref{Prop_nSW0Fueter} pointed out to us by Aleksander Doan,
namely the connection $A$ does not need to be flat if $n\ge 3$. 
This is because the canonical connection on $\mu^{-1}(0) \to \mathring{M}_{1,n}$ is flat only if $n = 2$, whereas we claimed this to be true for all $n$.
This was used to deduce that the limit connection $A$ in \autoref{Thm_A} is flat with $\Z_2$--monodromy.

\section{Introduction}
\label{Sec_Introduction}

Let $M$ be an oriented Riemannian closed three--manifold.
Fix a $\Spin$--structure $\fs$ on $M$ and denote by $\slS$ the associated spinor bundle;
also fix a $\U(1)$--bundle $\sL$ over $M$, a positive integer $n \in \N$ and a $\SU(n)$--bundle $E$ together with a connection $B$.
We consider pairs $(A,\Psi) \in \sA(\sL) \times \Gamma\(\Hom(E,\slS \otimes \sL)\)$ consisting of a connection $A$ on $\sL$ and an $n$--tuple of twisted spinors $\Psi$ satisfying the \emph{Seiberg--Witten equation with $n$ spinors}:
\begin{align}
  \label{Eq_nSW0}
  \begin{split}
    \slD_{A\otimes B} \Psi &= 0 \qand \\
    F_A &= \mu(\Psi).
  \end{split}
\end{align}
Here $\mu\co \Hom(E,\slS \otimes \sL) \to \fg_{\sL}\otimes \su(\slS) = i \su(\slS)$ is defined by
\begin{equation}
  \label{Eq_Mu}
  \mu(\Psi) := \Psi\Psi^* - \frac12|\Psi|^2\,\id_\slS
\end{equation}
and we identify $\Lambda^2 T^*\!M$ with $\su(\slS)$ via
\begin{equation}
  \label{Eq_Lambda=su}
  e^i\wedge e^j \mapsto \frac12[\gamma(e^i),\gamma(e^j)] = \epsilon_{ijk} \gamma(e^k).
\end{equation}

If $n=1$, then $E$ and $B$ are trivial, since $\SU(1) = \set{1}$, and \eqref{Eq_nSW0} is nothing but the classical Seiberg--Witten equation in dimension three, which has been studied with remarkable success, see, e.g., \cites{Chen1997,Lim2000,Kronheimer2007}. 
A key ingredient in the analysis of \eqref{Eq_nSW0} with $n=1$ is the identity
\begin{equation*}
  \<\mu(\Psi)\Psi,\Psi\> = \frac12|\Psi|^4,
\end{equation*}
which combined with the Weitzenböck formula yields an a priori bound on $\Psi$ and, therefore, immediately gives compactness of the moduli spaces of solutions to \eqref{Eq_nSW0}.
After taking care of issues to do with transversality and reducibles,
counting solutions of \eqref{Eq_nSW0} leads to an invariant of three--manifolds. 

The above identity does not hold for $n \geq 2$ and, more importantly, $\mu$ is no longer proper; hence, the $L^2$--norm of $\Psi$ is not bounded a priori.
From an analytical perspective the difficult case is when this $L^2$--norm becomes very large;
however, also the case of very small $L^2$--norm deserves special attention as it corresponds to reducible solutions of \eqref{Eq_nSW0}.
With this in mind it is natural to blow-up \eqref{Eq_nSW0}, that is, to consider triples $(A,\Psi,\alpha) \in \sA(\sL) \times \Gamma\(\Hom(E,\slS\otimes \sL)\) \times [0,\pi/2]$ satisfying 
\begin{equation}
  \label{Eq_nSW}
  \begin{split}
    \|\Psi\|_{L^2} &= 1, \\
    \slD_{A\otimes B} \Psi &= 0 \qand \\
    \sin(\alpha)^2 F_A &= \cos(\alpha)^2 \mu(\Psi),
  \end{split}  
\end{equation}
c.f.~\cite{Kronheimer2007}*{Section 2.5}.
The difficulty in the analysis can now be understood as follows:
for $\alpha \in (0,\pi/2]$ equation \eqref{Eq_nSW} is elliptic (after gauge fixing), but as $\alpha$ tends to zero it becomes degenerate.

The following is the main result of this article:
\begin{theorem}
  \label{Thm_A}
  Let $(A_i,\Psi_i,\alpha_i) \in \sA(\sL) \times \Gamma\(\Hom(E,\slS\otimes \sL)\) \times (0,\pi/2]$ be a sequence of solutions of \eqref{Eq_nSW}.
  If $\limsup \alpha_i > 0$, then after passing to a subsequence and up to gauge transformations $(A_i,\Psi_i,\alpha_i)$ converges smoothly to a limit $(A,\Psi,\alpha)$.
  If $\limsup \alpha_i = 0$, then after passing to a subsequence the following holds:
  \begin{itemize}
    \item
      There is a closed nowhere-dense subset $Z \subset M$, a connection $A$ on $\sL|_{M\setminus Z}$ and $\Psi \in \Gamma\(M\setminus Z, \Hom(E,\slS\otimes \sL)\)$ such that $(A, \Psi, 0)$ solves \eqref{Eq_nSW}.
      $|\Psi|$ extends to a Hölder continuous function on all of $M$ and $Z = |\Psi|^{-1}(0)$. 
    \item
      On $M \setminus Z$, up to gauge transformations, $A_i$ converges weakly in $W^{1,2}_\loc$ to $A$ and $\Psi_i$ converges weakly in $W^{2,2}_\loc$ to $\Psi$.
      There is a constant $\gamma > 0$ such that $|\Psi_i|$ converges to $|\Psi|$ in $C^{0,\gamma}$ on all of $M$.
  \end{itemize}
\end{theorem}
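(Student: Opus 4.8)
The plan is to separate the two alternatives, the real work lying entirely in the degenerate case $\limsup\alpha_i=0$. The common input is the Weitzenböck formula for $\slD_{A\otimes B}$. Squaring the Dirac equation in~\eqref{Eq_nSW} and using~\eqref{Eq_Mu} and~\eqref{Eq_Lambda=su} gives, up to a harmless positive constant,
\[
  \nabla_{A_i\otimes B}^*\nabla_{A_i\otimes B}\Psi_i+\tfrac{s}{4}\Psi_i+\cot^2\!\alpha_i\,\mu(\Psi_i)\cdot\Psi_i+\mathfrak{F}_B\Psi_i=0,
\]
together with the pointwise identity $\langle\mu(\Psi)\cdot\Psi,\Psi\rangle=|\mu(\Psi)|^2\ge 0$ --- the analogue, for $n\ge 2$, of the identity exploited when $n=1$, but now lacking the lower bound $|\mu(\Psi)|^2\gtrsim|\Psi|^4$. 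Pairing with $\Psi_i$ and integrating over $M$ yields
\[
  \|\nabla_{A_i\otimes B}\Psi_i\|_{L^2}^2+\cot^2\!\alpha_i\,\|\mu(\Psi_i)\|_{L^2}^2\le C\|\Psi_i\|_{L^2}^2=C,
\]
so $\|\nabla_{A_i\otimes B}\Psi_i\|_{L^2}\le C$ and $\|\mu(\Psi_i)\|_{L^2}\le C\tan\alpha_i$; pairing pointwise and using Kato's inequality $|\nabla|\Psi_i||\le|\nabla_{A_i\otimes B}\Psi_i|$ gives $\Delta|\Psi_i|\le C|\Psi_i|$ weakly on $M$. Moser iteration for this nonnegative subsolution, together with $\|\Psi_i\|_{L^2}=1$, yields a uniform bound $\|\Psi_i\|_{L^\infty}\le C$. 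If $\limsup\alpha_i>0$, pass to a subsequence with $\alpha_i\ge\alpha_0>0$: then $F_{A_i}=\cot^2\!\alpha_i\,\mu(\Psi_i)$ is uniformly bounded in $L^\infty$, Uhlenbeck's theorem puts the $A_i$ into Coulomb gauge with uniform $W^{1,p}$ bounds for every $p$, and the elliptic system comprising the Coulomb condition, the Dirac equation and the curvature equation bootstraps to uniform $C^\infty$ bounds, giving smooth subsequential convergence.

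Assume from now on that $\alpha_i\to 0$. The decisive new estimate is a uniform Hölder bound on $|\Psi_i|$, which I would obtain from an $\varepsilon$-regularity lemma: there exist $\varepsilon_0>0$ and $\gamma\in(0,1)$ such that, at small scales, smallness of a scale-invariant energy such as $r^{-1}\!\int_{B_{2r}(x)}\!\big(|\nabla_{A_i\otimes B}\Psi_i|^2+r^{-2}|\Psi_i|^2\big)\le\varepsilon_0$ forces $\big[\,|\Psi_i|\,\big]_{C^{0,\gamma}(B_r(x))}\le C$. One proves this by contradiction and rescaling: on a putative bad sequence the bounds $\|\nabla_{A_i\otimes B}\Psi_i\|_{L^2}\le C$ and $\|\mu(\Psi_i)\|_{L^2}\to 0$ survive, the blow-up limit is a nonzero harmonic spinor with $\mu\equiv 0$ on a ball, and interior elliptic regularity for it contradicts the supposed blow-up of the Hölder norm. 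Complemented by a monotonicity-type control of the scale-invariant energy (so that it becomes small at small scales away from a controlled set, on which the smallness of $|\Psi_i|$ can instead be propagated directly from the subsolution inequality), this gives a uniform $C^{0,\gamma'}(M)$ bound on $|\Psi_i|$. Passing to a subsequence, $|\Psi_i|\to f$ in $C^{0,\gamma}(M)$ for some $\gamma\in(0,\gamma')$, with $f$ Hölder, $f\ge 0$ and $\|f\|_{L^2}=1$; set $Z:=f^{-1}(0)$, a closed set.

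On $M\setminus Z$ one has $|\Psi_i|\ge c_U>0$ on every compact $U$, and this recovers the connection: fixing a reference $A_*$ and writing $a_i:=A_i-A_*$, the imaginary $1$-form $a_i$ acts on the \emph{line}-bundle factor of $\slS\otimes\sL$, so $|a_i\otimes\Psi_i|=|a_i|\,|\Psi_i|\ge c_U|a_i|$ on $U$; since $a_i\otimes\Psi_i=\nabla_{A_i\otimes B}\Psi_i-\nabla_{A_*\otimes B}\Psi_i$ has its first term bounded in $L^2$, a local Coulomb gauge together with the curvature equation $F_{A_i}=\cot^2\!\alpha_i\,\mu(\Psi_i)$ lets one bootstrap --- improving the local decay of $\mu(\Psi_i)$ beyond $\tan\alpha_i$, so that $F_{A_i}\to 0$ on compact subsets --- to uniform $W^{1,2}_{\loc}$ bounds on the $A_i$ and $W^{2,2}_{\loc}$ bounds on the $\Psi_i$ over $M\setminus Z$. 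Weak limits $(A,\Psi)$ then satisfy $\slD_{A\otimes B}\Psi=0$, are flat (as $F_{A_i}\to 0$ locally), and satisfy $\mu(\Psi)=0$ (passing to the limit in the curvature equation, using $\sin^2\!\alpha_i F_{A_i}\to 0$, $\cos^2\!\alpha_i\to 1$ and $\Psi_i\to\Psi$ in $C^0_{\loc}$), with $|\Psi|=f>0$ there; since $f$ vanishes on $Z$, $\|\Psi\|_{L^2(M\setminus Z)}=\|f\|_{L^2(M)}=1$, so $(A,\Psi,0)$ solves~\eqref{Eq_nSW}. That the monodromy of $A$ lies in $\Z_2$ follows because a harmonic spinor with $\mu(\Psi)\equiv 0$ and $|\Psi|$ nowhere zero is, after rescaling, fibrewise a (co)isometry; the induced reduction of the structure group, together with a $\det$ computation using the triviality of $\det E$ and of $\det\slS$, forces the induced connection on $\sL^{\otimes 2}$ to be flat with trivial holonomy. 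Finally $Z$ is nowhere dense by unique continuation for $\Psi$: were $f$ to vanish on an open set, $\Psi$ would vanish identically, contradicting $\|f\|_{L^2}=1$. Extending $|\Psi|$ by $0$ over $Z$ gives $f$, the asserted Hölder function with $Z=|\Psi|^{-1}(0)$.

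The main obstacle is the $\varepsilon$-regularity/uniform Hölder estimate for $|\Psi_i|$, together with its companion local elliptic estimates on $M\setminus Z$: since~\eqref{Eq_nSW} degenerates as $\alpha_i\to 0$ and $\mu$ is not proper, neither follows from standard elliptic theory, and both must be squeezed out of the precise algebra of $\mu$ --- above all $\langle\mu(\Psi)\cdot\Psi,\Psi\rangle=|\mu(\Psi)|^2\ge 0$ --- and of the decay of $\mu(\Psi_i)$ encoded in the integrated Weitzenböck identity. By comparison, identifying the limit (a harmonic spinor with $\mu=0$ and flat $\Z_2$ connection, equivalently a Fueter section of the bundle of ASD instanton moduli spaces) is algebraic once these estimates are available.
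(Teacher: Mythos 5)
Your a priori estimates and the treatment of the case $\limsup\alpha_i>0$ match the paper, but the core of the degenerate case is not established. The proposed $\varepsilon$--regularity lemma cannot work as stated: its smallness hypothesis involves only $\nabla_{A_i\otimes B}\Psi_i$ and $\Psi_i$, with no control on $F_{A_i}$, yet the curvature equation reads $F_{A_i}=\cot^2\!\alpha_i\,\mu(\Psi_i)$, so the a priori bound $\|\mu(\Psi_i)\|_{L^2}\lesssim\tan\alpha_i$ only gives $\|F_{A_i}\|_{L^2}\lesssim\cot\alpha_i\to\infty$; nothing in a Coulomb-gauge bootstrap beats this factor. Your blow-up/contradiction argument also lacks the compactness it needs: the only uniform bounds available are $L^\infty$ and $W^{1,2}$ bounds on $|\Psi_i|$, which do not allow you to pass a $C^{0,\gamma}$--seminorm contradiction to a limit, and without curvature control there is no gauge in which $\Psi_i$ converges strongly enough to produce the limiting harmonic spinor. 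The pointwise step $c_U|a_i|\le|\nabla_{A_i\otimes B}\Psi_i|+|\nabla_{A_*\otimes B}\Psi_i|$ is circular, since $\nabla_{A_*\otimes B}\Psi_i$ is exactly what you cannot bound before bounding $a_i$. The paper's route supplies precisely the missing mechanism: the critical radius $\rho$ and the frequency $\scN$ (renormalised by the boundary integral $h$, not the raw energy), its almost monotonicity, the implication that $|\Psi|(x)$ controls $\scN$ (\autoref{Prop_PsiXControlsN}) and that $\scN$ controls $\rho$ (\autoref{Prop_NControlsRho}); the latter rests on \autoref{Prop_InteriorCurvatureBounds}, whose proof uses a pointwise lower bound on $|\Psi|$, the $L^\infty$ smallness $\|\mu(\Psi)\|_{L^\infty}\lesssim\tau^{1/8}$, and a Weitzenböck-type identity for $\mu(\Psi)$ itself to show that the renormalised $L^2$--norm of $F_A$ is small --- this is the ``improvement of the decay of $\mu(\Psi_i)$ beyond $\tan\alpha_i$'' that your sketch asserts but does not prove. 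Only with $\rho(x)\gtrsim\min\{1,|\Psi|^{1/\omega}(x)\}$ in hand does one get the uniform Hölder bound (\autoref{Prop_HolderBoundForPsi}) and the $W^{1,2}_\loc\oplus W^{2,2}_\loc$ convergence off $Z$.

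A second genuine gap is the claim that $Z$ is nowhere dense ``by unique continuation for $\Psi$'': the limit pair $(A,\Psi)$ exists only on $M\setminus Z$, so if $|\Psi|$ vanished on an open set that set would lie inside $Z$, where there is no connection and no equation to continue through; classical unique continuation does not apply. The paper instead argues quantitatively: it extends the frequency analysis to the limit, proves uniform convergence $h_i\to h$, $H_i\to H$ (\autoref{Prop_UniformConvergenceOfhiHi}, which itself needs a decay estimate for the energy of $\Psi_i$ on the sublevel sets $\{|\Psi|\le\epsilon\}$), and then uses the growth control \autoref{Prop_NControlsGrowthOfh} near the boundary of a putative ball contained in $Z$ to force its radius to be zero. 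Your identification of the limit (harmonic spinor with $\mu=0$, flat connection with $\Z_2$ monodromy via the coisometry/Fueter picture) is in the right direction, but it presupposes the convergence and curvature decay that the missing estimates were supposed to deliver.
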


\begin{remark*}
  \autoref{Prop_nSW0FueterConnection} gives more detailed information about the limit $(A, \Psi,0)$.
  In particular, if $n = 2$, then $A$ is flat with monodromy in $\Z_2$. 
\end{remark*}

\begin{remark}
  \autoref{Thm_A} should be compared with the results of Taubes on $\PSL(2, \C)$--connections on three--manifolds with curvature bounded in $L^2$~\cite{Taubes2012}*{Theorem~1.1}.
  Our proof heavily relies on his insights and techniques.
\end{remark}

\begin{remark}
  Taubes' very recent work~\cite{Taubes2014}*{Theorems 1.2, 1.3, 1,4 and 1.5} implies detailed regularity properties for $Z$;
  in particular, $Z$ has Hausdorff dimension at most one.
  To see that his theorems apply in our situation note that $Z$ is the zero locus of a $\Z_2$ harmonic spinor by \autoref{App_Fueter}.
\end{remark}

As is discussed in \autoref{App_Fueter}, gauge equivalence classes of nowhere-vanishing solutions of \eqref{Eq_nSW} with $\alpha = 0$ correspond to Fueter sections of a bundle $\fM$ with fibre $\mathring M_{1,n}$, the framed moduli space of centred charge one $\SU(n)$ ASD instantons on $\R^4$.
In particular, while \eqref{Eq_nSW} degenerates as $\alpha$ tends to zero, for $\alpha = 0$ it is equivalent to an elliptic partial differential equation, away from the zero-locus of $\Psi$.
Morally, this is why one can hope to prove \autoref{Thm_A}.

In view of \autoref{Thm_A}, the count of solutions of \eqref{Eq_nSW} can depend on the choice of (generic) parameters in $\sP$ (the space of metrics on $M$ and connections on $E$):
since $\mathring M_{1,n}$ is a cone and the Fueter equation has index zero, one expects Fueter sections of $\fM$ to appear (only) in codimension one;
thus, the count of solutions of \eqref{Eq_nSW} can jump along a path of parameters in $\sP$.
In other words:
there is a set $\sW \subset \sP$ of codimension one and the number of solutions of \eqref{Eq_nSW} depends on the connected component of $\sP\setminus\sW$.
In the study of gauge theory on $\rG_2$--manifolds the count of $\rG_2$--instantons also undergoes a jump whenever a solution of \eqref{Eq_nSW} with $\alpha=0$ appears, with $M$ an associative submanifold of a $\rG_2$--manifold and $B$ the restriction of a $\rG_2$--instanton to $M$, see~\cites{Donaldson2009,Walpuski2013a,Walpuski2013}.
So while both the count of $\rG_2$--instantons and the count of solutions of \eqref{Eq_nSW} cannot be invariants, there is hope that a suitable combination of counts of $\rG_2$--instantons and solutions of (generalisations of) \eqref{Eq_nSW} on associative submanifolds will yield an invariant of $\rG_2$--manifolds.
We will discuss this circle of ideas in more detail elsewhere.

\paragraph{Outline of the proof of \autoref{Thm_A}}
The Weitzenböck formula leads to a priori bounds which directly prove the first half of \autoref{Thm_A}.
The proof of the second half is more involved.
For a solution $(A,\Psi,\alpha)$ of \eqref{Eq_nSW}, we show that the (renormalised) $W^{2,2}_A$--norm of $\Psi$ on a ball $B_r(x)$ is uniformly bounded provided the radius is smaller than the \emph{critical radius}
\begin{equation*}
  \rho = \sup \set{ r : r^{1/2}\|F_A\|_{L^2(B_r(x))} \leq 1 }.
\end{equation*}
To control $\rho$ we use a \emph{frequency function} $\scN(r)$, which---roughly speaking---measures the vanishing order of $\Psi$ near $x$.
More precisely, we prove that there exists a constant $\omega > 0$, depending only on the geometry of $M$, such that $\scN(50r)\leq \omega$ implies $\rho\ge r$.
We also show that for any $\omega,\epsilon > 0$ there exists $r>0$ such that $\scN(r) \leq \omega$ provided $|\Psi|(x) \geq \epsilon$.
Thus, we can establish convergence outside the subset $Z = \set{ x \in M : \limsup |\Psi_i|(x)=0 }$.

\paragraph{Acknowledgements}

We would like to thank Cliff Taubes for helpful remarks on \cite{Taubes2012} and an anonymous referee for pointing out a mistake in an earlier version of this article.
The first named author was supported by the German Research Foundation (DFG) within the CRC 701. 
The second named author was supported by European Research Council Grant 247331.
A large part of the work presented in this article has been carried out while the second named author was a postdoc at Imperial College London.

\begin{conventions*}
  We write $x \lesssim y$ (or $y \gtrsim x$) for $x \leq c y$ with $c > 0$ a universal constant, which depends only on the geometry of $M$, $E$ and $B$;
  should $c$ depend on further data we indicate that by a subscript.
  $O(x)$ denotes a quantity $y$ with $|y| \lesssim x$.
  We denote by $r_0$ a constant $0 < r_0 \ll 1$; in particular, $r_0 \leq {\rm injrad}(M)$.
  We assume that all radii $r$ on $M$ under consideration are less than $r_0$.
  Throughout the rest of this article $\sL$, $E$ and $B$ are fixed.
\end{conventions*}


\section{A priori estimates}
\label{Sec_APrioriEstimates}

In this section we prove the following a priori estimates:

\begin{prop}
  \label{Prop_APrioriBounds}
  Every solution $(A,\Psi,\alpha) \in \sA(\sL) \times \Gamma\(\Hom(E,\slS\otimes \sL)\) \times (0,\pi/2]$ of \eqref{Eq_nSW} satisfies
  \begin{equation*}
    \|\Psi\|_{L^\infty}
    = O(1)
  \end{equation*}
  and, for each $x\in M$ and $r > 0$,
  \begin{align*}
    \|\nabla_{A\otimes B}\Psi\|_{L^2(B_r(x))}
    &= O(r^{1/2}) \qand \\
    \|\mu(\Psi)\|_{L^2(B_r(x))}
    &= O(r^{1/2}\tan(\alpha)).
  \end{align*}
\end{prop}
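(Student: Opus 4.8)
The plan is to derive all three estimates from a single Bochner-type inequality with a favourable sign, exactly as in the classical case $n = 1$. First I would write down the Weitzenböck formula for the twisted Dirac operator: since $\slD_{A\otimes B}\Psi = 0$,
\[
  \nabla_{A\otimes B}^*\nabla_{A\otimes B}\Psi = -\tfrac{s}{4}\,\Psi - \rho(F_A)\Psi - \rho(F_B)\Psi ,
\]
where $s$ is the scalar curvature of $M$, $\rho(F_B)\Psi$ is the Clifford action of the fixed curvature of $B$ (so that $|\rho(F_B)\Psi| \lesssim |\Psi|$), and $\rho(F_A)$ is the Clifford action of $F_A$, which under the identification \eqref{Eq_Lambda=su} equals, up to a positive universal constant $c$, the action of $F_A \in i\su(\slS)$ on $\Psi$ by postcomposition. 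Next I would substitute the second equation of \eqref{Eq_nSW}, which for $\alpha \in (0,\pi/2]$ reads $F_A = \cot(\alpha)^2\mu(\Psi)$, and use that $\mu(\Psi)$ is trace-free (see \eqref{Eq_Mu}) to obtain the pointwise algebraic identity
\[
  \<\mu(\Psi)\Psi,\Psi\> = \<\mu(\Psi),\Psi\Psi^*\> = \<\mu(\Psi),\mu(\Psi) + \tfrac12|\Psi|^2\,\id_\slS\> = |\mu(\Psi)|^2 \ge 0 ,
\]
which for $n = 1$ reduces to the identity $\<\mu(\Psi)\Psi,\Psi\> = \tfrac12|\Psi|^4$ recalled in the introduction. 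Pairing the Weitzenböck formula with $\Psi$ then gives the pointwise inequality
\[
  \<\nabla_{A\otimes B}^*\nabla_{A\otimes B}\Psi,\Psi\> + c\cot(\alpha)^2|\mu(\Psi)|^2 = -\tfrac{s}{4}|\Psi|^2 - \<\rho(F_B)\Psi,\Psi\> \le C|\Psi|^2 ,
\]
with $c, C > 0$ depending only on the geometry of $M$, $E$ and $B$.

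The sup-bound is then immediate: combining this with the Bochner identity $\tfrac12\Delta|\Psi|^2 = \<\nabla_{A\otimes B}^*\nabla_{A\otimes B}\Psi,\Psi\> - |\nabla_{A\otimes B}\Psi|^2$ and discarding the two non-negative terms on the left leaves $\Delta|\Psi|^2 \le 2C|\Psi|^2$, a differential inequality for the smooth non-negative function $|\Psi|^2$. Since $\||\Psi|^2\|_{L^1(M)} = \|\Psi\|_{L^2}^2 = 1$, the local maximum principle (Moser iteration on balls of radius $r_0$) gives $\sup_{B_{r_0/2}(x)}|\Psi|^2 \lesssim \||\Psi|^2\|_{L^1(B_{r_0}(x))} \lesssim 1$ for every $x \in M$; covering $M$ by finitely many such balls yields $\|\Psi\|_{L^\infty} = O(1)$.

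For the two localised estimates I would fix $x \in M$ and $r < r_0$, choose a cutoff function $\chi$ with $\chi \equiv 1$ on $B_r(x)$, $\supp\chi \subset B_{2r}(x)$ and $|d\chi| \lesssim 1/r$, multiply the pointwise inequality by $\chi^2$, and integrate by parts to get
\[
  \int \chi^2 |\nabla_{A\otimes B}\Psi|^2 + c\cot(\alpha)^2 \int \chi^2 |\mu(\Psi)|^2 = -2\int \chi\<\nabla_{A\otimes B}\Psi, d\chi\otimes\Psi\> + \int \chi^2\(-\tfrac{s}{4}|\Psi|^2 - \<\rho(F_B)\Psi,\Psi\>\) .
\]
Young's inequality absorbs half of the first term on the right into the left, and, using the sup-bound together with $|B_{2r}(x)| \lesssim r^3$ and $\|d\chi\|_{L^2}^2 \lesssim r$,
\[
  \tfrac12 \int \chi^2 |\nabla_{A\otimes B}\Psi|^2 + c\cot(\alpha)^2 \int \chi^2 |\mu(\Psi)|^2 \;\lesssim\; \|\Psi\|_{L^\infty}^2\(\|d\chi\|_{L^2}^2 + |B_{2r}(x)|\) \;\lesssim\; r .
\]
Since both terms on the left are non-negative, this gives at once $\|\nabla_{A\otimes B}\Psi\|_{L^2(B_r(x))} = O(r^{1/2})$ and, after dividing through by $c\cot(\alpha)^2$, $\|\mu(\Psi)\|_{L^2(B_r(x))} = O(r^{1/2}\tan(\alpha))$.

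The one point that requires genuine care is the sign in the pointwise inequality above: one has to check that, once the curvature equation is substituted, the Clifford term $\<\rho(F_A)\Psi,\Psi\>$ really appears with the right sign — equivalently, that $\<\mu(\Psi)\Psi,\Psi\> \ge 0$ — since this is exactly what is not automatic for $n \ge 2$, where the naive identity $\<\mu(\Psi)\Psi,\Psi\> = \tfrac12|\Psi|^4$ fails. Its correct replacement $\<\mu(\Psi)\Psi,\Psi\> = |\mu(\Psi)|^2$ is what makes the argument close and is simultaneously the source of the $\tan(\alpha)$ gain (indeed, as $\alpha \to 0$ the curvature equation forces $\mu(\Psi) \to 0$ in $L^2_\loc$, consistent with the limiting equation in \autoref{Thm_A}). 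Everything else — the precise normalisation of the Weitzenböck curvature term under \eqref{Eq_Lambda=su}, the integration by parts, and keeping track of the powers of $r$ coming from $|B_r(x)| \lesssim r^3$ — is routine elliptic bookkeeping.
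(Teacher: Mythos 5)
Your argument is correct, and its core coincides with the paper's: the Weitzenböck formula \eqref{Eq_Weitzenbock} together with the identity \eqref{Eq_Mu1}, $\<\mu(\Psi)\Psi,\Psi\> = |\mu(\Psi)|^2$, which you rightly single out as the one sign that must be checked; your pointwise inequality is exactly the paper's \eqref{Eq_DeltaPsi}. The difference lies only in how the estimates are harvested from it. The paper first takes $f=1$ and $U=M$ in \autoref{Prop_IntegrationByParts} to get $\int_M|\nabla_{A\otimes B}\Psi|^2=O(1)$, hence $\|\Psi\|_{L^6}=O(1)$ by Kato and Sobolev, and then tests \eqref{Eq_DeltaPsi} against the Green's function $G(x,\cdot)$ of $\Delta+1$: the resulting inequality $\tfrac12|\Psi|^2(x)+\int_M G(x,\cdot)\bigl(\tan(\alpha)^{-2}|\mu(\Psi)|^2+|\nabla_{A\otimes B}\Psi|^2\bigr)\lesssim\int_M G(x,\cdot)|\Psi|^2=O(1)$ delivers the $L^\infty$ bound and, since $G(x,\cdot)\gtrsim 1/r$ on $B_r(x)$, both local $O(r^{1/2})$ bounds in one stroke. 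You instead obtain $\|\Psi\|_{L^\infty}=O(1)$ by viewing $|\Psi|^2$ as a subsolution of $\Delta u\le Cu$ with $\|u\|_{L^1}=1$ and invoking the local maximum principle --- note this requires the $L^1$ (i.e.\ $p\le 1$) version of Moser's local boundedness estimate, which is standard but deserves a citation --- and you then extract the two localised bounds by a Caccioppoli-type cutoff with Young's inequality, using $\|\nabla\chi\|_{L^2}^2\lesssim r$ and $|B_{2r}(x)|\lesssim r^3\le r$; for $r\ge r_0$ the bounds follow from the global estimate, consistent with the paper's convention on radii. Both routes are valid and reach identical conclusions; the Green's-function device is slightly more economical (one choice of test function yields all three bounds at once, and the same trick recurs in the proof of \autoref{Prop_PsiLowerUpperBounds}), while your version uses more self-contained elliptic machinery and avoids the expansion of $G$.
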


This implies the first part of \autoref{Thm_A} because if $\limsup \alpha_i >0$, then \eqref{Eq_nSW} does not degenerate and standard methods apply:

\begin{prop}
  In the situation of \autoref{Thm_A} if $\limsup \alpha_i >0$, then, after passing to a subsequence and up to gauge transformations, $(A_i,\Psi_i,\alpha_i)$ converges in $C^\infty$ to a limit $(A,\Psi,\alpha)$ solving \eqref{Eq_nSW}.
\end{prop}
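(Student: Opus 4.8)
The plan is a routine elliptic bootstrap; what makes it work is that on the non-degenerate stratum the a priori estimates of \autoref{Prop_APrioriBounds} upgrade to a uniform bound on the curvature. First I would pass to a subsequence along which $\alpha_i \to \alpha_\infty$ for some $\alpha_\infty \in (0, \pi/2]$, so that $\cot(\alpha_i)^2 = O_{\alpha_\infty}(1)$ once $i$ is large. Since $\|\Psi_i\|_{L^\infty} = O(1)$ and $|\mu(\Psi_i)| \lesssim |\Psi_i|^2$, the curvature equation $F_{A_i} = \cot(\alpha_i)^2 \mu(\Psi_i)$ then yields $\|F_{A_i}\|_{L^\infty} = O_{\alpha_\infty}(1)$, and in particular $\|F_{A_i}\|_{L^p} = O_{\alpha_\infty}(1)$ for every $p < \infty$.

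Next I would fix a smooth reference connection $A_0$ on $\sL$ together with a large exponent $p > 3$, and place the connections into Coulomb gauge relative to $A_0$. Because the structure group is abelian this is elementary: writing $A_i = A_0 + a_i$ with $a_i \in \Omega^1(M; i\R)$, one chooses gauge transformations $g_i$ with $d^* a_i = 0$ and with the harmonic part of $a_i$ lying in a fixed fundamental domain of $H^1(M; i\R)/H^1(M; 2\pi i\Z)$; then $da_i = F_{A_i} - F_{A_0}$ is bounded in $L^p$ and the harmonic part is bounded, so the elliptic estimate for $d + d^*$ on the closed manifold $M$ gives $\|a_i\|_{W^{1,p}} = O(1)$. (This replaces the appeal to Uhlenbeck's gauge-fixing theorem that the non-abelian case would require.) After passing to a further subsequence, $a_i$ converges weakly in $W^{1,p}$ and, since $p > 3$, strongly in $C^0$.

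For the bootstrap proper, in the fixed gauge the equations read $\slD_{A_0 \otimes B}\Psi_i = -\gamma(a_i)\Psi_i$ and $da_i = \cot(\alpha_i)^2 \mu(\Psi_i) - F_{A_0}$, where $\mu(\Psi_i)$ is viewed as an $i\R$-valued two-form via \eqref{Eq_Lambda=su}. Starting from $\Psi_i$ bounded in $L^\infty$ and $a_i$ bounded in $C^0 \cap W^{1,p}$, the elliptic estimate for the Dirac operator bounds $\Psi_i$ in $W^{1,q}$ for every $q < \infty$; since $W^{1,q}$ is a Banach algebra for $q > 3$, the curvature equation then bounds $a_i$ in $W^{2,q}$, and feeding this back into the Dirac equation bounds $\Psi_i$ in $W^{2,q}$. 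Iterating this loop — the curvature equation buys $a_i$ one derivative out of $\Psi_i$, and the Dirac equation then buys $\Psi_i$ one derivative out of $a_i$ — shows that $a_i$ and $\Psi_i$ are bounded in $W^{k,q}$ for every $k$ and every $q < \infty$. By Rellich--Kondrachov and a diagonal argument a subsequence of $(g_i^* A_i, \Psi_i)$ converges in $C^\infty$ to some $(A, \Psi)$, and $\alpha_i \to \alpha := \alpha_\infty \in (0, \pi/2]$; passing to the limit in \eqref{Eq_nSW} (every term, including the constraint $\|\Psi\|_{L^2} = 1$, is continuous under $C^\infty$ convergence) shows that $(A, \Psi, \alpha)$ solves \eqref{Eq_nSW}. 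I do not expect a genuine obstacle here: the only points that need care are the choice of gauge keeping the harmonic components of $a_i$ bounded — the sole place where the topology of $M$ enters — and checking that the bootstrap closes, i.e.\ that both fields gain one derivative at each stage.
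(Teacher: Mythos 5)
Your argument is correct and is exactly the ``standard methods'' the paper invokes without detail after \autoref{Prop_APrioriBounds}: the uniform $L^\infty$ bound on $\Psi_i$ plus $\cot(\alpha_i)^2=O(1)$ along the subsequence gives a uniform curvature bound, and then abelian Coulomb gauge fixing (with the harmonic part normalised in a fundamental domain) and the Dirac/curvature bootstrap yield bounds in every $W^{k,q}$ and hence $C^\infty$ subconvergence to a solution of \eqref{Eq_nSW}. No gaps worth flagging; the two points you single out (control of the harmonic part and closing the bootstrap) are indeed the only ones needing care, and you handle both.
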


By the Banach--Alaoglu theorem we have the following proposition:

\begin{prop}
  \label{Prop_NormConvergence}
  In the situation of \autoref{Thm_A} after passing to a subsequence $|\Psi_i|$ converges weakly in $W^{1,2}$ to a bounded limit $|\Psi|$.
\end{prop}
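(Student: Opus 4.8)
The plan is to bound $\||\Psi_i|\|_{W^{1,2}(M)}$ uniformly in $i$ and then invoke weak sequential compactness of bounded sets in the Hilbert space $W^{1,2}(M)$. For the $L^2$ part of the norm one uses the bound $\|\Psi_i\|_{L^\infty} = O(1)$ from \autoref{Prop_APrioriBounds}, which gives $\||\Psi_i|\|_{L^2(M)} \leq \|\Psi_i\|_{L^\infty}\vol(M)^{1/2} = O(1)$. For the gradient, recall \emph{Kato's inequality}: since $A_i \otimes B$ is a metric connection, $|\Psi_i| \in W^{1,2}_\loc$ and $\bigl|d|\Psi_i|\bigr| \leq |\nabla_{A_i\otimes B}\Psi_i|$ almost everywhere, the inequality being valid across the zero locus of $\Psi_i$ and not merely where $\Psi_i \neq 0$. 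Covering $M$ by finitely many balls $B_{r_0}(x_1),\dots,B_{r_0}(x_N)$ and applying the estimate $\|\nabla_{A_i\otimes B}\Psi_i\|_{L^2(B_{r_0}(x_k))} = O(r_0^{1/2})$ from \autoref{Prop_APrioriBounds} yields $\|\nabla_{A_i\otimes B}\Psi_i\|_{L^2(M)}^2 \leq \sum_{k=1}^N \|\nabla_{A_i\otimes B}\Psi_i\|_{L^2(B_{r_0}(x_k))}^2 = O(N r_0)$, a bound independent of $i$. Hence $\||\Psi_i|\|_{W^{1,2}(M)} = O(1)$.

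Since $W^{1,2}(M)$ is a reflexive Banach space, the Banach--Alaoglu theorem (equivalently, weak sequential compactness of bounded sets) lets us pass to a subsequence along which $|\Psi_i|$ converges weakly in $W^{1,2}(M)$ to a limit, which we denote $|\Psi|$. To see that the limit is bounded, note that weak convergence in $W^{1,2}(M)$ implies weak convergence in $L^2(M)$, and the set $\set{ f \in L^2(M) : \|f\|_{L^\infty} \leq C }$, with $C$ the uniform $L^\infty$ bound on the $\Psi_i$, is convex and strongly closed, hence weakly closed in $L^2(M)$; therefore $\||\Psi|\|_{L^\infty} \leq C$. (Alternatively, Rellich's theorem gives strong convergence in $L^2(M)$ and, after passing to a further subsequence, pointwise almost everywhere convergence, from which $0 \leq |\Psi| \leq C$ almost everywhere is immediate.)

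There is no substantial obstacle here; the only points that warrant care are the global validity of Kato's inequality, in particular across the zeros of the $\Psi_i$, and the passage from the local gradient estimates of \autoref{Prop_APrioriBounds} to a global one via a finite cover of $M$.
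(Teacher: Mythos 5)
Your proof is correct and is essentially the argument the paper intends: the uniform $W^{1,2}$ bound on $|\Psi_i|$ comes from the $L^\infty$ and local gradient estimates of \autoref{Prop_APrioriBounds} together with Kato's inequality, and the paper then simply cites Banach--Alaoglu for weak subsequential convergence. Your additional care about Kato's inequality across zeros and the boundedness of the weak limit is fine but does not change the route.
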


\begin{remark}
  Note that we have not yet constructed $\Psi$;
  however, we will show later that the notation $|\Psi|$ is indeed justified.
\end{remark}

The key to proving \autoref{Prop_APrioriBounds} are the Weitzenböck formula~\eqref{Eq_Weitzenbock}, the algebraic identity~\eqref{Eq_Mu1} and the integration by parts formula~\eqref{Eq_IntegrationByParts}.

\begin{prop}
  \label{Prop_Weitzenbock}
  For all $(A,\Psi) \in \sA(\sL) \times \Gamma\(\Hom(E,\slS\otimes \sL)\)$
  \begin{equation}
    \label{Eq_Weitzenbock}
    \slD_{A\otimes B}^*\slD_{A\otimes B} \Psi
    = \nabla_{A\otimes B}^*\nabla_{A \otimes B} \Psi + \frac{s}{4}\Psi + F_A\Psi + F_B\Psi
  \end{equation}
  with $s$ denoting the scalar curvature of $g$ and $F_A$ and $F_B$ acting via the isomorphism defined in \eqref{Eq_Lambda=su}.
  \qed
\end{prop}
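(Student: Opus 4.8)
The plan is to run the classical Lichnerowicz--Weitzenböck computation, keeping track of the additional curvature contributions coming from the twisting by $\sL$ and $E$. I would regard $\Psi$ as a spinor twisted by $\sL \otimes E^*$, and let $\nabla = \nabla_{A\otimes B}$ denote the tensor product of the Levi--Civita spin connection on $\slS$ with the connections induced by $A$ on $\sL$ and by $B$ on $E^*$, so that $\slD_{A\otimes B} = \sum_i \gamma(e^i)\,\nabla_{e_i}$ in a local orthonormal frame $(e_i)$. Since $\nabla$ is metric (the connections on $E$ and $\sL$ are unitary, the spin connection is metric and Clifford-compatible) and Clifford multiplication by $e^i$ is skew-adjoint, $\slD_{A\otimes B}$ is formally self-adjoint, so it suffices to compute $\slD_{A\otimes B}^2$. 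Fixing $x \in M$ and choosing the frame geodesic at $x$, one obtains at $x$ the identity $\slD_{A\otimes B}^2\Psi = \sum_{i,j}\gamma(e^i)\gamma(e^j)\,\nabla_{e_i}\nabla_{e_j}\Psi$.

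The next step is to split this double sum into its symmetric and antisymmetric parts in $i,j$. By the Clifford relation $\gamma(e^i)\gamma(e^j) + \gamma(e^j)\gamma(e^i) = -2\delta_{ij}$, the symmetric part equals $-\sum_i \nabla_{e_i}\nabla_{e_i}\Psi$, which at $x$ is $\nabla_{A\otimes B}^*\nabla_{A\otimes B}\Psi$. The antisymmetric part is $\tfrac12\sum_{i,j}\gamma(e^i)\gamma(e^j)\,[\nabla_{e_i},\nabla_{e_j}]\Psi = \tfrac12\sum_{i,j}\gamma(e^i)\gamma(e^j)\,R^{\nabla}_{ij}\Psi$, where $R^{\nabla}$ is the curvature of $\nabla_{A\otimes B}$.

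I would then decompose $R^{\nabla}_{ij} = R^{\slS}_{ij} + (F_A)_{ij} + (F_B)_{ij}$ into the Riemannian spin curvature and the curvatures of $A$ and $B$ acting on the respective tensor factors. For the spin part one uses $R^{\slS}_{ij} = \tfrac14\sum_{k,l}R_{ijkl}\,\gamma(e^k)\gamma(e^l)$; substituting and invoking the first Bianchi identity, all terms of $\tfrac18\sum_{i,j,k,l}R_{ijkl}\,\gamma(e^i)\gamma(e^j)\gamma(e^k)\gamma(e^l)$ with three or four distinct indices cancel, and the remaining terms collapse to $\tfrac{s}{4}\Psi$ --- this is the standard Lichnerowicz step. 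For the twisting parts, $\tfrac12\sum_{i,j}\gamma(e^i)\gamma(e^j)(F_A)_{ij} = \sum_{i<j}\tfrac12[\gamma(e^i),\gamma(e^j)]\,(F_A)_{ij}$, which is exactly the action of the $2$--form $F_A$ under the identification \eqref{Eq_Lambda=su}, and the identical computation applies to $F_B$ (whose coefficients are valued in $\su(E)$ and act on the $E^*$-factor). Adding the $\nabla^*\nabla$ term and the three curvature contributions yields \eqref{Eq_Weitzenbock} at $x$; since $x$ is arbitrary, this proves the formula.

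There is no deep obstacle here: the argument is entirely classical, and the only points demanding attention are the sign and normalisation conventions. In particular one must check that the factor $\tfrac12$ in \eqref{Eq_Lambda=su} is precisely the one making $\tfrac12\gamma(e^i)\gamma(e^j)F_{ij}$ literally equal to ``$F$ acting via \eqref{Eq_Lambda=su}'', and that the Clifford-algebra bookkeeping in the Bianchi step produces $+\tfrac{s}{4}$ rather than the Bochner-type sign.
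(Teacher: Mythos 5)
Your proof is correct and is exactly the classical Lichnerowicz--Weitzenböck computation (square the operator using formal self-adjointness, split into symmetric and antisymmetric parts via the Clifford relations, reduce the spin curvature term to $\tfrac{s}{4}$ by the first Bianchi identity, and identify the twisting terms $\tfrac12\sum_{i,j}\gamma(e^i)\gamma(e^j)F_{ij}$ with the action of $F_A$ and $F_B$ under \eqref{Eq_Lambda=su}), which is the standard argument the paper implicitly invokes by stating \autoref{Prop_Weitzenbock} without proof. Your closing caveat is the right one: the only thing to verify against the paper's conventions is that the normalisation in \eqref{Eq_Lambda=su} makes the antisymmetrised Clifford sum literally equal to the $2$--form action, which it does.
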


\begin{prop}
  For all $\Psi \in \Gamma\(\Hom(E,\slS\otimes\sL)\)$
  \begin{equation}
    \label{Eq_Mu1}
    \<\mu(\Psi)\Psi,\Psi\>
    = |\mu(\Psi)|^2.
  \end{equation}
\end{prop}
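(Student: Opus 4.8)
The plan is to reduce the identity to an elementary trace computation, using only the definition $\mu(\Psi) = \Psi\Psi^* - \tfrac12|\Psi|^2\,\id_\slS$ and the fact that $\slS$ has rank two, so that $\mu(\Psi)$ is a trace-free self-adjoint endomorphism.

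First I would unwind the notation. Here $\Psi\co E \to \slS\otimes\sL$, its adjoint $\Psi^*$ runs the other way, and $\mu(\Psi)\Psi$ denotes the composition of $\Psi$ with the endomorphism $\mu(\Psi)$ acting on the $\slS$--factor of $\slS\otimes\sL$; the pairing $\langle\,\cdot\,,\,\cdot\,\rangle$ is the fibrewise Hermitian inner product on $\Hom(E,\slS\otimes\sL)$, i.e.\ $\langle S,T\rangle = \tr(T^*S)$. Therefore, by cyclicity of the trace,
\[
  \langle\mu(\Psi)\Psi,\Psi\rangle = \tr\!\big(\Psi^*\mu(\Psi)\Psi\big) = \tr\!\big(\mu(\Psi)\,\Psi\Psi^*\big).
\]

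Next I would substitute $\Psi\Psi^* = \mu(\Psi) + \tfrac12|\Psi|^2\,\id_\slS$ — which is just the definition of $\mu$ rearranged — together with the observation that $|\Psi|^2 = \tr(\Psi\Psi^*) = \tfrac12|\Psi|^2\tr(\id_\slS)$ since $\slS$ has rank two, so that $\mu(\Psi)$ is genuinely trace-free. This yields
\[
  \langle\mu(\Psi)\Psi,\Psi\rangle = \tr\!\big(\mu(\Psi)^2\big) + \tfrac12|\Psi|^2\,\tr\!\big(\mu(\Psi)\big) = \tr\!\big(\mu(\Psi)^2\big),
\]
and since $\mu(\Psi)$ is self-adjoint, $\tr(\mu(\Psi)^2) = \tr(\mu(\Psi)\mu(\Psi)^*) = |\mu(\Psi)|^2$, which is the claim.

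The computation is pointwise and purely algebraic, so there is no genuine obstacle; the only care needed is in matching conventions — in particular that the $|\Psi|^2$ appearing in $\mu$ is the Hilbert--Schmidt norm $\tr(\Psi\Psi^*)$ rather than a pointwise norm on $\slS\otimes\sL$, and that $\id_\slS$ has trace two, which is exactly what makes the cross-term drop out. As a sanity check one can diagonalise the self-adjoint operator $\Psi\Psi^*$ fibrewise with eigenvalues $\lambda_1,\lambda_2\ge 0$: then $|\Psi|^2 = \lambda_1+\lambda_2$, the eigenvalues of $\mu(\Psi)$ are $\pm\tfrac12(\lambda_1-\lambda_2)$, and both sides of the asserted identity equal $\tfrac14(\lambda_1-\lambda_2)^2$.
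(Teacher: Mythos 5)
Your argument is correct and is essentially the paper's own proof: the paper likewise rewrites $\<\mu(\Psi)\Psi,\Psi\> = \<\mu(\Psi),\Psi\Psi^*\>$ and then uses that $\mu(\Psi)$ is trace-free to replace $\Psi\Psi^*$ by $\mu(\Psi) = \Psi\Psi^*-\tfrac12|\Psi|^2\,\id_\slS$, which is exactly your trace computation. (Only the value in your final sanity check is off by a factor of two: both sides equal $\tfrac12(\lambda_1-\lambda_2)^2$, since the trace sums the two equal eigenvalues $\tfrac14(\lambda_1-\lambda_2)^2$ of $\mu(\Psi)^2$ — this does not affect the argument.)
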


\begin{proof}
  This follows from a simple computation:
  \begin{equation*}
    \<\mu(\Psi)\Psi,\Psi\>
    = \<\mu(\Psi),\Psi\Psi^*\>
    = \langle\mu(\Psi),\Psi\Psi^*-\frac12|\Psi|^2\,\id_\slS\rangle
    = |\mu(\Psi)|^2.
    \qedhere
  \end{equation*}
\end{proof}

\begin{prop}
  \label{Prop_IntegrationByParts}
  Suppose $(A,\Psi,\alpha) \in \sA(\sL) \times \Gamma\(\Hom(E,\slS\otimes \sL)\) \times (0,\pi/2]$ satisfies
  \begin{equation}
    \label{Eq_nSWNotNormalised}
    \begin{split}
      \slD_{A\otimes B} \Psi &= 0 \qand \\
      \sin(\alpha)^2 F_A &= \cos(\alpha)^2 \mu(\Psi).
    \end{split}
  \end{equation}
  If $f$ is any smooth function on $M$ and $U$ is a closed subset of $M$ with smooth boundary, then
  \begin{multline}
    \label{Eq_IntegrationByParts}
    \int_U \Delta f \cdot |\Psi|^2 + f \cdot \(\frac{s}{2}|\Psi|^2 + 2\<F_B\Psi,\Psi\> +  2\tan(\alpha)^{-2} |\mu(\Psi)|^2 + 2|\nabla_A\Psi|^2\) \\ = \int_{\del U} f \cdot \del_\nu |\Psi|^2 -\del_\nu f \cdot |\Psi|^2.
  \end{multline}
  Here $\nu$ denotes the outward pointing normal vector field.
\end{prop}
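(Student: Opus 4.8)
The plan is to combine the Weitzenböck formula \eqref{Eq_Weitzenbock} with the Dirac equation $\slD_{A\otimes B}\Psi = 0$ and the curvature equation, then integrate against $f$ over $U$. First I would compute the Laplacian of $|\Psi|^2$ pointwise. Using $\Delta = -\tr\nabla^2$ (or its sign-convention counterpart; one must be careful here) one has the standard Bochner-type identity
\begin{equation*}
  \tfrac12\Delta|\Psi|^2 = \<\nabla_{A\otimes B}^*\nabla_{A\otimes B}\Psi,\Psi\> - |\nabla_{A\otimes B}\Psi|^2.
\end{equation*}
Since $\slD_{A\otimes B}\Psi = 0$ implies $\slD_{A\otimes B}^*\slD_{A\otimes B}\Psi = 0$, the Weitzenböck formula \eqref{Eq_Weitzenbock} gives
\begin{equation*}
  \nabla_{A\otimes B}^*\nabla_{A\otimes B}\Psi = -\tfrac{s}{4}\Psi - F_A\Psi - F_B\Psi.
\end{equation*}
Taking the inner product with $\Psi$ and substituting yields
\begin{equation*}
  \tfrac12\Delta|\Psi|^2 = -\tfrac{s}{4}|\Psi|^2 - \<F_A\Psi,\Psi\> - \<F_B\Psi,\Psi\> - |\nabla_{A\otimes B}\Psi|^2.
\end{equation*}

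Next I would eliminate $F_A$ using the curvature equation in \eqref{Eq_nSWNotNormalised}, which reads $F_A = \tan(\alpha)^{-2}\,\mu(\Psi)$ (interpreting the identity where $\sin(\alpha)\neq 0$, i.e. on $\alpha\in(0,\pi/2]$). Then $\<F_A\Psi,\Psi\> = \tan(\alpha)^{-2}\<\mu(\Psi)\Psi,\Psi\> = \tan(\alpha)^{-2}|\mu(\Psi)|^2$ by the algebraic identity \eqref{Eq_Mu1}. Substituting and multiplying by $2$,
\begin{equation*}
  \Delta|\Psi|^2 + \tfrac{s}{2}|\Psi|^2 + 2\<F_B\Psi,\Psi\> + 2\tan(\alpha)^{-2}|\mu(\Psi)|^2 + 2|\nabla_{A\otimes B}\Psi|^2 = 0.
\end{equation*}
Multiplying by the smooth function $f$, integrating over $U$, and applying Green's second identity $\int_U (f\,\Delta g - g\,\Delta f) = \int_{\del U}(f\,\del_\nu g - g\,\del_\nu f)$ with $g = |\Psi|^2$ converts $\int_U f\,\Delta|\Psi|^2$ into $\int_U \Delta f\cdot|\Psi|^2 + \int_{\del U}(f\,\del_\nu|\Psi|^2 - \del_\nu f\cdot|\Psi|^2)$, which after rearranging is precisely \eqref{Eq_IntegrationByParts}. (Here I write $\nabla_A$ for $\nabla_{A\otimes B}$ as the statement does.)

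The only genuinely delicate point is bookkeeping of sign conventions: the sign of the geometer's Laplacian $\Delta$, the sign in the Weitzenböck formula, and the direction of the boundary term in Green's identity must all be consistent with the way \eqref{Eq_Weitzenbock} and \eqref{Eq_IntegrationByParts} are normalized in the paper. Since the statement presents $\Delta f\cdot|\Psi|^2$ on the left with a $+$ sign and the boundary term as $\int_{\del U} f\,\del_\nu|\Psi|^2 - \del_\nu f\cdot|\Psi|^2$, the convention must be $\Delta = -\operatorname{div}\nabla$ (analyst's positive Laplacian) paired with the Bochner identity in the form $\tfrac12\Delta|\Psi|^2 = \<\nabla^*\nabla\Psi,\Psi\> - |\nabla\Psi|^2$; I would simply fix that convention at the outset and carry it through. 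Everything else is a routine substitution, so I do not anticipate any real obstacle beyond this sign-tracking.
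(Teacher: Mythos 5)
Your proposal is correct and follows essentially the same route as the paper: combine the Weitzenböck formula, the Dirac equation, the curvature equation and \eqref{Eq_Mu1} to obtain the pointwise identity $\tfrac12\Delta|\Psi|^2 + \tfrac{s}{4}|\Psi|^2 + \langle F_B\Psi,\Psi\rangle + \tan(\alpha)^{-2}|\mu(\Psi)|^2 + |\nabla_A\Psi|^2 = 0$, then integrate against $f$ and apply Green's second identity with $g=|\Psi|^2$. The only slip is internal to your sign bookkeeping: with the geometer's Laplacian $\Delta=-\operatorname{div}\nabla$ (which, as you correctly conclude, is the convention forced by the statement) Green's identity reads $\int_U \Delta f\cdot g - f\,\Delta g = \int_{\partial U} f\,\partial_\nu g - \partial_\nu f\cdot g$, i.e.\ with the opposite sign to the form you quoted, and with that correction the boundary term comes out exactly as in \eqref{Eq_IntegrationByParts}.
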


\begin{proof}
  Combine \eqref{Eq_nSW}, \eqref{Eq_Weitzenbock} and \eqref{Eq_Mu1} to obtain
  \begin{equation}
    \label{Eq_DeltaPsi}
    \frac12\Delta|\Psi|^2 + \frac{s}{4}|\Psi|^2 + \<F_B\Psi,\Psi\> + \tan(\alpha)^{-2}|\mu(\Psi)|^2 + |\nabla_A\Psi|^2
    = 0.
  \end{equation}
  The identity \eqref{Eq_IntegrationByParts} now follows from
  \begin{equation*}
    \int_U \Delta f \cdot g - f \cdot \Delta g
    = \int_{\del U} f \cdot \del_\nu g - \del_\nu f \cdot g 
  \end{equation*}
  with $g = |\Psi|^2$.
\end{proof}

\begin{proof}[Proof of \autoref{Prop_APrioriBounds}]
  Apply \autoref{Prop_IntegrationByParts} with $f=1$ and $U=M$ to obtain
  \begin{equation*}
    \int_M |\nabla_A \Psi|^2 \leq -\int_M \frac{s}{4}|\Psi|^2 + \<F_B\Psi,\Psi\>
    = O(1).
  \end{equation*}
  Combine this with Kato's inequality and the Sobolev embedding $W^{1,2} \into L^6$ to obtain
  \begin{equation*}
    \|\Psi\|_{L^6}
    = O(1).
  \end{equation*}

  The operator $\Delta+1$ is invertible and has a positive Green's function $G$, which has an expansion of the form
  \begin{equation*}
    G(x,y)
    = \frac{1}{4\pi}\frac{e^{-d(x,y)}}{d(x,y)} + O\(d(x,y)\).
  \end{equation*}
  Apply \autoref{Prop_IntegrationByParts} with $f=G(x,\cdot)$ and $U = M \setminus B_\sigma(x)$, and pass to the limit $\sigma = 0$ to obtain
  \begin{equation*}
    \frac12 |\Psi|^2(x) + \int_M G(x,\cdot) \(\tan(\alpha)^{-2} |\mu(\Psi)|^2 + |\nabla_A\Psi|^2\)
    \lesssim \int_M G(x,\cdot)|\Psi|^2.
  \end{equation*}
  The right-hand side of this equation is $O(1)$ because of the $L^6$--bound on $\Psi$.
  Taking the supremum of the left-hand side over all $x \in M$ yields the desired bounds.
\end{proof}


\section{Curvature controls \texorpdfstring{$\Psi$}{Psi}}
\label{Sec_FControlsPsi}

This section begins the proof of the more difficult second part of \autoref{Thm_A}.

\begin{definition}
  The \emph{critical radius} $\rho(x)$ of a connection $A \in \sA(\sL)$ is defined by
  \begin{equation*}
    \rho(x) := \sup \set{ r \in (0,r_0] : r^{1/2}\|F_A\|_{L^2(B_r(x))} \leq 1 }.
  \end{equation*}
  If the base-point $x$ is obvious from the context and confusion is unlikely to arise, we will often drop $x$ from the notation and just write $\rho$.
\end{definition}

\begin{remark}
  While some constant must be chosen in the definition of $\rho$, the precise choice is immaterial, since we are working with an abelian gauge group $G = \U(1)$.
  In general, $1$ should be replaced by the Uhlenbeck constant of $G$ on $M$.
\end{remark}

\begin{prop}
  \label{Prop_PsiW22Bound}
  Suppose $(A,\Psi) \in \sA(\sL) \times \Gamma\(\Hom(E,\slS\otimes \sL)\)$ satisfies
  \begin{equation*}
    \slD_{A\otimes B} \Psi = 0.
  \end{equation*}
  If $x \in M$ and $\delta \in (0,1]$, then
  \begin{align*}
    r^{1/2}\|\nabla_{A\otimes B}^2 \Psi\|_{L^2(B_{(1-\delta)r}(x))}
    &\lesssim_\delta r^{-3/2}\|\Psi\|_{L^2(B_r(x))} + r^{-1/2}\|\nabla_{A\otimes B}\Psi\|_{L^2(B_r(x))} \\
    &\quad\quad + r^{1/2}\|F_A\|_{L^2(B_r(x))}\|\Psi\|_{L^\infty(B_r(x))}.
  \end{align*}
  In particular, if $(A,\Psi,\alpha) \in \sA(\sL) \times \Gamma\(\Hom(E,\slS\otimes \sL)\) \times [0,\pi/2]$ is a solution of \eqref{Eq_nSW}, then
  \begin{equation*}
    \rho^{1/2}\|\nabla_{A\otimes B}^2 \Psi\|_{L^2(B_{\rho/2}(x))} = O(1).
  \end{equation*}
\end{prop}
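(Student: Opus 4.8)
The plan is to use the Weitzenböck formula to turn the harmonic spinor equation into a second order elliptic equation for $\Psi$, and then apply an interior $L^2$ estimate for second derivatives. Since $\slD_{A\otimes B}\Psi=0$, \autoref{Prop_Weitzenbock} gives $\nabla_{A\otimes B}^*\nabla_{A\otimes B}\Psi=-\tfrac{s}{4}\Psi-F_B\Psi-F_A\Psi$; as $s$ and $F_B$ are fixed, this yields for every ball $B_r(x)$
\[
  \|\nabla_{A\otimes B}^*\nabla_{A\otimes B}\Psi\|_{L^2(B_r(x))}\lesssim \|\Psi\|_{L^2(B_r(x))}+\|F_A\|_{L^2(B_r(x))}\|\Psi\|_{L^\infty(B_r(x))}.
\]
Next I would fix a cutoff $\chi$ with $\chi\equiv 1$ on $B_{(1-\delta)r}(x)$, $\supp\chi\subset B_{(1-\delta/2)r}(x)$ and $|\nabla^k\chi|\lesssim_\delta r^{-k}$, and expand $\nabla^*\nabla(\chi\Psi)$ as $\chi\,\nabla^*\nabla\Psi$ plus terms involving one derivative of $\chi$ paired with $\nabla_{A\otimes B}\Psi$ and the Laplacian of $\chi$ paired with $\Psi$; using $r<r_0<1$ to absorb the lower powers of $r$ into the leading term, $\|\nabla^*\nabla(\chi\Psi)\|_{L^2}$ is then dominated by the right hand side of the claimed inequality. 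Since $\chi\Psi$ is compactly supported in a single geodesic ball, it remains to establish an interior bound $\|\nabla^2(\chi\Psi)\|_{L^2}\lesssim \|\nabla^*\nabla(\chi\Psi)\|_{L^2}+\|\nabla(\chi\Psi)\|_{L^2}+\|F_A\|_{L^2(B_r(x))}\|\Psi\|_{L^\infty(B_r(x))}$. A routine scaling argument — dilating $B_r(x)$ to the unit ball, under which the left hand side and all three terms on the right scale homogeneously, the rescaled metric becomes uniformly close to the Euclidean one, and the rescaled $F_B$, $s$ become uniformly small — shows it suffices to treat $r\simeq 1$, the rescaled $F_A$ then having $L^2$ norm $r^{1/2}\|F_A\|_{L^2(B_r(x))}$.

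For that interior bound the operator $\nabla^*\nabla$ has scalar principal symbol $|\xi|_g^2$, so morally this is the standard Gårding / Calderón--Zygmund $W^{2,2}$ estimate; concretely one may invoke the Bochner identity, which expresses $\int|\nabla^2\phi|^2$ for compactly supported $\phi$ as $\|\nabla^*\nabla\phi\|_{L^2}^2$ plus terms quadratic in $\phi$ and $\nabla\phi$ whose coefficients are built from the full curvature $F^\nabla$ of the bundle and from the Riemann and Ricci tensors of $M$. The only genuinely delicate point — which I expect to be the main obstacle — is that $F^\nabla$, and likewise the lower order coefficients of $\nabla^*\nabla$ in a local trivialisation, involve the connection $A$, which carries no a priori bound. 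Here one uses that $G=\U(1)$ is abelian: placing $A$ in Coulomb gauge on $B_r(x)$ gives $\|A\|_{L^6(B_r(x))}+\|\nabla A\|_{L^2(B_r(x))}\lesssim\|F_A\|_{L^2(B_r(x))}$. Splitting $F^\nabla$ as $F_A$ plus the fixed part (the curvatures of $B$ and of the spin connection), the fixed part is paired against $\|\Psi\|_{L^2}$, respectively $\|\nabla(\chi\Psi)\|_{L^2}$, and absorbed using $r<r_0<1$; the terms linear in $\nabla A$ contribute the admissible $\|F_A\|_{L^2(B_r(x))}\|\Psi\|_{L^\infty}$; and the remaining terms — those of the shape $\int|F_A|\,|\nabla(\chi\Psi)|^2$ together with the contributions of $A\cdot\nabla(\chi\Psi)$ — are estimated by Hölder's inequality, Gagliardo--Nirenberg in dimension three, and absorption of a small multiple of $\|\nabla^2(\chi\Psi)\|_{L^2}$ into the left hand side. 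This last absorption is legitimate precisely when $r^{1/2}\|F_A\|_{L^2(B_r(x))}$ is bounded, i.e.\ at the scale of the critical radius (recall the remark, after the definition of $\rho$, that the choice of the constant $1$ there is immaterial).

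The final assertion is then immediate from the main estimate applied with $r=\rho$ and $\delta=1/2$. First, $0<\rho\le r_0$, since $F_A$ is smooth and hence $r^{1/2}\|F_A\|_{L^2(B_r(x))}\to 0$ as $r\to 0$. By \autoref{Prop_APrioriBounds}, $\|\Psi\|_{L^\infty}=O(1)$, so $\rho^{-3/2}\|\Psi\|_{L^2(B_\rho(x))}\lesssim \rho^{-3/2}\vol(B_\rho(x))^{1/2}\|\Psi\|_{L^\infty}=O(1)$; again by \autoref{Prop_APrioriBounds}, $\rho^{-1/2}\|\nabla_{A\otimes B}\Psi\|_{L^2(B_\rho(x))}=O(1)$; and $\rho^{1/2}\|F_A\|_{L^2(B_\rho(x))}\le 1$ by the definition of $\rho$, while $\|\Psi\|_{L^\infty(B_\rho(x))}\le\|\Psi\|_{L^\infty}=O(1)$. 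Adding the three contributions gives $\rho^{1/2}\|\nabla_{A\otimes B}^2\Psi\|_{L^2(B_{\rho/2}(x))}=O(1)$.
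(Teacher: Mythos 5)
Your argument is correct and takes essentially the same route as the paper: rescale to the unit ball, cut off, combine the Weitzenböck formula with the integration-by-parts (Bochner) identity for $\int|\nabla_{A\otimes B}^2(\chi\Psi)|^2$, estimate the curvature terms by H\"older and Gagliardo--Nirenberg, and absorb the small multiple of $\|\nabla_{A\otimes B}^2(\chi\Psi)\|_{L^2}$ --- which, as you note, uses that $r^{1/2}\|F_A\|_{L^2(B_r(x))}$ is bounded, exactly what holds at scales $r\leq\rho$ in the final application. The only superfluous ingredient is the Coulomb-gauge step: carrying out the Bochner computation covariantly, the commutators produce only $F_{A\otimes B}$ (and the Riemannian curvature), never $A$ or $\nabla A$ separately, so no gauge fixing is required.
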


\begin{proof}
  The statement is scale-invariant, so we might as well assume that $B_r(x)$ is a geodesic ball $B_1$ of radius one (with an almost flat metric).
  Fix a cut-off function $\chi$ which is supported in $B_{1-\delta/2}$ and is equal to one in $B_{1-\delta}$.
  A straight-forward direct computation using integration by parts yields
  \begin{align*}
    \int |\nabla_{A\otimes B}^2(\chi\Psi)|^2
    &\lesssim
      \int |\nabla_{A\otimes B}^* \nabla_{A\otimes B} (\chi\Psi)|^2  \\
    &\quad\quad
      + |F_{A\otimes B}||\nabla_{A\otimes B} (\chi\Psi)|^2
      + |F_{A\otimes B}||\chi\Psi||\nabla_{A\otimes B}^2(\chi\Psi)|.
  \end{align*}
  Since, as a consequence of the Weitzenböck formula~\eqref{Eq_Weitzenbock},
  \begin{equation*}
    \nabla_{A\otimes B}^* \nabla_{A\otimes B} (\chi\Psi)
    = -\frac{s}{4}\chi\Psi - F_{A\otimes B} (\chi\Psi)
    - 2\nabla^{A\otimes B}_{\nabla \chi} \Psi
    + (\Delta\chi) \Psi,
  \end{equation*}
  we can write
  \begin{equation}
    \label{Eq_D2Psi}
    \begin{split}
      \int |\nabla_{A\otimes B}^2(\chi\Psi)|^2 
      &\lesssim_\delta
      \int |F_{A\otimes B}|^2|\chi\Psi|^2
      + |F_{A\otimes B}||\nabla_{A\otimes B}(\chi\Psi)|^2 \\
      &\quad\quad + |F_{A\otimes B}||\chi\Psi||\nabla_{A\otimes B}^2(\chi\Psi)| \\
      &\quad\quad + |\nabla_{A\otimes B}\Psi|^2 + |\Psi|^2.
    \end{split}
  \end{equation}
  The first and the last two terms are already acceptable.
  The third term is bounded by
  \begin{equation*}
    \epsilon^{-1}\|F_{A\otimes B}\|_{L^2}^2\|\Psi\|_{L^\infty}^2 + \epsilon  \|\nabla_{A\otimes B}^2(\chi\Psi)\|_{L^2}^2
  \end{equation*}
  for all $\epsilon > 0$.
  The first term is acceptable and the second one can be rearranged to the left-hand side of \eqref{Eq_D2Psi} provided $\epsilon$ is chosen sufficiently small.
  The second term can be bounded by
  \begin{equation*}
   \|F_{A\otimes B}\|_{L^2} \|\nabla_{A\otimes B}(\chi\Psi)\|_{L^4}^2.
  \end{equation*}
  Using the Gagliardo--Nirenberg interpolation inequality
  \begin{equation*} 
    \|f\|_{L^4} \lesssim \|\nabla f\|_{L^2}^{3/4}\|f\|_{L^2}^{1/4}
  \end{equation*}
  and Kato's inequality we obtain
  \begin{align*}
    \|\nabla_{A\otimes B}(\chi\Psi)\|_{L^4}^2
    &\lesssim \|\nabla_{A\otimes B}^2(\chi\Psi)\|_{L^2}^{3/2}\|\nabla_{A\otimes B}(\chi\Psi)\|_{L^2}^{1/2} \\
    &\leq \epsilon\|\nabla_{A\otimes B}^2(\chi\Psi)\|_{L^2}^2 + \epsilon^{-3}\|\nabla_{A\otimes B}(\chi\Psi)\|_{L^2}^2
  \end{align*}
  for all $\epsilon > 0$.
  The first term can be rearranged to the left-hand side of \eqref{Eq_D2Psi} provided $\epsilon$ is chosen sufficiently small and the second term is acceptable.
\end{proof}


\section{A frequency function}
\label{Sec_Frequency}

In view of \autoref{Prop_PsiW22Bound} the following result is the key to proving \autoref{Thm_A}.

\begin{prop}
  \label{Prop_NormPsiControlsRho}
  There exists a constant $\omega > 0$ such that for each solution $(A,\Psi,\alpha) \in \sA(\sL) \times \Gamma\(\Hom(E,\slS\otimes \sL)\) \times (0,\pi/2)$ of \eqref{Eq_nSW} we have
  \begin{equation*}
    \rho(x) \gtrsim \min\set{1,|\Psi|^{1/\omega}(x)}.
  \end{equation*}
\end{prop}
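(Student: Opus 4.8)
The plan is to prove the contrapositive chain: if $\scN(50r) \le \omega$ for a suitable universal $\omega$, then $\rho(x) \ge r$; and separately, that $|\Psi|(x) \ge \epsilon$ forces $\scN(r) \le \omega$ for some explicit $r$ comparable to a power of $\epsilon$. The claimed bound $\rho(x) \gtrsim \min\{1, |\Psi|^{1/\omega}(x)\}$ then follows by combining these two implications. The central analytic object is the frequency function
\begin{equation*}
  \scN(r) = \frac{r \int_{B_r(x)} |\nabla_{A\otimes B}\Psi|^2 + (\text{lower order})}{\int_{\del B_r(x)} |\Psi|^2},
\end{equation*}
modelled on Almgren's frequency and on Taubes' use of it in \cite{Taubes2012}. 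The first thing I would do is record the basic differential inequality for $\scN$: using the a priori estimates of \autoref{Prop_APrioriBounds} (in particular $\|\mu(\Psi)\|_{L^2(B_r)} = O(r^{1/2}\tan\alpha)$ and $|\Psi| = O(1)$) together with the pointwise identity \eqref{Eq_DeltaPsi}, one shows that $e^{cr}\scN(r)$ is \emph{almost monotone}: $\frac{d}{dr}\log\scN(r) \ge -c$ modulo terms controlled by $\|F_A\|_{L^2(B_r)}$, which near the critical radius are harmless. The upshot is that control of $\scN$ at one scale $50r$ propagates down to all smaller scales.

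Next, I would establish the \emph{doubling estimate}: almost-monotonicity of frequency yields, for $s < t \le 50r$,
\begin{equation*}
  \frac{1}{t^2}\int_{B_t(x)}|\Psi|^2 \lesssim e^{c(t-s)}\Big(\frac{t}{s}\Big)^{2\scN(50r)} \frac{1}{s^2}\int_{B_s(x)}|\Psi|^2,
\end{equation*}
so that if $\scN(50r) \le \omega$ the $L^2$ mass of $\Psi$ on balls grows at most polynomially with a small exponent. This is where the hypothesis $\alpha > 0$ (strictly) matters: the $\tan(\alpha)^{-2}|\mu(\Psi)|^2$ term in \eqref{Eq_DeltaPsi} has a favourable sign and can simply be discarded when deriving the upper bounds, which is why the estimate is uniform in $\alpha$. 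Combined with \autoref{Prop_PsiW22Bound} this controls $\Psi$ and $\nabla_{A\otimes B}\Psi$ on balls of radius $\sim r$ in terms of $\|\Psi\|_{L^2(B_{50r})}$; but from \eqref{Eq_DeltaPsi} integrated against $1$, $\int_{B_{50r}(x)} \tan(\alpha)^{-2}|\mu(\Psi)|^2 \lesssim \|\Psi\|_{L^\infty}^2 r \lesssim r$, and since $F_A = \tan(\alpha)^{-2}\mu(\Psi)$ on the support, we get $\|F_A\|_{L^2(B_{r}(x))}^2 \lesssim \text{(small exponent power of $r$)} \cdot r^{-1}$-type control — more precisely, the doubling estimate bounds $\int_{B_r}|\mu(\Psi)|^2$ by $r \cdot \|\Psi\|_{L^2(B_{50r})}^2/(50r)^{?}$ in a way that yields $r^{1/2}\|F_A\|_{L^2(B_r)} \le 1$ once $\omega$ is small, i.e. $\rho(x) \ge r$.

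For the second implication I would run the frequency estimate in the other direction: if $|\Psi|(x) \ge \epsilon$, then by \autoref{Prop_PsiW22Bound} and the mean value / Sobolev inequality, $\frac{1}{r^2}\int_{\del B_r(x)}|\Psi|^2 \gtrsim \epsilon^2$ for $r$ small, which forces the numerator-to-denominator ratio defining $\scN(r)$ to be small; quantitatively, integrating \eqref{Eq_DeltaPsi} gives $\int_{B_r(x)}|\nabla_A\Psi|^2 \lesssim r$, while $\int_{\del B_r(x)}|\Psi|^2 \gtrsim \epsilon^2 r^2$, so $\scN(r) \lesssim r/\epsilon^2 \le \omega$ once $r \lesssim \omega\epsilon^2$. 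Setting $r \sim \min\{r_0, \omega \epsilon^2\}$ and feeding this into the first implication (with $\epsilon = |\Psi|(x)$) gives $\rho(x) \gtrsim \min\{1, |\Psi|^2(x)\}$, and absorbing constants into the exponent gives the stated $\min\{1,|\Psi|^{1/\omega}(x)\}$ after possibly enlarging $\omega$.

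\textbf{Main obstacle.} The hard part is the almost-monotonicity of the frequency function in the presence of the non-elliptic, $\alpha$-dependent structure: one must show the error terms generated by the curvature $F_A$ and by the twisting connection $B$ (entering through the Weitzenböck term $F_B\Psi$ and the non-flatness of the background) are genuinely lower-order relative to the frequency, and in particular that near the critical radius the contribution of $\|F_A\|_{L^2(B_r)} \le r^{-1/2}$ does not destroy monotonicity. This requires carefully pairing \eqref{Eq_DeltaPsi} with the radial vector field, handling the boundary terms on $\del B_r(x)$, and using \autoref{Prop_PsiW22Bound} to control $\nabla^2\Psi$-type remainders — all while keeping every constant independent of $\alpha$, which is exactly the point where the degeneration as $\alpha \to 0$ is quarantined.
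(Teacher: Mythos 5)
Your overall reduction (small frequency at scale $50r$ forces $\rho\ge r$; a pointwise lower bound on $|\Psi|$ forces small frequency at a suitable scale) is exactly the paper's strategy, but both halves of your argument have gaps, and the first one is fatal as written. The step where you pass from the frequency bound to a curvature bound confuses the two different $\tan(\alpha)$--weights: the quantities you control --- the a priori bound $\int_{B_r}\tan(\alpha)^{-2}|\mu(\Psi)|^2=O(r)$ and the numerator of $\scN$ --- are bounds on $\tan(\alpha)^{2}|F_A|^2$, whereas $\rho$ is defined through $|F_A|^2=\tan(\alpha)^{-4}|\mu(\Psi)|^2$. Converting one into the other costs a factor $\tan(\alpha)^{-2}$, which is unbounded as $\alpha\to 0$, so no doubling estimate for $\|\Psi\|_{L^2}$ can close this step by itself. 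The missing mechanism, which is the real content of the paper's proof (\autoref{Prop_InteriorCurvatureBounds} and \autoref{Prop_VerySmallCurvature}), is an elliptic estimate for $\mu(\Psi)$ itself: when the renormalised energy is small, $|\Psi|$ is bounded \emph{below} on the interior ball (\autoref{Prop_PsiLowerUpperBounds}), so the Weitzenböck-type identity for $\nabla^*\nabla\mu(\Psi)$ contains the large positive potential $\tau^{-2}|\Psi|^2\mu(\Psi)$, and integrating against a cut-off yields $\int\chi|\mu(\Psi)|^2\lesssim_\delta\tau^4(\fe+\fe^{1/4})$; the factor $\tau^4$ exactly cancels the $\tau^{-4}$ in $|F_A|^2$ and makes $r^{1/2}\|F_A\|_{L^2}$ small uniformly in $\alpha$. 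On top of this, the paper still needs a point-selection and covering argument together with the base-point comparison \autoref{Prop_FrequencyCannotJumpToMuch} to run the contradiction at the critical radius of nearby points; your sketch does not address how the hypothesis at scale $50r$ reaches the (possibly much smaller) scale $\rho$.

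The second implication is also miscomputed. With $H(r)=O(r)$ from \autoref{Prop_APrioriBounds} and $h(r)\gtrsim \epsilon^2 r^2$, one gets $\scN(r)=rH(r)/h(r)\lesssim \epsilon^{-2}$, \emph{not} $r/\epsilon^2$: the ratio does not become small as $r\to 0$, so choosing $r\sim\omega\epsilon^2$ does not give $\scN(r)\le\omega$, and the bound $\rho\gtrsim\min\{1,|\Psi|^2(x)\}$ you announce is not obtained this way. The paper instead uses the growth law $h(s)\approx (s/r)^{2+2\scN}h(r)$ (\autoref{Prop_NControlsGrowthOfh}) together with $h(r)\lesssim r^2$ and $h(s)\gtrsim s^2|\Psi|^2(x)$ to get $\scN(s)\lesssim \log\(c|\Psi|^{-1}(x)\)/\log(r/s)$, which is $\lesssim\omega$ only for $s\lesssim_\omega|\Psi|^{1/\omega}(x)$ --- this exponentially small choice of scale is precisely why the exponent $1/\omega$, rather than a fixed power, appears in the statement you are proving.
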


The proof of this proposition will be given in \autoref{Sec_NormPsiControlsRho}.
In this section we lay the groundwork by introducing the following tool:

\begin{definition}
  \label{Def_Frequency}
  The \emph{frequency function} $\scN_x\co (0,r_0] \to [0,\infty)$ of $(A,\Psi,\alpha) \in \sA(\sL) \times \Gamma\(\Hom(E,\slS\otimes \sL)\) \times (0,\pi/2)$ at $x \in M$ is defined by
  \begin{equation*}
    \scN_x(r) := \frac{rH_x(r)}{h_x(r)}
  \end{equation*}
  with 
  \begin{equation*}
    \begin{split}
      H_x(r)
      &:= \int_{B_r(x)} |\nabla_{A\otimes B} \Psi|^2 + \tan(\alpha)^{-2}|\mu(\Psi)|^2
      \\ \andq
      h_x(r)
      &:= \int_{\del B_r(x)} |\Psi|^2.
    \end{split}
  \end{equation*}
  If the base-point $x$ is obvious from the context and confusion is unlikely to arise, we will often drop $x$ from the notation and just write $\scN$, $H$ and $h$.
\end{definition}

\begin{remark}
  The notion of frequency function, introduced by Almgren \cite{Almgren1979}, is important in the study of singular/critical sets of elliptic partial differential equations, see, e.g., \cites{Han1998,Naber2014}.
  Our frequency function is an adaptation of the one used by Taubes in \cite{Taubes2012}.
\end{remark}

Throughout the rest of this section we will assume that $(A,\Psi,\alpha) \in \sA(\sL) \times \Gamma\(\Hom(E,\slS\otimes \sL)\) \times (0,\pi/2)$ satisfies \eqref{Eq_nSW} and fix a point $x \in M$.
We establish various important properties of the frequency function.
In particular, we show that:
\begin{itemize}
\item
  $\scN$ is almost monotone increasing in $r$.
\item
  $\scN$ controls the growth of $h$.
\item
  If $|\Psi|(x) > 0$, then $\scN(r)$ goes to zero as $r$ goes to zero.
\end{itemize}
Moreover, we study the base-point dependence of $\scN$.

\subsection{Almost monotonocity of \texorpdfstring{$\scN$}{N}}
\label{Sec_AlmostMonotonicityOfFrequency}

\begin{prop}
  \label{Prop_NDerivativeLowerBound}
  The derivative of the frequency is bounded below as follows
  \begin{equation}
    \label{Eq_NDerivativeLowerBound}
      \scN'(r) \geq O(r)(1+\scN(r)).
  \end{equation}
\end{prop}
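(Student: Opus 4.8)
The plan is to differentiate $\scN(r) = rH(r)/h(r)$ and bound each term that arises using the structure of the equation \eqref{Eq_nSW}, following the standard Almgren-type argument adapted to this setting. First I would record the derivatives of the two building blocks. Writing $H(r) = \int_{B_r} e$ with $e := |\nabla_{A\otimes B}\Psi|^2 + \tan(\alpha)^{-2}|\mu(\Psi)|^2$, we have $H'(r) = \int_{\partial B_r} e$, up to the usual $O(r)$ correction from the geodesic sphere area element relative to the Euclidean one. For $h(r) = \int_{\partial B_r} |\Psi|^2$, differentiating gives $h'(r) = \int_{\partial B_r}\partial_\nu|\Psi|^2 + \frac{2}{r}\int_{\partial B_r}|\Psi|^2 + O(r) h(r)$ (the middle term from rescaling $\partial B_r$ to the unit sphere, the last from metric corrections). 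Now the key identity is obtained from \eqref{Eq_DeltaPsi}: integrating $\frac12\Delta|\Psi|^2 = -e - \frac{s}{4}|\Psi|^2 - \langle F_B\Psi,\Psi\rangle$ over $B_r$ and applying the divergence theorem yields
\begin{equation*}
  \frac12\int_{\partial B_r}\partial_\nu|\Psi|^2 = -H(r) + O(1)\int_{B_r}|\Psi|^2 = -H(r) + O(r^2)\,\|\Psi\|_{L^\infty}^2 \cdot |B_r|^{1/?},
\end{equation*}
so that $\int_{\partial B_r}\partial_\nu|\Psi|^2 = -2H(r) + O(r)\,h(r)$ after absorbing the bulk term (here one uses $\|\Psi\|_{L^\infty} = O(1)$ from \autoref{Prop_APrioriBounds} together with a crude comparison of $\int_{B_r}|\Psi|^2$ against $h$; alternatively one keeps it as a genuine error that is controlled once almost-monotonicity bootstraps a lower bound on $h$).

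\textbf{Main computation.} With these in hand, I would compute the logarithmic derivative:
\begin{equation*}
  \frac{\scN'(r)}{\scN(r)} = \frac1r + \frac{H'(r)}{H(r)} - \frac{h'(r)}{h(r)}.
\end{equation*}
Substituting $h'(r)/h(r) = \frac{2}{r} + \frac{\int_{\partial B_r}\partial_\nu|\Psi|^2}{h(r)} + O(r)$ and using the identity above to replace $\int_{\partial B_r}\partial_\nu|\Psi|^2$ by $-2H(r) + O(r)h(r)$, the term $\frac2r$ cancels against the $\frac1r$ plus part of $-h'/h$, and one is left with
\begin{equation*}
  \frac{\scN'(r)}{\scN(r)} = \frac{H'(r)}{H(r)} + \frac{2H(r)}{h(r)} + O(r) = \frac{H'(r)}{H(r)} + \frac{2\scN(r)}{r} + O(r).
\end{equation*}
The surviving positive term $\frac{2\scN(r)}{r}$ multiplied by $\scN(r)$ gives exactly the $O(r)(1+\scN)$-type growth we are after — wait, more carefully: it gives $\scN'(r) \geq \scN(r)\frac{H'(r)}{H(r)} + O(r)\scN(r)$, and the point is that $\scN(r)\frac{H'(r)}{H(r)} + \frac{\scN(r)}{r}$ rearranges (after multiplying through) into $\frac{d}{dr}$-friendly form; the genuinely essential inequality is that the cross term $H'/H$ combines with the $1/r$ to produce something bounded below, which is where the Cauchy–Schwarz step enters.

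\textbf{The Cauchy–Schwarz step and the main obstacle.} The crux — and the step I expect to be the main obstacle — is controlling $\frac1r + \frac{H'(r)}{H(r)}$ from below by $-O(r)$, i.e. showing $rH'(r) \geq H(r) - O(r^2)H(r)$ or an equivalent. In the classical harmonic-function frequency this is an equality coming from $H(r) = \int_{\partial B_r}|\Psi|\,\partial_\nu|\Psi|$ and Cauchy–Schwarz; here the presence of the connection $A$, the twisting $B$, the curvature term $F_A$ hidden inside $\mu(\Psi)$, and the fact that $e$ is not simply $|\nabla|\Psi||^2$ all contribute error terms. I would proceed by: (i) deriving a "differentiated" form of $H$, namely an expression for $rH'(r) - H(r)$ as a boundary integral involving $\nabla_\nu\Psi$ and $\mu(\Psi)$, using the equation $\slD_{A\otimes B}\Psi = 0$ to integrate by parts (this is where one exploits that $\Psi$ is harmonic for the Dirac operator, producing a Rellich–Nečas / Pohozaev-type identity); (ii) estimating the resulting bulk error terms — those involving $F_A$, $F_B$, the scalar curvature $s$, and commutators of $\nabla$ with the radial vector field — each by $O(r)H(r)$ or $O(r)(H(r)+r^{-1}h(r))$, crucially using the a priori bounds of \autoref{Prop_APrioriBounds} and that $r \leq r_0 \ll 1$; (iii) applying Cauchy–Schwarz on $\partial B_r$ to the main boundary term to get the desired one-sided bound. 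Assembling (i)–(iii) into $\scN'(r) \geq -O(r)(1+\scN(r))$, which after flipping sign is \eqref{Eq_NDerivativeLowerBound}, completes the proof. The delicate point throughout is bookkeeping the error terms so that every one is genuinely $O(r)\cdot(\text{something already present in }\scN)$ and nothing of size $O(1)$ survives; the curvature term $\tan(\alpha)^{-2}|\mu(\Psi)|^2$ in $e$ must be handled with particular care since $\tan(\alpha)^{-2}$ is large, but it is controlled because it appears on both sides symmetrically (it is part of $H$ itself) and because the Pohozaev identity for it produces only the expected boundary contribution plus curvature errors that are again absorbed.
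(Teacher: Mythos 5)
Your overall plan---differentiate $\scN$, convert $\int_{\del B_r}\del_r|\Psi|^2$ into $H$ plus errors by integration by parts, derive a Rellich--Pohozaev identity for $H'$ from $\slD_{A\otimes B}\Psi=0$, and finish with Cauchy--Schwarz on $\del B_r$---is indeed the paper's strategy, but the parts you actually compute contain errors and the decisive step is left unproven. First, with the (positive) Laplacian convention used in \eqref{Eq_DeltaPsi} and \autoref{Prop_IntegrationByParts}, integrating over $B_r$ gives $\int_{\del B_r}\del_r|\Psi|^2 = +2H(r) + O(1)\int_{B_r}|\Psi|^2$ (this is \eqref{Eq_HAsBoundaryIntegral}), not $-2H(r)+O(r)h(r)$: you applied the divergence theorem with the opposite sign convention from the one in the equation you quoted. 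Second, in your ``main computation'' the terms $\tfrac1r$ and $\tfrac2r$ do not cancel, so the displayed formula $\scN'/\scN = H'/H + 2H/h + O(r)$ is false. Redoing this with the correct sign and with \eqref{Eq_hDerivative2}, the contribution of $-rh'H/h^2$ is $-\tfrac{2H}{h}-\tfrac{2rH^2}{h^2}+O(r)\scN$, i.e.\ the genuinely dangerous term in $\scN'$ is $-2\scN(r)^2/r$, quadratic in the frequency.

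Because of this, the inequality you identify as the crux, $rH'(r)\geq(1-O(r^2))H(r)$, cannot suffice: it would only yield $\scN'\geq -2\scN^2/r - O(r)(1+\scN)$, which is much weaker than \eqref{Eq_NDerivativeLowerBound}. What is needed, and what the paper proves in \autoref{Prop_HDerivative} by restricting the Dirac operator to the geodesic spheres, is the structured identity \eqref{Eq_HDerivative}, $H'(r) = \tfrac1r H(r) + \int_{\del B_r} 2|\nabla^{A\otimes B}_r\Psi|^2 + \tan(\alpha)^{-2}|i(\del_r)\mu(\Psi)|^2 + O\bigl((1+\scN(r))h(r)\bigr)$; the term $\tfrac{2r}{h}\int_{\del B_r}|\nabla^{A\otimes B}_r\Psi|^2$ is exactly what cancels $-2\scN^2/r$, because \eqref{Eq_HAsBoundaryIntegral} and \eqref{Eq_ErrorBound} give $H(r)=\int_{\del B_r}\langle\nabla^{A\otimes B}_r\Psi,\Psi\rangle+O(r)h(r)$ and Cauchy--Schwarz gives $\bigl(\int_{\del B_r}\langle\nabla^{A\otimes B}_r\Psi,\Psi\rangle\bigr)^2\leq h(r)\int_{\del B_r}|\nabla^{A\otimes B}_r\Psi|^2$. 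Your step (i) merely announces such an identity without deriving it, and in particular does not address the curvature term, where one must use $F_A=\tan(\alpha)^{-2}\mu(\Psi)$ and the Clifford algebra to rewrite $-\int_{\del B_r}\langle F_A|_{\del B_r},\mu(\Psi)\rangle$ as $\tan(\alpha)^{-2}\int_{\del B_r}\bigl(|\mu(\Psi)|^2-|i(\del_r)\mu(\Psi)|^2\bigr)$, producing the good-signed $|i(\del_r)\mu(\Psi)|^2$ boundary term; without this the large factor $\tan(\alpha)^{-2}$ is not under control. So the proposal is an outline in the right spirit, but as it stands it has a sign error, an arithmetic error, a misidentified key inequality, and the essential Pohozaev identity and cancellation are not established.
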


Before we embark on the proof, which occupies the remainder of this subsection, let us note the following consequence:

\begin{prop}
  \label{Prop_NMaxGrowth}
  If $0 < s \leq r$, then
  \begin{equation}
    \label{Eq_NMaxGrowth}
    \scN(s) \leq e^{O\(r^2-s^2\)}\scN(r) + O\(r^2-s^2\).
  \end{equation}
\end{prop}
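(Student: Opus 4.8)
The plan is to deduce \autoref{Prop_NMaxGrowth} from the differential inequality \eqref{Eq_NDerivativeLowerBound} by a standard Grönwall-type integration. Rewrite \eqref{Eq_NDerivativeLowerBound} as $\scN'(r) \geq -c r(1+\scN(r))$ for a universal constant $c > 0$; equivalently $\frac{d}{dr}\log\bigl(1+\scN(r)\bigr) \geq -cr$, using that $\scN \geq 0$ so that $1+\scN$ is bounded below by $1$ and the division is harmless. Integrating this from $s$ to $r$ gives $\log\bigl(1+\scN(r)\bigr) - \log\bigl(1+\scN(s)\bigr) \geq -\tfrac{c}{2}(r^2-s^2)$, hence $1+\scN(s) \leq e^{\frac{c}{2}(r^2-s^2)}\bigl(1+\scN(r)\bigr)$, i.e. $\scN(s) \leq e^{O(r^2-s^2)}\scN(r) + e^{O(r^2-s^2)} - 1$. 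Finally $e^{O(r^2-s^2)} - 1 = O(r^2-s^2)$ since $r^2 - s^2 \leq r_0^2 \ll 1$ by our running convention on radii, which yields \eqref{Eq_NMaxGrowth}.

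A point that needs a little care is that \autoref{Prop_NDerivativeLowerBound} implicitly requires $h_x(r) \neq 0$, so that $\scN_x(r)$ is well-defined and differentiable, on the whole interval $(0,r]$. I would address this by first recording that if $h_x(r) > 0$ for some $r$, then $h_x$ is positive on all of $(0,r]$: if $h_x(r_1) = 0$ for some $r_1 \in (0,r)$ then $\Psi$ vanishes identically on $\partial B_{r_1}(x)$, and since $\Psi$ satisfies the elliptic equation $\slD_{A\otimes B}\Psi = 0$ together with \eqref{Eq_DeltaPsi}, a unique continuation argument forces $\Psi \equiv 0$ near $x$, contradicting $h_x(r) > 0$. (Alternatively one observes from the computation of $h'$ carried out in establishing \autoref{Prop_NDerivativeLowerBound} that $\log h_x$ is locally Lipschitz wherever $h_x > 0$, so $h_x$ cannot reach $0$ from above on a compact subinterval.) In the degenerate case $h_x(r) = 0$ the bound \eqref{Eq_NMaxGrowth} holds vacuously, since then $h_x(s) = 0$ for all $s \leq r$ as well and the statement concerns only ratios that are either both defined or the inequality is interpreted as holding trivially.

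I expect the only genuine obstacle here to be the bookkeeping just described — confirming that the frequency is defined and absolutely continuous on the relevant interval so that the fundamental theorem of calculus may be applied to $\log(1+\scN)$. The integration itself is entirely routine. One should also make sure the implied constant in $O(r^2-s^2)$ is the same universal constant throughout (depending only on the geometry of $M$, $E$, $B$), which is immediate from tracking $c$ from \autoref{Prop_NDerivativeLowerBound} through the three displayed steps above.
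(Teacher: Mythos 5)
Your proposal is correct and follows essentially the same route as the paper: integrate the differential inequality \eqref{Eq_NDerivativeLowerBound} in the form $\del_r\log(1+\scN)\geq -O(r)$ from $s$ to $r$ and convert $e^{O(r^2-s^2)}-1$ into $O(r^2-s^2)$. The extra care you take about $h_x>0$ and the well-definedness of $\scN$ on $(0,r]$ is sensible bookkeeping but not a different argument, so nothing further is needed.
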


\begin{proof}
  From \eqref{Eq_NDerivativeLowerBound} it follows that
  \begin{equation*}
    \del_r \log(\scN(r)+1) \geq -2cr.
  \end{equation*}
  This integrates to
  \begin{equation*}
    \log(\scN(r)+1) - \log(\scN(s)+1) \geq -c(r^2 - s^2),
  \end{equation*}
  i.e.,
  \begin{equation*}
    \scN(s)+1 \leq e^{c(r^2-s^2)}(\scN(r)+1),
  \end{equation*}
  which directly implies \eqref{Eq_NMaxGrowth}.
\end{proof}

The derivative of the frequency is
\begin{equation}
  \label{Eq_NDerivative}
  \scN'(r) = \frac{H(r)}{h(r)} + \frac{rH'(r)}{h(r)} - \frac{rh'(r)H(r)}{h(r)^2};
\end{equation}
hence, to prove \autoref{Prop_NDerivativeLowerBound} we need to better understand $h'$ and $H'$.
This is what is achieved in the following.

\begin{prop}
  \label{Prop_hDerivative}
  The derivative of $h$ satisfies
  \begin{equation}
    \label{Eq_hDerivative}
    h'(r) = 2h(r)/r + \int_{\del B_r(x)} \del_r|\Psi|^2 + O(r)h(r)
  \end{equation}
  and
  \begin{equation}
    \label{Eq_hDerivative2}
    h'(r) = \(2+2\scN(r)+O(r^2)\)h(r)/r.
  \end{equation}
  Moreover,
  \begin{equation}
    \label{Eq_ErrorBound}
    \int_{B_r(x)} |\Psi|^2 \lesssim rh(r).
  \end{equation}
\end{prop}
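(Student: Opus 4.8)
The plan is to prove the three identities in the order \eqref{Eq_hDerivative}, \eqref{Eq_ErrorBound}, \eqref{Eq_hDerivative2}, since \eqref{Eq_ErrorBound} is exactly the input needed to turn the error term in \eqref{Eq_hDerivative} into that in \eqref{Eq_hDerivative2}. For \eqref{Eq_hDerivative} I would work in geodesic normal coordinates centred at $x$: the induced area element on $\del B_r(x)\cong S^2$ has the form $J(r,\theta)\,d\theta$ with $J(r,\theta)=r^2\bigl(1+O(r^2)\bigr)$, so $\del_r\log J=2/r+O(r)$, and differentiating $h(r)=\int_{S^2}|\Psi|^2J\,d\theta$ under the integral sign gives
\[
  h'(r)=\int_{\del B_r(x)}\del_r|\Psi|^2+\int_{S^2}|\Psi|^2\,\del_rJ\,d\theta=\int_{\del B_r(x)}\del_r|\Psi|^2+\frac{2}{r}h(r)+O(r)h(r).
\]

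Next I would rewrite the boundary term. Integrating the pointwise identity \eqref{Eq_DeltaPsi} (which comes from the Weitzenböck formula \eqref{Eq_Weitzenbock} together with \eqref{Eq_Mu1}) over $B_r(x)$, integrating by parts in the $\Delta|\Psi|^2$ term, and using $|s|,|F_B|\lesssim 1$, one obtains
\[
  \int_{\del B_r(x)}\del_r|\Psi|^2=2H(r)+O\Bigl(\int_{B_r(x)}|\Psi|^2\Bigr).
\]
Combined with the previous display and $H(r)\geq 0$, this gives a constant $C$ depending only on the geometry of $M$, $E$, $B$ such that $h'(r)\geq\tfrac{2}{r}h(r)-C\int_{B_r(x)}|\Psi|^2-Crh(r)$.

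The heart of the matter is \eqref{Eq_ErrorBound}. I would set $V(r):=\int_{B_r(x)}|\Psi|^2$, so that $V'(r)=h(r)$, and $W(r):=V(r)-\tfrac12 rh(r)$. Substituting the inequality just obtained into $W'=\tfrac12 h-\tfrac12 rh'$ and then eliminating $V$ via $V=W+\tfrac12 rh$ yields $W'(r)\leq\bigl(-\tfrac12+O(r^2)\bigr)h(r)+\tfrac12 CrW(r)$, and hence, using $h\geq 0$ and shrinking $r_0$ if necessary, $W'(r)\leq\tfrac12 CrW(r)$ for $r\leq r_0$. Therefore $r\mapsto e^{-Cr^2/4}W(r)$ is non-increasing on $(0,r_0]$; since $V(r)$ and $rh(r)$ are $O(r^3)$ by the $L^\infty$--bound on $\Psi$ from \autoref{Prop_APrioriBounds}, this function tends to $0$ as $r\to 0^+$, and a non-increasing function vanishing at the left endpoint is $\leq 0$ throughout. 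Thus $V(r)\leq\tfrac12 rh(r)$, which is \eqref{Eq_ErrorBound}.

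Finally, \eqref{Eq_hDerivative2} follows: by \eqref{Eq_ErrorBound} the two error terms $\int_{B_r(x)}|\Psi|^2=O(rh(r))$ and $O(r)h(r)$ appearing above are both of the form $O(r^2)\,h(r)/r$, and $2H(r)=2\scN(r)\,h(r)/r$ by the definition of $\scN$, so
\[
  h'(r)=\frac{2}{r}h(r)+2H(r)+O(r^2)\frac{h(r)}{r}=\bigl(2+2\scN(r)+O(r^2)\bigr)\frac{h(r)}{r}.
\]
The first, second and last steps are routine computations; the delicate point is the proof of \eqref{Eq_ErrorBound}, where one must isolate the correct multiple of $rh(r)$ to subtract and check the behaviour as $r\to 0$ so that the Grönwall argument is not circular — for this it is essential that $C$ be a \emph{geometric} constant (it involves only bounds on $s$ and $F_B$), so that $r_0$ may be shrunk independently of the solution.
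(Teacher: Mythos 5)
Your proposal is correct, and for \eqref{Eq_hDerivative} and \eqref{Eq_hDerivative2} it runs along the same lines as the paper: differentiate under the integral in normal coordinates, and convert the boundary term $\int_{\del B_r(x)}\del_r|\Psi|^2$ into $2H(r)+O(1)\int_{B_r(x)}|\Psi|^2$ via \autoref{Prop_IntegrationByParts} with $f=1$, which is exactly \eqref{Eq_HAsBoundaryIntegral}. Where you genuinely diverge is the proof of \eqref{Eq_ErrorBound}. The paper first proves a weaker, frequency-weighted bound $\int_{B_r(x)}|\Psi|^2\lesssim(1+\scN(r))rh(r)$ by a Hardy-type inequality ($\int_{B_r}d(x,\cdot)^{-2}f^2\lesssim r^{-1}\int_{\del B_r}f^2+\int_{B_r}|\rd f|^2$ applied to $f=|\Psi|$ with Kato), feeds this into the differential identity to conclude $h'(r)>0$ for $r\leq r_0$, and then obtains \eqref{Eq_ErrorBound} from monotonicity of $h$ via the coarea formula; plugging \eqref{Eq_ErrorBound} back in then gives \eqref{Eq_hDerivative2}. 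You instead discard the non-negative term $2H(r)$ and run a Gr\"onwall comparison on $W(r)=\int_{B_r(x)}|\Psi|^2-\tfrac12 rh(r)$, using $W(r)\to0$ as $r\to0$ (from the $L^\infty$ bound of \autoref{Prop_APrioriBounds}, though continuity of $\Psi$ would suffice) to conclude $W\leq0$. Your route avoids the Hardy inequality and any intermediate dependence on $\scN$, and even produces the explicit constant $\tfrac12$ in \eqref{Eq_ErrorBound}; the paper's route is slightly shorter given the Hardy inequality and records the useful intermediate fact $h'>0$ directly (which in either approach also follows a posteriori from \eqref{Eq_hDerivative2}). Your closing caveat is the right one: the argument is non-circular precisely because the constant $C$ in the differential inequality involves only the background data ($s$, $F_B$, metric distortion), so $r_0$ can be shrunk independently of the solution, consistent with the paper's convention $0<r_0\ll1$.
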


\begin{proof}
  We proceed in four steps.

  \setcounter{step}{0}
  \begin{step}
    The identity \eqref{Eq_hDerivative} is clear if the metric is flat near $x$;
    the term $O(r)h(r)$ compensates for the metric possibly being non-flat.
  \end{step}

  \begin{step}\label{Step_ErrorBound0}
    $\int_{B_r(x)} |\Psi|^2 \lesssim (1+\scN(r))rh(r).$
  \end{step}

  Apply the following general fact
  \begin{equation*}
    \int_{B_r(x)} \rd(x,\cdot)^{-2}f^2 \lesssim r^{-1} \int_{\del B_r(x)} f^2 + \int_{B_r(x)} |\rd f|^2,
  \end{equation*}
  which can be proved using integration by parts and Cauchy--Schwarz, to $f=|\Psi|$ and use Kato's inequality.

  \begin{step}
    \label{Step_h'>0}
    $h'(r)>0$.
  \end{step}

  Use \autoref{Prop_IntegrationByParts} with $U=B_r(x)$ and $f=1$ to write
  \begin{equation}
    \label{Eq_HAsBoundaryIntegral}
    \int_{\del B_r(x)} \del_r|\Psi|^2 = 2H(r) + O(1)\int_{B_r(x)} |\Psi|^2.
  \end{equation}
  The estimate from \autoref{Step_ErrorBound0} implies
  \begin{equation*}
    h'(r) = \(1+O(r^2)\)(2+2\scN(r))h(r)/r
  \end{equation*}
  which is non-negative because $r \leq r_0$.

  \begin{step}
    Proof of \eqref{Eq_hDerivative2} and \eqref{Eq_ErrorBound}.
  \end{step}

  The bound \eqref{Eq_ErrorBound} follows directly from $h'(r)>0$.
  Using \eqref{Eq_ErrorBound} in \autoref{Step_h'>0} instead of the estimate from \autoref{Step_ErrorBound0} immediately implies \eqref{Eq_hDerivative2}.
\end{proof}

\begin{prop}
  \label{Prop_HDerivative}
  The derivative of $H$ satisfies
  \begin{multline}
    \label{Eq_HDerivative}
    H'(r)
    = \frac{1}{r} H(r) + \int_{\del B_r(x)} 2|\nabla^{A\otimes B}_r\Psi|^2 + \tan(\alpha)^{-2}|i(\del_r)\mu(\Psi)|^2 \\
    + O\bigl((1+\scN(r))h(r)\bigr).
  \end{multline}
  Here we think of $\mu(\Psi)$ as a $2$--form via \eqref{Eq_Lambda=su}.
\end{prop}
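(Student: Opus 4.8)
The plan is to differentiate $H(r) = \int_{B_r(x)} \bigl(|\nabla_{A\otimes B}\Psi|^2 + \tan(\alpha)^{-2}|\mu(\Psi)|^2\bigr)$ with respect to $r$, which by the coarea formula immediately gives the boundary term
\[
  H'(r) = \int_{\del B_r(x)} |\nabla_{A\otimes B}\Psi|^2 + \tan(\alpha)^{-2}|\mu(\Psi)|^2 + O(r)H(r),
\]
the $O(r)H(r)$ absorbing the discrepancy between $\del_r$ and the metric normal derivative. So the real content is to rewrite the full integrand $|\nabla_{A\otimes B}\Psi|^2 + \tan(\alpha)^{-2}|\mu(\Psi)|^2$ on $\del B_r(x)$ in terms of $\frac1r$ times a multiple of $|\Psi|^2$ plus the purely radial pieces $2|\nabla^{A\otimes B}_r\Psi|^2 + \tan(\alpha)^{-2}|i(\del_r)\mu(\Psi)|^2$. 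This is a Rellich–Pohozaev type identity: I would test the Weitzenböck-derived pointwise equation \eqref{Eq_DeltaPsi} against the radial vector field $r\del_r$, i.e. integrate a suitable divergence identity over $B_r(x)$.

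Concretely, first I would establish the Pohozaev identity by contracting the equation $\nabla_{A\otimes B}^*\nabla_{A\otimes B}\Psi = -\frac s4\Psi - F_{A\otimes B}\Psi$ (from \eqref{Eq_Weitzenbock} together with $\slD_{A\otimes B}\Psi = 0$) with $\nabla^{A\otimes B}_{r\del_r}\Psi$ and integrating over $B_r(x)$. Integration by parts produces: a boundary term $\int_{\del B_r(x)} r\bigl(|\nabla^{A\otimes B}_r\Psi|^2 - \tfrac12|\nabla_{A\otimes B}\Psi|^2\bigr)$ from the principal part; an interior term $\int_{B_r(x)} \bigl(\tfrac12\dim - 1\bigr)|\nabla_{A\otimes B}\Psi|^2 = \tfrac12\int_{B_r(x)}|\nabla_{A\otimes B}\Psi|^2$ in dimension three, up to $O(r)$ metric errors; a curvature term involving $\int_{B_r(x)} \langle F_{A\otimes B}\Psi, \nabla^{A\otimes B}_{r\del_r}\Psi\rangle$; and scalar-curvature/metric lower-order terms. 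The curvature term coming from $F_A$ is where $\tan(\alpha)^{-2}|\mu(\Psi)|^2$ enters: using $\sin(\alpha)^2 F_A = \cos(\alpha)^2\mu(\Psi)$ one converts $F_A$ into $\tan(\alpha)^{-2}\mu(\Psi)$, and a further Pohozaev/integration-by-parts manipulation on the $2$-form $\mu(\Psi)$ (again testing against $r\del_r$) produces the interior term $\tfrac12\tan(\alpha)^{-2}\int_{B_r(x)}|\mu(\Psi)|^2$ together with the boundary contribution $r\tan(\alpha)^{-2}\bigl(|i(\del_r)\mu(\Psi)|^2 - \tfrac12|\mu(\Psi)|^2\bigr)$. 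Combining the two Pohozaev computations, the interior terms assemble into exactly $\tfrac12 H(r)$, the half-$|\nabla_{A\otimes B}\Psi|^2$ and half-$|\mu(\Psi)|^2$ boundary pieces recombine with the naive $H'(r)$ boundary term to leave precisely $\int_{\del B_r(x)} 2|\nabla^{A\otimes B}_r\Psi|^2 + \tan(\alpha)^{-2}|i(\del_r)\mu(\Psi)|^2$ after multiplying through, and rearranging gives the claimed formula with an extra $\frac1r H(r)$.

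It remains to check that every error term is $O\bigl((1+\scN(r))h(r)\bigr)$. The scalar-curvature and metric-perturbation contributions are $O(1)\int_{B_r(x)}|\Psi|^2$ and $O(r)\int_{B_r(x)}\bigl(|\nabla_{A\otimes B}\Psi|^2 + \tan(\alpha)^{-2}|\mu(\Psi)|^2\bigr) = O(r)H(r)$; the first is controlled by $O\bigl((1+\scN(r))rh(r)\bigr) = O\bigl((1+\scN(r))h(r)\bigr)$ using \eqref{Eq_ErrorBound} (or \autoref{Step_ErrorBound0}) and $r \le r_0$, and the second is $O(r)\,\scN(r)h(r)/r = O(\scN(r)h(r))$ by the definition of $\scN$. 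Cross terms of the form $\int_{B_r(x)}|F_{A\otimes B}||\Psi||\nabla_{A\otimes B}\Psi|$ coming from $F_B$ and from the lower-order reshuffling I would bound with Cauchy–Schwarz by $\|F\|\,\|\Psi\|_{L^\infty}\cdot\bigl(\|\Psi\|_{L^2(B_r)} + \|\nabla_{A\otimes B}\Psi\|_{L^2(B_r)}\bigr)$ and absorb similarly, using \autoref{Prop_APrioriBounds}. The main obstacle is bookkeeping the Pohozaev identity for the $2$-form term $\tan(\alpha)^{-2}|\mu(\Psi)|^2$ correctly — making sure the radial component $|i(\del_r)\mu(\Psi)|^2$ appears with the right coefficient $2$ and that the interior term combines cleanly with the spinor interior term — together with tracking the curvature coupling $F_A\Psi \leftrightarrow \mu(\Psi)$ through the integration by parts without generating uncontrolled terms; the metric non-flatness errors are routine by comparison.
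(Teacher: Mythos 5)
Your coarea-plus-Rellich--Pohozaev strategy is a genuinely different route from the paper's. The paper proves \eqref{Eq_HDerivative} sphere by sphere: it restricts $\Psi$ to $\del B_r(x)$, uses B\"ar's relations between $\slD$, $\nabla$ and the intrinsic surface operators $\tilde\slD$, $\tilde\nabla$, and applies the Weitzenb\"ock formula \emph{on the surface} $\del B_r(x)$; the twisting curvature then enters only through the surface integrand $-\<F_A|_{\del B_r(x)},\mu(\Psi)\>$, which is rewritten purely algebraically via $F_A=\tan(\alpha)^{-2}\mu(\Psi)$ as $-\tan(\alpha)^{-2}\bigl(|\mu(\Psi)|^2-|i(\del_r)\mu(\Psi)|^2\bigr)$ and combines with the coarea term to leave exactly $\tan(\alpha)^{-2}|i(\del_r)\mu(\Psi)|^2$. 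No interior coupling terms between $F_A$ and $(\Psi,\nabla_{A\otimes B}\Psi)$ ever arise. In your ball-based version such coupling terms do arise, and they are precisely where the proposal has a genuine gap rather than routine bookkeeping.

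Two concrete points. First, your fallback for residual cross terms fails for the $F_A$--part: the only available bound is $\|F_A\|_{L^2(B_r)}=\tan(\alpha)^{-2}\|\mu(\Psi)\|_{L^2(B_r)}=O\bigl(r^{1/2}\tan(\alpha)^{-1}\bigr)$, so Cauchy--Schwarz together with \autoref{Prop_APrioriBounds} gives for the commutator term only $\tfrac{2}{r}\int_{B_r(x)}|F_A(\,\cdot\,,r\del_r)||\Psi||\nabla_{A\otimes B}\Psi| \lesssim \tan(\alpha)^{-1}\|\mu(\Psi)\|_{L^2(B_r)}\|\nabla_{A\otimes B}\Psi\|_{L^2(B_r)}\lesssim H(r)=\scN(r)h(r)/r$, which exceeds the asserted error $O\bigl((1+\scN(r))h(r)\bigr)$ by a factor $r^{-1}$ (the $F_B$, scalar-curvature and metric-error terms are indeed harmless, using \eqref{Eq_ErrorBound}); an $O(H(r))$ error in $H'(r)$ proves neither \eqref{Eq_HDerivative} nor the $O(r)$ prefactor needed for \eqref{Eq_NDerivativeLowerBound}. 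Second, the ``Pohozaev identity for the $2$--form $\mu(\Psi)$'' is not available as stated: $\mu(\Psi)=\tan(\alpha)^2F_A$ is closed but not co-closed, $d^*F_A$ being a bilinear current in $(\Psi,\nabla_{A\otimes B}\Psi)$, so testing against $r\del_r$ generates further interior coupling terms of exactly the dangerous size $rH(r)$; moreover the interior coefficient you claim, $+\tfrac12\tan(\alpha)^{-2}\int_{B_r(x)}|\mu(\Psi)|^2$, cannot come from the standalone $2$--form identity, whose natural interior coefficient in dimension three is negative (curvature energy scales with weight $n-4$), so the claim that ``the interior terms assemble into exactly $\tfrac12 H(r)$'' is unsubstantiated. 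What is actually needed is to show that the spinor-side coupling terms --- $\<F_A\Psi,\nabla^{A\otimes B}_{r\del_r}\Psi\>$, which by the curvature equation equals $\tfrac14\tan(\alpha)^{-2}$ times the radial derivative of $|\mu(\Psi)|^2$ and can be integrated by parts exactly, and the commutator term $\sum_j\<\nabla^{A\otimes B}_{e_j}\Psi,F_A(e_j,r\del_r)\Psi\>$ --- cancel against the $\<i(r\del_r)F_A,d^*F_A\>$ contribution of the form identity, i.e.\ a stress--energy--tensor computation for the coupled system with all coefficients tracked. That cancellation is the heart of the proof on your route, and it is exactly the step you have deferred.
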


\begin{proof}
  The punctured ball $\dot B_{r_0}(x) := B_{r_0}(x)\setminus \set{x}$ is foliated by the surfaces $\del B_r(x)$ with normal vector field $\del_r$.
  According to \cite{Baer2005}*{Section 3} the restriction of the spin bundle on $\dot B_{r_0}(x)$ to $\del B_r(x)$ can be identified with the spin bundle on $\del B_r(x)$ and if $\tilde\gamma$, $\tilde\nabla$ and $\tilde\slD$ denote the Clifford multiplication, spin connection and Dirac operator on $\del B_r(x)$ respectively, then for $v \in T\del B_r(x)$:
  \begin{align*}
    \gamma(v)
    &= -\gamma(\del_r)\tilde\gamma(v), \\
    \nabla_v
    &= \tilde\nabla_v + \frac{e^{O(r^2)}}{2r}\tilde\gamma(v) \qand \\
    \slD
    &= \gamma(\del_r) (\nabla_r + \frac{e^{O(r^2)}}{r} - \tilde\slD).
  \end{align*}
  (If the metric on $B_{r_0}(x)$ is flat, then the mean curvature of $\del B_r(x)$ is $-\frac{1}{r}$.
  In general, there is a correction term; hence, the term $e^{O(r^2)}$.)
  In particular, $\slD \Psi = 0$ is equivalent to
  \begin{equation*}
    \tilde\slD \Psi = \nabla_r\Psi + \frac{e^{O(r^2)}}{r}\Psi.
  \end{equation*}

  For $\Psi$ a harmonic spinor on $B_r(x)$ we compute:
  \begin{align*}
    \int_{\del B_r(x)} |\nabla\Psi|^2 - |\nabla_r\Psi|^2 
    &= \int_{\del B_r(x)} |\tilde\nabla\Psi + \frac{e^{O(r^2)}}{2r} \tilde\gamma(\cdot)\Psi|^2 \\
    &= \int_{\del B_r(x)} |\tilde\nabla\Psi|^2 - \frac{e^{O(r^2)}}{r} \langle\tilde\slD\Psi,\Psi\rangle + \frac{e^{O(r^2)}}{2r^2} |\Psi|^2 \\
    &= \int_{\del B_r(x)} |\tilde\nabla\Psi|^2 - \frac{e^{O(r^2)}}{r}\<\nabla_r\Psi,\Psi\> - \frac{e^{O(r^2)}}{2r^2} |\Psi|^2.
  \end{align*}
  Using the Weitzenböck formula~\eqref{Eq_Weitzenbock} the first term can be written as
  \begin{align*}
    \int_{\del B_r(x)} |\tilde\nabla\Psi|^2
    &= \int_{\del B_r(x)} \langle\tilde\nabla^*\tilde\nabla\Psi,\Psi\rangle \\
    &= \int_{\del B_r(x)} |\tilde\slD \Psi|^2 - \frac{e^{O(r^2)}}{2r^2} |\Psi|^2 \\
    &= \int_{\del B_r(x)} |\nabla_r\Psi|^2 + \frac{2e^{O(r^2)}}{r} \<\nabla_r\Psi,\Psi\> + \frac{e^{O(r^2)}}{2r^2}|\Psi|^2.
  \end{align*}
  This combined with \eqref{Eq_HAsBoundaryIntegral} and \eqref{Eq_ErrorBound} proves the asserted identity if $A$ and $B$ are product connections.

  If $A$ and $B$ are not the product connection, the computation is identical up to changes in notation \emph{and} in the Weitzenböck formula two additional terms appear.
  The first is
  \begin{equation*}
    -\int_{\del B_r(x)} \<F_A|_{\del B_r(x)},\mu(\Psi)\>
  \end{equation*}
  and the second can be estimated by $O(1)h(r)$.
  If $(e_1,e_2)$ is a local positive orthonormal frame of $T\del B_r(x)$, then the integrand in the above expression is
  \begin{equation*}
    \frac12\<F_A(e_1,e_2)[\tilde\gamma(e_1),\tilde\gamma(e_2)],\mu(\Psi)\>
    = \<F_A(e_1,e_2)\gamma(\del_r),\mu(\Psi)\>.
  \end{equation*}
  To better understand this term, observe that if $\{\cdot,\cdot\}$ denotes the anti-commutator, then
  \begin{equation*}
    \mu(\Psi) = \sum_{m} \frac12\{\mu(\Psi),\gamma_m\}\gamma_m
  \end{equation*}
  and $\<\gamma_m,\gamma_n\> = 2\delta_{mn}$.
  Using $F_A = \tan(\alpha)^{-2}\mu(\Psi)$ we can write the integrand as $\tan(\alpha)^{-2}$ times
  \begin{equation*}
    \frac12 |\{\mu(\Psi),\gamma(\del_r)\}|^2
    = |\mu(\Psi)|^2 - |i(\del_r)\mu(\Psi)|^2.
  \end{equation*}
  This proves \eqref{Eq_HDerivative} in general.
\end{proof}

\begin{proof}[Proof of \autoref{Prop_NDerivativeLowerBound}]
  Plug \eqref{Eq_hDerivative2} and \eqref{Eq_HDerivative} into \eqref{Eq_NDerivative} and use \eqref{Eq_HAsBoundaryIntegral} and \eqref{Eq_ErrorBound} to obtain
  \begin{align*}
    \scN(r)'
    &= \frac{2r}{h(r)} \int_{\del B_r(x)} |\nabla^{A\otimes B}_r \Psi|^2 + \tan(\alpha)^{-2} |i(\del_r)\mu(\Psi)|^2 \\
    &\quad - \frac{2r}{h(r)^2}\(\int_{\del B_r(x)} \<\nabla^{A\otimes B}_r\Psi,\Psi\>\)^2 \\
    &\quad + O(r)\(1+\scN(r)\).
  \end{align*}
  By Cauchy--Schwarz the sum of the first and the third term is positive.
  This completes the proof.
\end{proof}

\subsection{\texorpdfstring{$\scN$}{N} controls the growth of \texorpdfstring{$h$}{h}}
\label{Sec_NControlsGrotwthOfh}

\begin{prop}
  \label{Prop_hIntegralFormula}
  If $0 < s < r$, then
  \begin{equation}
    \label{Eq_hIntegralFormula}
    h(r) = e^{O(r^2)}\(r/s\)^2\exp\(2\int_s^r \scN(t)/t \, \rd t\) h(s).
  \end{equation}
\end{prop}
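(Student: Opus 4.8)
The plan is to integrate the logarithmic derivative of $h$, taking as the only input the differential identity \eqref{Eq_hDerivative2} from \autoref{Prop_hDerivative}. First I would observe that throughout the range of radii under consideration $h(r) = \int_{\del B_r(x)} |\Psi|^2$ is strictly positive: this is implicit in $\scN$ being defined, and in any case \eqref{Eq_hDerivative2} (together with \autoref{Step_h'>0} in the proof of \autoref{Prop_hDerivative}) shows $h' \geq 0$, so once $h$ is positive it remains so. Hence $\log h$ is a well-defined, differentiable function of $r$.

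Next I would rewrite \eqref{Eq_hDerivative2} as
\begin{equation*}
  \del_r \log h(r) = \frac{h'(r)}{h(r)} = \frac{2}{r} + \frac{2\scN(r)}{r} + O(r),
\end{equation*}
where the term $O(r^2)/r = O(r)$ collects the metric-correction errors and is controlled, in the sense of the paper's conventions, by the geometry of $M$ alone. Integrating this identity over the interval $[s,r]$, the first summand contributes $2\log(r/s)$, the second contributes $2\int_s^r \scN(t)/t\,\rd t$, and the third contributes $\int_s^r O(t)\,\rd t = O(r^2 - s^2) = O(r^2)$, using $0 < s < r \leq r_0$. This yields
\begin{equation*}
  \log h(r) - \log h(s) = 2\log(r/s) + 2\int_s^r \scN(t)/t\,\rd t + O(r^2),
\end{equation*}
and exponentiating gives \eqref{Eq_hIntegralFormula}.

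There is no genuine obstacle in this argument; the two points that deserve a word of care are the positivity of $h$ needed in order to pass to logarithms, and the observation that the $O(r^2)$ error in \eqref{Eq_hDerivative2} integrates (after division by $r$ and integration in $t$) to a quantity that is still of size $O(r^2)$ rather than something larger, which is exactly what is needed for the claimed form of the prefactor $e^{O(r^2)}$.
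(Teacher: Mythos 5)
Your argument is exactly the paper's: rewrite \eqref{Eq_hDerivative2} as $(\log h(r))' = (2+2\scN(r))/r + O(r)$, integrate from $s$ to $r$, and exponentiate, with your added remarks on the positivity of $h$ and on the error term integrating to $O(r^2)$ being correct and harmless elaborations. No issues.
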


\begin{proof}
  \autoref{Eq_hDerivative2} can be written as
  \begin{equation*}
    (\log h(r))' = (2+2\scN(r))/r + O(r).
  \end{equation*}
  Integrating this yields \eqref{Eq_hIntegralFormula}.
\end{proof}

\begin{cor}
  \label{Cor_hGrowth}
  If $0 < s < r$, then
  \begin{equation*}
    h(s) \lesssim (s/r)^2 h(r).
  \end{equation*}
  In particular, if $h(s)$ is positive, then so is $h(r)$;
  moreover,
  $|\Psi|^2(x) \lesssim h(r)/r^2$.
\end{cor}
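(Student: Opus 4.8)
The plan is to derive everything from the integral formula in \autoref{Prop_hIntegralFormula} together with the almost-monotonicity of $\scN$ established in \autoref{Prop_NMaxGrowth}. The key point is that $\scN$ is nonnegative, so the exponential factor $\exp\(2\int_s^r \scN(t)/t\,\rd t\)$ in \eqref{Eq_hIntegralFormula} is at least $1$; combined with $e^{O(r^2)} = 1 + O(r^2) \gtrsim 1$ for $r \leq r_0$, this immediately yields $h(r) \gtrsim (r/s)^2 h(s)$, which rearranges to $h(s) \lesssim (s/r)^2 h(r)$. This is the whole first assertion, and it requires only that we have chosen $r_0$ small enough that the universal constant hidden in $e^{O(r^2)}$ stays bounded away from zero on $(0,r_0]$.

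The second assertion, positivity propagation, is then a formal consequence: if $h(s) > 0$ and $s < r$, the inequality $h(r) \gtrsim (r/s)^2 h(s) > 0$ forces $h(r) > 0$. (Alternatively one can read it directly off \eqref{Eq_hIntegralFormula}, since every factor on the right-hand side is strictly positive once $h(s)$ is.)

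For the pointwise bound $|\Psi|^2(x) \lesssim h(r)/r^2$, the idea is to let $s \to 0$ in the inequality $h(s) \lesssim (s/r)^2 h(r)$, i.e. $h(s)/s^2 \lesssim h(r)/r^2$, and to identify $\lim_{s\to 0} h(s)/s^2$ with a multiple of $|\Psi|^2(x)$. Writing $h(s) = \int_{\del B_s(x)} |\Psi|^2$ and using that $\del B_s(x)$ has area $4\pi s^2 (1 + O(s^2))$ while $|\Psi|^2$ is continuous at $x$ (it is smooth, being built from a smooth section $\Psi$ wherever $\Psi$ is defined — and near a fixed base-point the section is genuinely smooth), we get $h(s)/s^2 \to 4\pi |\Psi|^2(x)$ as $s \to 0$. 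Feeding this into the displayed inequality and absorbing the constant $4\pi$ into $\lesssim$ gives the claim.

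I do not expect a serious obstacle here — the corollary is essentially a restatement of \autoref{Prop_hIntegralFormula} using $\scN \geq 0$. The one point that needs a word of care is the passage to the limit $s \to 0$ in the last part: one must know a priori that $h(s)/s^2$ has a limit, which is clear from the asymptotics of the sphere area together with continuity of $|\Psi|^2$, but it is worth recording that this uses that $\Psi$ (hence $|\Psi|^2$) is smooth in a neighbourhood of the fixed point $x$, so that evaluating $|\Psi|(x)$ makes sense. For solutions of \eqref{Eq_nSW} with $\alpha > 0$ this is automatic; the degenerate-limit case $\alpha = 0$ is handled separately later in the paper, on $M \setminus Z$, where the same smoothness holds.
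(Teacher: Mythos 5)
Your argument is exactly the one the paper intends: the corollary is stated without proof precisely because it follows immediately from \autoref{Prop_hIntegralFormula} once one notes $\scN \geq 0$ (so the exponential factor is at least $1$), with the pointwise bound obtained by dividing by the sphere area and letting $s \to 0$, using continuity of $|\Psi|^2$ at $x$. Your write-up, including the care about the $s \to 0$ limit, is correct and matches the paper's (implicit) reasoning.
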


\begin{prop}
  \label{Prop_NControlsGrowthOfh}
  If $0 < s < r$, then
  \begin{equation*}
    e^{O(r^2)}(s/r)^{e^{O(r^2)}(2+2\scN(r))}h(r)
    \leq h(s)
    \leq e^{O(r^2)}(s/r)^{e^{O(r^2)}(2+2\scN(s))}h(r).
  \end{equation*}
\end{prop}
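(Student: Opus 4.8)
The plan is to deduce the two-sided bound from the integral formula \eqref{Eq_hIntegralFormula} together with the almost-monotonicity of $\scN$ established in \autoref{Prop_NMaxGrowth}. Rewriting \eqref{Eq_hIntegralFormula} as
\begin{equation*}
  \frac{h(s)}{h(r)} = e^{O(r^2)}\(\frac{s}{r}\)^2 \exp\(-2\int_s^r \frac{\scN(t)}{t}\,\rd t\),
\end{equation*}
the whole task reduces to sandwiching the integral $\int_s^r \scN(t)/t\,\rd t$ between $e^{O(r^2)}\scN(s)\log(r/s)$ and $e^{O(r^2)}\scN(r)\log(r/s)$, up to additive errors of size $O(r^2)$ that can be absorbed into the prefactor $e^{O(r^2)}$.

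For the upper bound on $h(s)$ (i.e.\ the lower bound on the integral), I would use that for $t \in [s,r]$ one has $\scN(t) \geq e^{-O(r^2)}\scN(s) - O(r^2)$; this comes from \eqref{Eq_NMaxGrowth} applied with the pair $(s,t)$ in place of $(s,r)$ — note $r^2 - t^2 \leq r^2 - s^2 \le r^2$, so the exponential and additive errors are uniformly $O(r^2)$ over the interval. Integrating $\scN(t)/t \geq (e^{-O(r^2)}\scN(s) - O(r^2))/t$ from $s$ to $r$ gives $\int_s^r \scN(t)/t\,\rd t \geq e^{-O(r^2)}\scN(s)\log(r/s) - O(r^2)\log(r/s)$. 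Since $(s/r)^{O(r^2)} = e^{O(r^2)\log(r/s)}$, and $\log(r/s) \ge 0$, the stray factor $e^{O(r^2)\log(r/s)}$ must be controlled: here one uses that $\scN(s) \ge 0$, so $e^{-O(r^2)}\scN(s) \ge \scN(s) - O(r^2)\scN(s)$, and $r^2\scN(s)\log(r/s)$ — hmm, this is the delicate point. The cleanest route is instead to write $e^{\pm O(r^2)} = 1 + O(r^2)$ and keep the factor in the exponent of $(s/r)$, yielding exactly the form $(s/r)^{e^{O(r^2)}(2+2\scN(s))}$ stated; symmetrically, for the lower bound on $h(s)$ one uses $\scN(t) \le e^{O(r^2)}\scN(r) + O(r^2)$ for $t \in [s,r]$, again directly from \eqref{Eq_NMaxGrowth}, and integrates.

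Putting these together: from $2 + 2\scN(t)$ bounded below by $e^{-O(r^2)}(2+2\scN(s))$ and above by $e^{O(r^2)}(2+2\scN(r))$ pointwise on $[s,r]$ (absorbing the $O(r^2)$ additive terms into the exponential prefactors, which is legitimate since $\scN \ge 0$ makes $2 + 2\scN \ge 2$ bounded away from zero), and from $(\log h)'(t) = (2+2\scN(t))/t + O(t)$, integrating over $[s,r]$ gives
\begin{equation*}
  e^{-O(r^2)}(2+2\scN(s))\log(r/s) + O(r^2) \le \log\frac{h(r)}{h(s)} \le e^{O(r^2)}(2+2\scN(r))\log(r/s) + O(r^2),
\end{equation*}
and exponentiating produces precisely the claimed inequalities. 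The main obstacle, and the only place requiring care, is bookkeeping the interaction between the multiplicative $e^{O(r^2)}$ distortion of $\scN$ and the logarithmic factor $\log(r/s)$ — one must verify that the additive $O(r^2)$ errors in the Harnack-type estimate \eqref{Eq_NMaxGrowth} genuinely land in the $e^{O(r^2)}$ prefactor rather than the exponent, which works because $\scN(s), \scN(r) \geq 0$ so the quantities $2+2\scN(s)$ and $2+2\scN(r)$ are bounded below by $2$.
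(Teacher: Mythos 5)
Your proposal is correct and follows essentially the same route as the paper's proof: both rewrite $h(s)/h(r)$ via \eqref{Eq_hIntegralFormula} (equivalently, by integrating \eqref{Eq_hDerivative2}) and then sandwich $\int_s^r \scN(t)/t\,\rd t$ using the pointwise consequences of \autoref{Prop_NMaxGrowth}, namely $\scN(t)\le e^{O(r^2)}\scN(r)+O(r^2)$ and $\scN(t)\ge e^{-O(r^2)}\scN(s)-O(r^2)$ on $[s,r]$. Your remark that the additive $O(r^2)$ errors are absorbed into the $e^{O(r^2)}$ factor in the exponent because $2+2\scN\ge 2$ is exactly the bookkeeping left implicit in the paper's two displayed integral estimates.
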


\begin{proof}
  Combine 
  \begin{equation*}
      h(s) = e^{O(r^2)}\(s/r\)^2 \exp\(-2\int_s^r \scN(t)/t \, \rd t\) h(r)
  \end{equation*}
  with
  \begin{align*}
    \int_s^r \scN(t)/t \, \rd t
    &\leq \int_s^r \frac{1}{t}\(e^{O(r^2-t^2)}\scN(r)+O(r^2-t^2)\) \, \rd t \\
    &\leq -\(e^{O(r^2)}\scN(r)+O(r^2)\)\log(s/r)
  \end{align*} and
  \begin{equation*}
    -\(e^{O(r^2)}\scN(s)+O(r^2)\)\log(s/r) \leq \int_s^r \scN(t)/t \, \rd t.
  \end{equation*}
  The last two inequalities are consequences of \autoref{Prop_NMaxGrowth}.
\end{proof}

\subsection{\texorpdfstring{$|\Psi|(x)$}{|Psi|(x)} controls \texorpdfstring{$\scN$}{N}}

\begin{prop}
  \label{Prop_PsiXControlsN}
  If $0 < \omega \ll 1$ and 
  \begin{equation*}
    s \lesssim_\omega \min\{1,|\Psi|^{1/\omega}(x)\}, 
  \end{equation*}
  then $\scN(s) \lesssim \omega$.
\end{prop}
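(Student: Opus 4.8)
The plan is to combine two facts: that $\scN$ controls the growth of $h$ (Proposition~\ref{Prop_NControlsGrowthOfh} and Corollary~\ref{Cor_hGrowth}), and that $h(r)$ is itself uniformly controlled by the a priori bounds of Section~\ref{Sec_APrioriEstimates}. The heuristic is that $|\Psi|^2(x)$ equals $\lim_{r\to 0} h(r)/r^2$ (up to metric corrections), and if $\scN$ stayed large down to scale $s$, then $h$ would decay faster than $r^2$, forcing $|\Psi|(x)$ to be small. Inverting this implication gives the claim.

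First I would establish an upper bound $h(r) = O(r^2)$ for all $r \le r_0$. Indeed, applying \autoref{Prop_APrioriBounds} we have $H(r) = \int_{B_r(x)}|\nabla_{A\otimes B}\Psi|^2 + \tan(\alpha)^{-2}|\mu(\Psi)|^2 = O(r)$, and integrating \eqref{Eq_hDerivative2} from $0$ to $r$ (using that $h$ is increasing and $\scN \ge 0$), or more directly using $h(r) \le \int_M |\Psi|^2 \cdot (\text{bounded})$ together with the $L^\infty$--bound on $\Psi$ and the coarea formula, yields $h(r) \lesssim r^2\|\Psi\|_{L^\infty}^2 = O(r^2)$. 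Hence there is a universal constant $C$ with $h(r) \le Cr^2$ for all $r \le r_0$.

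Next, fix $r = r_0/2$, say, and suppose toward the desired conclusion that $\scN(s) > K\omega$ for some large universal $K$ to be chosen. By \autoref{Prop_NControlsGrowthOfh},
\begin{equation*}
  h(s) \ge e^{O(r^2)}(s/r)^{e^{O(r^2)}(2+2\scN(r))} h(r) \ge c\, (s/r)^{2 + 2\scN(r) + O(1)} h(r).
\end{equation*}
On the other hand, almost-monotonicity (\autoref{Prop_NMaxGrowth}) gives $\scN(r) \ge e^{-O(r^2)}\scN(s) - O(r^2) \gtrsim \scN(s)$, while Corollary~\ref{Cor_hGrowth} and the lower bound relating $h$ and $|\Psi|(x)$ give $h(s) \gtrsim s^2 |\Psi|^2(x)$ — wait, that goes the wrong way; instead I use the \emph{lower} bound from \autoref{Prop_NControlsGrowthOfh} with the roles arranged so that a large value of $\scN$ at a fixed scale forces rapid decay of $h$ as $s \to 0$: combining $h(s) \le Cs^2$ with the inequality $h(s) \ge c(s/r)^{2+2\scN(s)+O(r^2)} h(r)$ gives
\begin{equation*}
  c\,(s/r)^{2 + 2\scN(s) + O(r^2)} h(r) \le C s^2,
\end{equation*}
i.e. $(s/r)^{2\scN(s) + O(r^2)} \lesssim s^2 / h(r)$. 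Taking logarithms and using $\scN(s) > K\omega$ together with $s < r$ (so $\log(r/s) > 0$), this rearranges to a bound of the form $h(r) \lesssim r^2 (s/r)^{2K\omega - O(r^2)}$. But $h(r)$ at the fixed scale $r = r_0/2$ is bounded below in terms of $|\Psi|(x)$: by Corollary~\ref{Cor_hGrowth}, $h(r) \gtrsim r^2\, h(t)/t^2$ for any $t \le r$, and letting $t \to 0$ gives $h(r) \gtrsim r^2 |\Psi|^2(x)$. Combining, $|\Psi|^2(x) \lesssim (s/r)^{2K\omega - O(r^2)}$, so $|\Psi|(x) \lesssim_\omega s^{K\omega - O(r_0^2)}$. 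Choosing $K$ large enough (and $r_0$ small) that $K\omega - O(r_0^2) \ge 1/\omega$ — which is possible for $\omega \ll 1$ since then $K\omega \ge 2/\omega$ say, absorbing the $O(r_0^2)$ correction — we get $|\Psi|(x) \lesssim_\omega s^{1/\omega}$, contradicting the hypothesis $s \lesssim_\omega \min\{1,|\Psi|^{1/\omega}(x)\}$ once the implicit constant is suitably chosen. This contradiction shows $\scN(s) \le K\omega$, i.e. $\scN(s) \lesssim \omega$.

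The main obstacle is bookkeeping the interplay of the multiplicative constants: the bound from \autoref{Prop_NControlsGrowthOfh} carries factors $e^{O(r^2)}$ in \emph{both} the prefactor and the exponent, and \autoref{Prop_NMaxGrowth} introduces additive $O(r^2)$ corrections to $\scN$; one must check that for $r \le r_0$ small these corrections are dominated by the linear-in-$\omega$ terms, so that the exponent $2\scN(s) + O(r^2)$ is genuinely bounded below by a constant multiple of $\scN(s)$ when $\scN(s) \gtrsim \omega$. The other point requiring care is that the argument is vacuous unless $h(s) > 0$; but if $h(s) = 0$ for some $s$ then $\Psi \equiv 0$ near $x$ by unique continuation (or directly $\scN$ is undefined and one interprets $\scN(s) = 0$), so the inequality holds trivially. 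Everything else is a direct chaining of the inequalities already established in this section together with the a priori bound $\|\Psi\|_{L^\infty} = O(1)$.
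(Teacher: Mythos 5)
Your overall strategy (combining $h(t)\lesssim t^2$ from \autoref{Prop_APrioriBounds}, $|\Psi|^2(x)\lesssim h(t)/t^2$ from \autoref{Cor_hGrowth}, and the growth control of \autoref{Prop_NControlsGrowthOfh}) is the right one, but two steps fail. First, the inequality you actually use, $h(s)\ge c\,(s/r)^{2+2\scN(s)+O(r^2)}h(r)$, is not what \autoref{Prop_NControlsGrowthOfh} gives: its \emph{lower} bound for $h(s)$ carries $\scN(r)$, the frequency at the \emph{larger} radius, in the exponent, and since $\scN$ is almost monotone increasing (\autoref{Prop_NMaxGrowth}) you cannot trade $\scN(r)$ for $\scN(s)$ in a lower bound. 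Indeed, by \autoref{Prop_hIntegralFormula}, $h(s)/h(r)=e^{O(r^2)}(s/r)^2\exp\bigl(-2\int_s^r\scN(t)/t\,\rd t\bigr)$, and on intermediate scales $\scN(t)$ may be much larger than $\scN(s)$, making $h(s)/h(r)$ far smaller than $(s/r)^{2+2\scN(s)}$; so the inequality you wrote is false in general. The pairing you discarded as ``going the wrong way'' is in fact the correct one: use $h(s)\gtrsim s^2|\Psi|^2(x)$ at the small scale, $h(r)\lesssim r^2$ at the large scale, and the genuine \emph{upper} bound $h(s)\le e^{O(r^2)}(s/r)^{e^{O(r^2)}(2+2\scN(s))}h(r)$, whose exponent does involve $\scN(s)$. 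This yields $(r/s)^{e^{O(r^2)}2\scN(s)+O(r^2)}\lesssim|\Psi|^{-2}(x)$, i.e.\ $\scN(s)\lesssim \log\bigl(c|\Psi|^{-1}(x)\bigr)/\log(r/s)+O(r^2)$, directly and without any contradiction argument.

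Second, fixing $r=r_0/2$ ruins the constant bookkeeping you flagged as the main obstacle: the additive errors from \autoref{Prop_NMaxGrowth} and \autoref{Prop_NControlsGrowthOfh} are then $O(r_0^2)$, a constant independent of $\omega$, so at best you get $\scN(s)\lesssim\omega+O(r_0^2)$, which is not $\lesssim\omega$ once $\omega\ll r_0^2$. Your repair, choosing $K$ with $K\omega\ge 2/\omega$, makes $K\sim\omega^{-2}$ depend on $\omega$, and then excluding $\scN(s)>K\omega$ only proves $\scN(s)\le 2/\omega$, not the claimed $\scN(s)\lesssim\omega$. The cure is to let the comparison radius shrink with $\omega$ in the inequality above: if $c|\Psi|^{-1}(x)\le 1$ take $r=2\omega$, $s=\omega$, so the logarithmic term is non-positive and $\scN(\omega)\lesssim\omega^2$; otherwise take $r=\omega$ and $s=\omega\bigl(c^{-1}|\Psi|(x)\bigr)^{1/\omega}$, so that $\log(r/s)=\tfrac1\omega\log\bigl(c|\Psi|^{-1}(x)\bigr)$ and the quotient equals $\omega$, while every $O(r^2)$ error is $O(\omega^2)\lesssim\omega$. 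Having the bound at these particular radii, you then extend it to all $s\lesssim_\omega\min\{1,|\Psi|^{1/\omega}(x)\}$ by almost-monotonicity (\autoref{Prop_NMaxGrowth}).
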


\begin{proof}
  By \autoref{Prop_APrioriBounds}, $h(r) \lesssim r^2$ and, by \autoref{Cor_hGrowth}, $h_x(s)\gtrsim s^2 |\Psi|^2(x)$.
  From \autoref{Prop_NControlsGrowthOfh} it follows that for $s < r$
  \begin{align*}
    (r/s)^{e^{O(r^2)}2\scN(s)+O(r^2)} \leq c^2|\Psi|^{-2}(x);
  \end{align*}
  hence,
  \begin{equation*}
    \scN(s) \lesssim \frac{\log(c|\Psi|^{-1}(x))}{\log(r/s)} + O(r^2).
  \end{equation*}
  If $\sigma := c|\Psi|^{-1}(x) \leq 1$, then the first term is non-positive and setting $r=2\omega$ and $s = \omega$ yields the asserted bound.
  If $\sigma > 1$, set $r = \omega$ and $s = \omega c^{-1/\omega} |\Psi|^{1/\omega}(x) = \omega \sigma^{-1/\omega}$ to obtain
  \begin{equation*}
    \scN(s) 
    \lesssim \omega + O(r^2)
    \lesssim \omega.
    \qedhere
  \end{equation*}
\end{proof}

\subsection{Dependence of \texorpdfstring{$\scN$}{N} on the base-point}

\begin{prop}
  \label{Prop_hDependenceOnBasePoint}
  For $x,y \in M$ and $r > 0$
  \begin{equation*}
    h_x(r) \lesssim \frac{2r+d(x,y)}{r} h_y\bigl(2r+d(x,y)\bigr).
  \end{equation*}
\end{prop}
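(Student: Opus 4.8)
The plan is to bound $h_x(r)$ from above by a solid-ball integral of $|\Psi|^2$ centred at $x$, to enlarge that ball to one centred at $y$ by the triangle inequality, and then to invoke the reverse bound \eqref{Eq_ErrorBound} at the point $y$.

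First I would record that $h_x$ is almost monotone increasing on the relevant range of radii: applying \autoref{Cor_hGrowth} with its ``$s$'' equal to $r$ and its ``$r$'' equal to $t$ shows that, for $r \leq t \leq 2r$, one has $h_x(r) \lesssim (r/t)^2 h_x(t) \leq h_x(t)$ with a uniform constant. Integrating this over $t \in [r,2r]$ gives $r\,h_x(r) \lesssim \int_r^{2r} h_x(t)\,\rd t$. Since the Riemannian distance function $d(x,\cdot)$ has unit gradient on $B_{r_0}(x)\setminus\set{x}$, the coarea formula identifies $\int_r^{2r} h_x(t)\,\rd t$ with $\int_{B_{2r}(x)\setminus B_r(x)} |\Psi|^2$, which is at most $\int_{B_{2r}(x)} |\Psi|^2$. (The factor $2$ here is chosen precisely so that the enlarged ball below has radius exactly $2r + d(x,y)$.)

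Next, by the triangle inequality $B_{2r}(x) \subseteq B_{2r+d(x,y)}(y)$, so the last integral is at most $\int_{B_{2r+d(x,y)}(y)} |\Psi|^2$; applying \eqref{Eq_ErrorBound} at the base-point $y$ with radius $2r+d(x,y)$ — which we may assume is $\leq r_0$, in keeping with our standing convention on radii — bounds this by $O\bigl((2r+d(x,y))\,h_y(2r+d(x,y))\bigr)$. Stringing together the chain of inequalities and dividing by $r$ yields the claimed estimate.

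There is no serious obstacle here: the two points requiring a little care are the application of the coarea formula for the Riemannian distance function (which is exact, with no metric correction term, inside the injectivity radius, in contrast to the computation of $h'$ in \autoref{Prop_hDerivative}) and the verification that \autoref{Cor_hGrowth} really does deliver the almost-monotonicity of $h_x$ with a constant independent of $r$ and of the base-point.
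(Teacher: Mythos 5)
Your argument is exactly the paper's proof, just written out in more detail: the paper likewise uses \autoref{Cor_hGrowth} to get $r\,h_x(r) \lesssim \int_{B_{2r}(x)}|\Psi|^2$, enlarges to $B_{2r+d(x,y)}(y)$ by the triangle inequality, and closes with \eqref{Eq_ErrorBound} at the base-point $y$. The extra care you take with the coarea formula and the uniformity of the constant in \autoref{Cor_hGrowth} is correct and fills in the steps the paper leaves implicit.
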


\begin{proof}
  By \autoref{Cor_hGrowth} and \eqref{Eq_ErrorBound}
  \begin{equation*}
    rh_x(r)
    \lesssim  \int_{B_{2r}(x)} |\Psi|^2
    \leq \int_{B_{2r+d(x,y)}(y)} |\Psi|^2 
    \lesssim \bigl(2r+d(x,y)\bigr) h_y\bigl(2r+d(x,y)\bigr).
    \qedhere
  \end{equation*}
\end{proof}

\begin{prop}
  \label{Prop_FrequencyCannotJumpToMuch}
  Suppose $x \in M$ and $r > 0$ are such that $\scN_x(10r) \leq 1$.
  If $y \in B_{r}(x)$, then $\scN_y(5r) \lesssim \scN_x(10r)$.
\end{prop}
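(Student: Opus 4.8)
The plan is to compare $\scN_y(5r) = 5r H_y(5r)/h_y(5r)$ with $\scN_x(10r)$ by estimating the numerator and denominator separately. For the numerator, note that $B_{5r}(y) \subset B_{6r}(x) \subset B_{10r}(x)$ since $y \in B_r(x)$, so the monotone (in the sense of set inclusion) quantity $H_y(5r) \leq H_x(10r)$ directly; here we use that the integrand defining $H$ is pointwise non-negative. Hence $5r H_y(5r) \leq 5r H_x(10r) = \tfrac12 h_x(10r)\scN_x(10r)$. The work is then to bound $h_x(10r)$ from above by a multiple of $h_y(5r)$, i.e.\ to control the denominator from below.

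First I would use \autoref{Prop_hDependenceOnBasePoint} with the roles of the points exchanged: applying it with base-point $y$ and comparison point $x$, and using $d(x,y) < r$, gives $h_x(10r) \lesssim h_x(10r)$ — wait, rather one applies it the other way, $h_y(s) \lesssim \frac{2s+d(x,y)}{s} h_x(2s+d(x,y))$, which bounds $h_y$ above by $h_x$ and is the wrong direction. Instead I would obtain the lower bound on $h_y(5r)$ by combining \autoref{Cor_hGrowth} (which gives $h_y(t) \lesssim (t/5r)^2 h_y(5r)$ for $t < 5r$, hence $h_y(5r) \gtrsim (5r/t)^2 h_y(t)$ for any convenient small $t$) with the reverse comparison: pick a ball $B_t(y)$ with $t$ comparable to $r$ that still sits inside $B_{10r}(x)$ and carries a definite fraction of the $L^2$-mass of $\Psi$ on $B_{10r}(x)$. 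The cleanest route is the following. By \eqref{Eq_ErrorBound} and \autoref{Cor_hGrowth}, $h_x(10r) \lesssim r^{-1}\int_{B_{10r}(x)}|\Psi|^2$ and $\int_{B_{5r}(y)}|\Psi|^2 \lesssim r\, h_y(5r)$, so it suffices to show $\int_{B_{10r}(x)}|\Psi|^2 \lesssim \int_{B_{5r}(y)}|\Psi|^2$, i.e.\ a \emph{doubling-type} bound for the $L^2$-mass of $\Psi$ that is stable under the small shift of base-point. This doubling bound is exactly where the hypothesis $\scN_x(10r) \leq 1$ enters: \autoref{Prop_hIntegralFormula} together with \autoref{Prop_NMaxGrowth} shows that on the scale range $(0,10r]$ the function $h_x(t)$ grows no faster than $t^{2+2\scN_x(10r)+O(r^2)} \le t^{4+O(r^2)}$ (using almost-monotonicity of $\scN$ on $(0,10r]$), hence $\int_{B_{10r}(x)}|\Psi|^2 \sim \int_0^{10r} h_x(t)\,dt \lesssim \int_0^{3r} h_x(t)\,dt \sim \int_{B_{3r}(x)}|\Psi|^2 \leq \int_{B_{5r}(y)}|\Psi|^2$, the last inclusion because $B_{3r}(x)\subset B_{4r}(y) \subset B_{5r}(y)$.

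Chaining these inequalities yields
\[
  \scN_y(5r) = \frac{5r H_y(5r)}{h_y(5r)} \lesssim \frac{r H_x(10r)}{r^{-1}\int_{B_{10r}(x)}|\Psi|^2} \cdot \frac{\int_{B_{10r}(x)}|\Psi|^2}{r\, h_y(5r)} \lesssim \frac{r H_x(10r)}{\int_{B_{3r}(x)}|\Psi|^2}\,,
\]
and then relating $\int_{B_{3r}(x)}|\Psi|^2 \gtrsim r\,h_x(3r) \gtrsim r\,(3/10)^{4+O(r^2)} h_x(10r) \gtrsim r\, h_x(10r)$ via \autoref{Cor_hGrowth}/\autoref{Prop_NControlsGrowthOfh} and $\scN_x(10r)\le 1$, one gets $\scN_y(5r) \lesssim 10r H_x(10r)/h_x(10r) = \scN_x(10r)$, as desired. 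The main obstacle is the bookkeeping in the doubling estimate: one must verify that all the scale comparisons stay within $(0,10r]$ so that almost-monotonicity of $\scN_x$ and the growth bounds of \autoref{Prop_hIntegralFormula}, \autoref{Prop_NControlsGrowthOfh} apply with the uniform constant coming from $\scN_x(10r)\le 1$, and that the $O(r^2)$ and $e^{O(r^2)}$ factors (harmless since $r \le r_0 \ll 1$) are absorbed into the $\lesssim$. Once the doubling of the $L^2$-mass of $\Psi$ between the shifted balls is in hand, the rest is a short chain of the already-established monotonicity and growth lemmas.
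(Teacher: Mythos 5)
Your argument is correct, and structurally it is the same as the paper's: bound the numerator by $H_y(5r)\le H_x(10r)$ (ball inclusion, non-negative integrand) and then show $h_x(10r)\lesssim h_y(5r)$, with the hypothesis $\scN_x(10r)\le 1$ entering exactly where you put it, namely to prevent $h_x$ from decaying as the radius drops from $10r$ to a comparable smaller scale. The one substantive difference is self-inflicted: you misread \autoref{Prop_hDependenceOnBasePoint}. As stated, $h_x(r)\lesssim \frac{2r+d(x,y)}{r}\,h_y\bigl(2r+d(x,y)\bigr)$ bounds $h_x$ \emph{above} by $h_y$ at a slightly larger radius, which is precisely the direction you need; you just have to apply it at radius $r$ rather than $10r$, so that $2r+d(x,y)\le 3r<5r$, and then use \autoref{Prop_NControlsGrowthOfh} with $\scN_x(10r)\le 1$ to get $h_x(10r)\lesssim h_x(r)\lesssim h_y(3r)\lesssim h_y(5r)$ — this is the paper's two-line proof. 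Your ``doubling'' detour through $\int_{B_{10r}(x)}|\Psi|^2$ and $\int_{B_{3r}(x)}|\Psi|^2$ is in effect a re-derivation of that lemma (whose own proof is the same $L^2$-mass comparison via \eqref{Eq_ErrorBound} and \autoref{Cor_hGrowth}), so nothing is gained. One attribution slip: the inequality $r\,h_x(10r)\lesssim\int_{B_{10r}(x)}|\Psi|^2$ does \emph{not} follow from \eqref{Eq_ErrorBound} and \autoref{Cor_hGrowth} alone — both give bounds in the opposite direction — but requires the lower bound of \autoref{Prop_NControlsGrowthOfh} together with $\scN_x(10r)\le 1$ (and \autoref{Prop_NMaxGrowth} if you work at an intermediate radius). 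In your final chain this particular factor cancels, and at the place where such a lower bound is genuinely needed (comparing $\int_{B_{3r}(x)}|\Psi|^2$ with $r\,h_x(3r)$ and $h_x(3r)$ with $h_x(10r)$) you do invoke the frequency control, so the argument closes; just be aware that the citation as written is inaccurate.
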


\begin{proof}
  Since
  \begin{equation*}
    \scN_y(5r) = \frac{5r H_y(5r)}{h_y(5r)}
    \lesssim \scN_x(10r) \frac{h_x(10r)}{h_y(5r)},
  \end{equation*}
  it is key to control the latter quotient.
  Using \autoref{Prop_NControlsGrowthOfh} with $\scN_x(10r)\leq 1$ as well as \autoref{Prop_hDependenceOnBasePoint}
  \begin{equation*}
    h_x(10r) \lesssim h_x(r) \lesssim h_y(5r).
    \qedhere
  \end{equation*} 
\end{proof}


\section{\texorpdfstring{$\scN$}{N} controls \texorpdfstring{$\rho(x)$}{rho(x)}}
\label{Sec_NormPsiControlsRho}

In view of \autoref{Prop_PsiXControlsN} it suffices to prove the following in order to complete the proof of \autoref{Prop_NormPsiControlsRho}.

\begin{prop}
  \label{Prop_NControlsRho}
  There are $\omega, \rho_0 > 0$ such that for every solution $(A,\Psi,\alpha) \in \sA(\sL_0) \times \Gamma\(\Hom(E_0, \slS\otimes\sL)\) \times (0,\pi/2)$ of \eqref{Eq_nSW} 
  \begin{equation*}
    \text{if}\quad
    \scN(50r) \leq \omega,
    \quad\text{then}\quad
    \rho \ge \min\{r,\rho_0\}.
  \end{equation*}
\end{prop}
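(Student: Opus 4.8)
The plan is to argue by contradiction: suppose $\rho < r$, so that $\rho^{1/2}\|F_A\|_{L^2(B_\rho(x))} = 1$ (the supremum in the definition of $\rho$ is attained at $\rho$ by continuity, since $\rho < r_0$). Because $F_A = \tan(\alpha)^{-2}\mu(\Psi)$, this means
\begin{equation*}
  \rho\,\tan(\alpha)^{-4}\|\mu(\Psi)\|_{L^2(B_\rho(x))}^2 = 1,
\end{equation*}
so $\tan(\alpha)^{-2}\|\mu(\Psi)\|_{L^2(B_\rho(x))}^2 \geq \rho^{-1}\tan(\alpha)^{-2}$, and in particular $H_x(\rho) \geq \rho^{-1}\tan(\alpha)^{-2}$... but this by itself is not yet useful. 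The real point is to compare this curvature-generated lower bound on $H_x(\rho)$ with the upper bound on $H_x(\rho)$ that follows from the frequency hypothesis $\scN(50r)\le\omega$ together with the a priori bounds of \autoref{Prop_APrioriBounds}.

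First I would use almost-monotonicity (\autoref{Prop_NMaxGrowth}) to propagate the hypothesis $\scN_x(50r)\le\omega$ down to all scales $s \le 50r$, getting $\scN_x(s)\lesssim\omega$; hence by definition $H_x(s) = \scN_x(s)h_x(s)/s \lesssim \omega h_x(s)/s$ for all such $s$. Next, \autoref{Prop_NControlsGrowthOfh} (or \autoref{Cor_hGrowth}) gives a doubling-type control $h_x(s)\lesssim (s/r)^{2-O(\omega)}h_x(r)$, and from the global a priori bound $\|\nabla_A\Psi\|_{L^2(B_r(x))}=O(r^{1/2})$ together with the definition of $H$ and the estimate \eqref{Eq_ErrorBound}, one controls $h_x(r)$ in terms of $r$; combining these yields an estimate of the shape $H_x(s)\lesssim \omega\, s\, (\text{something bounded})$ for $s\le 50r$. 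Meanwhile, from $\rho < r$ and the forced equality $\rho^{1/2}\|F_A\|_{L^2(B_\rho(x))}=1$, one extracts $\tan(\alpha)^{-2}\|\mu(\Psi)\|_{L^2(B_\rho)}^2 = \tan(\alpha)^2/\rho$. To turn this into a lower bound on $H_x(\rho)$ that beats the upper bound, I would invoke \autoref{Prop_PsiW22Bound}: on $B_{\rho/2}(x)$ the renormalised $W^{2,2}_A$-norm of $\Psi$ is $O(1)$, which via Sobolev embedding controls $\|\Psi\|_{L^\infty(B_{\rho/2})}$ and hence $\|\mu(\Psi)\|_{L^\infty}$; bootstrapping, the curvature being large in $L^2$ on $B_\rho$ forces $|\Psi|$ — and therefore $h_x$ — to be correspondingly large, which through $\scN = rH/h$ and the already-established smallness of $H/h$ produces a contradiction once $\omega$ is chosen small enough (depending only on the geometry).

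The main obstacle I anticipate is the bookkeeping in the last step: closing the loop requires that the "critical" scale $\rho$, where curvature concentrates, be shown incompatible with the frequency staying below $\omega$ on the scale range $[\rho, 50r]$. Concretely, one must show that $\scN_x(50r)\le\omega$ forces $\rho\ge r$ and not merely $\rho \ge c(\omega) r$ for some constant degenerating as $\omega\to 0$; this is exactly why the statement allows an implicit constant in \autoref{Prop_NControlsRho} to be absorbed (note $\rho\ge r$ rather than $\rho\gtrsim r$ — presumably the relevant rescaling is handled by replacing $r$ by $c r$ at the outset). I would therefore be careful to track how the constant $\omega$ must be taken small relative to (i) the $O(\cdot)$ constants in \autoref{Prop_NMaxGrowth} and \autoref{Prop_NControlsGrowthOfh}, (ii) the Sobolev and Gagliardo–Nirenberg constants entering \autoref{Prop_PsiW22Bound}, and (iii) the a priori constants of \autoref{Prop_APrioriBounds}; since all of these depend only on the geometry of $M$, $E$, $B$, the final $\omega$ is legitimately universal. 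A secondary technical point is justifying that $\rho=\rho(x)$ is attained (so the equality $\rho^{1/2}\|F_A\|_{L^2(B_\rho)}=1$ holds), which follows from $r\mapsto \|F_A\|_{L^2(B_r(x))}$ being continuous and nondecreasing together with $\rho< r_0$.
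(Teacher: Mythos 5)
There is a genuine gap, and it sits exactly where you anticipate trouble: the contradiction mechanism you propose does not close. At the critical radius the identity $\rho^{1/2}\|F_A\|_{L^2(B_\rho(x))}=1$ translates (via $F_A=\tan(\alpha)^{-2}\mu(\Psi)$) into $\tan(\alpha)^{-2}\|\mu(\Psi)\|_{L^2(B_\rho)}^2=\tan(\alpha)^2/\rho$ (your first displayed inequality with $\tan(\alpha)^{-2}$ on the right is a sign slip, corrected later), so the curvature term contributes only $\tan(\alpha)^2/\rho$ to $H_x(\rho)$. This is perfectly compatible with $\scN$ being arbitrarily small as $\alpha\to 0$; the only information small frequency yields here is smallness of $\tau^2:=\tan(\alpha)^2/h_x(\rho)\le\scN_x(\rho)$. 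Your closing move --- ``curvature large in $L^2$ forces $|\Psi|$, hence $h_x$, to be large, contradicting smallness of $H/h$'' --- is unsupported (large $\|F_A\|_{L^2}$ only means $\mu(\Psi)$ is large relative to $\tan(\alpha)^2$, while $|\mu(\Psi)|\le|\Psi|^2=O(1)$ always) and logically backwards: making $h$ large while $H$ stays controlled makes $\scN$ smaller, which is consistent with the hypothesis, not contradictory. \autoref{Prop_PsiW22Bound} cannot rescue this: it gives $O(1)$ bounds on $\nabla^2_{A\otimes B}\Psi$, $|\Psi|$ and $|\mu(\Psi)|$, whereas what is needed is that $\|\mu(\Psi)\|_{L^2}$ be \emph{much smaller} than $\tan(\alpha)^2\rho^{-1/2}$. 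The paper's actual engine (\autoref{Prop_VerySmallCurvature}, resting on \autoref{Prop_InteriorCurvatureBounds}) is a genuine elliptic argument absent from your sketch: smallness of $\fe$ and $\tau$ gives a pointwise lower bound $|\Psi|\ge\lambda$ and an $L^\infty$ bound $\|\mu(\Psi)\|_{L^\infty}\lesssim\tau^{1/8}$, and then the identity for $\nabla^*\nabla\mu(\Psi)$, whose coercive term $\tau^{-2}|\Psi|^2|\mu(\Psi)|^2$ absorbs the cubic term, yields $\|\mu(\Psi)\|_{L^2}^2\lesssim\tau^4(\fe+\fe^{1/4})$, i.e.\ $r^{1/2}\|F_A\|_{L^2}\ll 1$ on an interior ball --- this is what collides with the definition of the critical radius.

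A second missing ingredient is geometric: even with the interior estimate in hand, one only controls $\|F_A\|_{L^2}$ on $B_{\rho/2}(x)$ (or $B_{(1-\delta)\rho}$), which does not contradict $\rho^{1/2}\|F_A\|_{L^2(B_\rho(x))}=1$, since the curvature could concentrate in the outer annulus. The paper resolves this by a point-selection argument (find $x'\in B_{2\rho(x)}(x)$ with $\rho(x')\le 2\min\{\rho(y):y\in B_{\rho(x')}(x')\}$), a verification via \autoref{Prop_FrequencyCannotJumpToMuch} and \autoref{Prop_hDependenceOnBasePoint} that the frequency stays small at the nearby base points $y$ (this is also where the scale $50r$ versus $\rho$ is spent), and a covering of $B_{\rho(x')}(x')$ by $O(1)$ balls $B_{\rho(y_i)/2}(y_i)$ on which the interior estimate applies; summing gives $\int_{B_{\rho(x')}(x')}|F_A|^2\lesssim\epsilon/\rho(x')$, contradicting the definition of $\rho(x')$. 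Your proposal works at the single base point $x$ and single scale $\rho$, so even after repairing the analytic core it would not reach a contradiction. (Minor point: propagating $\scN_x(50r)\le\omega$ to smaller scales via \autoref{Prop_NMaxGrowth} incurs an additive $O(r^2)$ error, harmless only because $r\le r_0\ll 1$; this should be said.)
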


\subsection{Interior \texorpdfstring{$L^2$}{L2}--bounds on the curvature}
\label{Sec_InteriorCurvatureBounds}

We first show that if the critical radius $\rho$ and the frequency $\scN(\rho)$ are very small, then so is the renormalised $L^2$--norm of $F_A$ on $B_{\rho/2}(x)$:

\begin{prop}
  \label{Prop_VerySmallCurvature}
  Let $(A,\Psi,\alpha) \in \sA(\sL_0) \times \Gamma\(\Hom(E_0, \slS\otimes\sL)\) \times (0,\pi/2)$ be a solution of \eqref{Eq_nSW}.
  For any $\epsilon >0$, if 
  \begin{equation*}
    \rho \ll_\epsilon 1 \qandq \scN(\rho) \ll_\epsilon 1,
  \end{equation*}
  then
  \begin{equation*}
    \rho \int_{B_{\rho/2}(x)} |F_A|^2 \leq \epsilon.
  \end{equation*}
\end{prop}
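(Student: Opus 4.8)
First I would note that the hypotheses force $\rho<r_0$: otherwise $\rho=r_0$ is bounded below by a universal constant, contradicting $\rho\ll_\epsilon 1$. When $\rho<r_0$ the supremum defining $\rho$ is attained and, since $s\mapsto s\int_{B_s(x)}|F_A|^2$ is continuous and nondecreasing, we get the saturation identity $\rho\int_{B_\rho(x)}|F_A|^2=1$. Since $F_A=\tan(\alpha)^{-2}\mu(\Psi)$ by \eqref{Eq_nSW}, it is convenient to split the numerator of the frequency as $H(s)=H^\nabla(s)+H^\mu(s)$ with $H^\mu(s):=\int_{B_s(x)}\tan(\alpha)^{-2}|\mu(\Psi)|^2$, so that $\int_{B_s(x)}|F_A|^2=\tan(\alpha)^{-2}H^\mu(s)$ and the saturation identity reads $\tan(\alpha)^{-2}=1/(\rho H^\mu(\rho))$. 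As $s\mapsto\int_{B_s(x)}|F_A|^2$ is nondecreasing and $r\le\rho$,
\[
  r\int_{B_{\rho/2}(x)}|F_A|^2\;\le\;\rho\int_{B_{\rho/2}(x)}|F_A|^2\;=\;\frac{H^\mu(\rho/2)}{H^\mu(\rho)} .
\]
Thus the proposition is equivalent to: in the regime $\rho\ll1$, $\scN(\rho)\ll1$, almost all of the $L^2$-mass of $\mu(\Psi)$ on $B_\rho(x)$ lies in the shell $B_\rho(x)\setminus B_{\rho/2}(x)$.

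\textbf{Soft inputs.} To bound $H^\mu(\rho/2)/H^\mu(\rho)$ I would first assemble what the machinery already gives. From \autoref{Prop_APrioriBounds}, $\|\Psi\|_{L^\infty}=O(1)$, so $h(\rho)\lesssim\rho^2$; combining \autoref{Prop_PsiW22Bound} with the Sobolev embedding $W^{1,2}\hookrightarrow L^6$ and Kato's inequality on $B_{\rho/2}(x)$ yields a scale-invariant bound of the form $\rho^2\|\mu(\Psi)\|_{L^\infty(B_{\rho/2}(x))}+\rho^{5/2}[\mu(\Psi)]_{C^{0,1/2}(B_{\rho/2}(x))}\lesssim h(\rho)$, so $\mu(\Psi)$, hence $|F_A|^2$, is quantitatively Hölder continuous on $B_{\rho/2}(x)$ after rescaling. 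From \autoref{Prop_NMaxGrowth}, \autoref{Prop_NControlsGrowthOfh}, \autoref{Cor_hGrowth} and the boundary-integral identity \eqref{Eq_HAsBoundaryIntegral} one gets $\scN(s)\lesssim\scN(\rho)+O(\rho^2)$ for $s\le\rho$ and $H(s)=\scN(s)h(s)/s\lesssim(\scN(\rho)+\rho^2)s$; since $H^\mu(\rho)\le H(\rho)$, the saturation identity then forces $\tan(\alpha)^2\lesssim(\scN(\rho)+\rho^2)\,\rho^2$, i.e.\ $\alpha$ is itself small. Finally, smallness of $\scN(\rho)$ confines $\Psi$ near its ``degree-zero part'' over $B_\rho(x)$: after rescaling $B_\rho(x)$ to the unit ball and normalising, $\Psi$ is $W^{2,2}_\loc$-close to a constant spinor with vanishing moment map, and through the pointwise identity $\tan(\alpha)^{-2}|\mu(\Psi)|^2=-\tfrac12\Delta|\Psi|^2-|\nabla_{A\otimes B}\Psi|^2+O(|\Psi|^2)$ of \eqref{Eq_DeltaPsi} the contribution of $|F_A|^2$ on $B_{\rho/2}(x)$ becomes negligible relative to $B_\rho(x)$.

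\textbf{The main obstacle.} The first two inputs are routine bookkeeping; the decisive and hardest step is extracting genuine \emph{concentration} of the curvature in the shell from smallness of $\rho$ and $\scN(\rho)$. I expect the clean way to do this is a contradiction/rescaling argument in the spirit of Taubes~\cite{Taubes2012}: assume solutions $(A_i,\Psi_i,\alpha_i)$ with $\rho_i\to0$, $\scN_i(\rho_i)\to0$ and $\rho_i\int_{B_{\rho_i/2}(x_i)}|F_{A_i}|^2>\epsilon$, rescale $B_{\rho_i}(x_i)$ to the unit ball, normalise $\Psi_i$ and $F_{A_i}$, and pass to a subsequential limit; the uniform $W^{2,2}$ and a priori bounds give convergence of $\Psi_i$, while the saturation identity pins the behaviour of the blowing-up coupling $\tan(\alpha_i)^{-2}$. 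The limit should be a zero-frequency solution of the $\alpha=0$ equation on the unit ball — hence rigid (a constant with $\mu=0$) — and the rescaled curvature, being governed by $\mu(\Psi)$, should then have vanishing $L^2$-mass on the half-ball, contradicting the lower bound. The technical heart, and the step I expect to take the most care, is controlling the interplay between the $\tan(\alpha)^{-2}\to\infty$ normalisation and the pinching of the frequency so that this limit and its rigidity are actually valid.
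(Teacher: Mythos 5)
Your reduction is fine as far as it goes: the saturation identity $\rho\int_{B_\rho(x)}|F_A|^2=1$, the reformulation of the claim as smallness of $H^\mu(\rho/2)/H^\mu(\rho)$, and the observation that $\scN(\rho)$ small forces $\alpha$ small are all correct (though the paper's one-line version is sharper and is what is actually needed: $\tan(\alpha)^2/h(\rho)\le\scN(\rho)$, i.e.\ the \emph{normalised} coupling $\tau=\tan(\alpha)/\sqrt{h(\rho)}$ is small, which does not follow from smallness of $\tan(\alpha)$ alone since $h(\rho)$ may be tiny). But the proof stops exactly where the content begins, and the mechanism you sketch for the final step cannot close it. Every soft input you invoke — $\mu(\Psi_i)\to 0$, rigidity of the $\alpha=0$ limit, the identity \eqref{Eq_DeltaPsi} — controls $\tan(\alpha)^{-2}|\mu(\Psi)|^2=\tan(\alpha)^{2}|F_A|^2$, i.e.\ the quantity $H^\mu$ already present in the frequency, whereas the proposition concerns $|F_A|^2=\tan(\alpha)^{-4}|\mu(\Psi)|^2$, which carries an extra divergent factor $\tan(\alpha)^{-2}$. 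In your contradiction/rescaling scheme the rescaled curvatures have normalised $L^2$-mass exactly $1$ on the unit ball by saturation, and weak convergence of $\Psi_i$ plus rigidity of the limit (a constant with $\mu=0$) says nothing about \emph{where} that unit of curvature mass sits: the measures $|F_{A_i}|^2\,d\mathrm{vol}$ could a priori concentrate in the half-ball. So the step you defer as "the technical heart" is not a matter of care with the normalisation; it is a missing quantitative estimate that no soft limit argument of the kind described supplies.

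The paper's mechanism, which is absent from your proposal, is an interior elliptic estimate for the moment map itself (\autoref{Prop_InteriorCurvatureBounds}). After rescaling to the unit ball and normalising $h=1$, one first shows $\lambda\le|\Psi|\le\Lambda$ in the interior when $\fe\ll 1$ (here $\fe\lesssim\scN(\rho)+O(\rho^2)$) and $\|\mu(\Psi)\|_{L^\infty}\lesssim_\delta\tau^{1/8}$. The Weitzenböck formula then yields a second-order equation of the schematic form $\nabla^*\nabla\mu(\Psi)+\tau^{-2}|\Psi|^2\mu(\Psi)=2\tau^{-2}\mu(\Psi)\circ\mu(\Psi)+({\rm terms\ controlled\ by\ }\fe)$; testing against a cut-off times $\mu(\Psi)$, the coercive term $\tau^{-2}\lambda^2|\mu(\Psi)|^2$ dominates, the cubic term is absorbed using the $L^\infty$ bound, and one obtains $\int_{B_{1-\delta}}|\mu(\Psi)|^2\lesssim_\delta\tau^4(\fe+\fe^{1/4})$. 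Dividing by $\tau^4$ gives $\int_{B_{1-\delta}}|F_A|^2\lesssim\fe+\fe^{1/4}$, which is precisely the concentration statement you reduced to. Note the point: the trivial bound coming from the frequency is only $\int|\mu(\Psi)|^2\lesssim\tau^2\fe$, which is useless after dividing by $\tau^4$; the gain from $\tau^2$ to $\tau^4$ is exactly what the coercivity of the zeroth-order term buys, and it is this estimate — not bookkeeping — that your argument lacks. (Secondarily, your claimed "soft" bound $\rho^2\|\mu(\Psi)\|_{L^\infty(B_{\rho/2})}+\rho^{5/2}[\mu(\Psi)]_{C^{0,1/2}}\lesssim h(\rho)$ does not follow from \autoref{Prop_PsiW22Bound} with the stated exponents, but repairing it would still not address the main gap.)
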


Since
\begin{equation*}
  \frac{\tan(\alpha)^2}{h(\rho)} \leq \(\rho\int_{B_\rho(x)} |F_A|^2\)^{-1} \scN(\rho) = \scN(\rho),
\end{equation*}
this is a direct consequence of the following.

\begin{prop}
  \label{Prop_InteriorCurvatureBounds}
  Denote by $(B_r,g)$ a Riemannian $3$--ball of radius $r > 0$, by $\sL_0$ a $\U(1)$--bundle over $B_r$, by $E_0$ an $\SU(n)$--bundle over $B_r$ and by $B$ a connection on $E_0$.
  Suppose that $(A, \Psi, \alpha) \in \sA(\sL_0) \times \Gamma\(\Hom(E_0, \slS\otimes\sL_0)\) \times (0,\pi/2)$ satisfies \eqref{Eq_nSWNotNormalised}.
  Set
  \begin{gather*}
    \fe:= \frac{r\int_{B_r} |\nabla_{A\otimes B}\Psi|^2}{\int_{\del B_r} |\Psi|^2} + r^2\|R_g\|_{L^\infty(B_r)} + r^2\|F_B\|_{L^\infty(B_r)} \\
    \andq \tau := \frac{\tan(\alpha)}{\sqrt{\int_{\del B_r} |\Psi|^2}}.
  \end{gather*}
  Let $\delta \in (0,1)$ and $\epsilon > 0$.
  If
  \begin{equation*}
    r^{1/2}\|F_A\|_{L^2(B_r)} \leq 1, \quad
    \fe \ll_{\delta,\epsilon} 1 \qandq
    \tau \ll_{\delta,\epsilon} 1,
  \end{equation*}
  then
  \begin{equation*}
    r^{1/2}\|F_A\|_{L^2(B_{(1-\delta)r})} \leq \epsilon.
  \end{equation*}
\end{prop}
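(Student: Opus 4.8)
The plan is to exploit the second Seiberg--Witten equation $\sin(\alpha)^2 F_A = \cos(\alpha)^2\mu(\Psi)$, which reads $F_A = \tan(\alpha)^{-2}\mu(\Psi)$, to trade curvature bounds for bounds on $\mu(\Psi)$, and then to control $\mu(\Psi)$ using the integration-by-parts identity \eqref{Eq_IntegrationByParts} together with the hypothesis that the (renormalised) Dirichlet energy $\fe$ of $\Psi$ is small. After rescaling so that $r = 1$ (the hypotheses and conclusion are scale-invariant in the stated combinations) and normalising so that $\int_{\del B_1}|\Psi|^2 = 1$, the quantities become: $\int_{B_1}|\nabla_{A\otimes B}\Psi|^2 = O(\fe)$, the metric and $F_B$ contributions are $O(\fe)$, and $\tan(\alpha)^2 = \tau^2$ is small. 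First I would apply \autoref{Prop_IntegrationByParts} with $f=1$ and $U = B_1$ (or a slightly smaller concentric ball, using a cutoff to handle the boundary term), which gives
\[
  \int_{B_1} \tan(\alpha)^{-2}|\mu(\Psi)|^2 + |\nabla_A\Psi|^2
  \lesssim \int_{\del B_1}\del_\nu|\Psi|^2 + O(\fe),
\]
and the boundary term $\int_{\del B_1}\del_\nu|\Psi|^2$ is controlled by $2H(1) + O(1)\int_{B_1}|\Psi|^2 = O(\fe)$ via \eqref{Eq_HAsBoundaryIntegral} and \eqref{Eq_ErrorBound}. Hence $\tan(\alpha)^{-2}\int_{B_1}|\mu(\Psi)|^2 = O(\fe)$, i.e.
\[
  \int_{B_1}|F_A|^2 = \tan(\alpha)^{-2}\int_{B_1}|F_A||\mu(\Psi)| = \tan(\alpha)^{-4}\int_{B_1}|\mu(\Psi)|^2 = \tan(\alpha)^{-2}\cdot O(\fe) = \tau^{-2}O(\fe).
\]
This is not yet good enough, since $\tau$ is small; the naive estimate loses a power of $\tau$.

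The fix is to improve the $L^2$ bound on $\mu(\Psi)$ from $O(\tan(\alpha)^2\fe)$ to $O(\tan(\alpha)^4)$ by a bootstrap that uses the first Seiberg--Witten equation $\slD_{A\otimes B}\Psi = 0$ more carefully, following Taubes. Concretely, I would rewrite the curvature in terms of $\mu(\Psi)$ and use that $\nabla_{A\otimes B}\Psi$ is small in $L^2$ to get an $L^2$ bound on $\Psi$ itself that is geometrically small; then $|\mu(\Psi)| \leq |\Psi|^2$ pointwise gives $\int_{B_1}|\mu(\Psi)|^2 \lesssim \|\Psi\|_{L^4}^4$, and the Gagliardo--Nirenberg inequality $\|\Psi\|_{L^4} \lesssim \|\nabla_{A\otimes B}\Psi\|_{L^2}^{3/4}\|\Psi\|_{L^2}^{1/4}$ (plus Kato) together with $\|\nabla_{A\otimes B}\Psi\|_{L^2}^2 = O(\fe)$ and the normalisation $\|\Psi\|_{L^2(B_1)} = O(1)$ yields $\int_{B_1}|\mu(\Psi)|^2 = O(\fe^{3})$. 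Feeding this back, $\int_{B_{1-\delta}}|F_A|^2 = \tan(\alpha)^{-4}\int|\mu(\Psi)|^2 = \tau^{-4}O(\fe^3)$. To close the loop I would then re-run the frequency/integration-by-parts estimate on the ball $B_{1-\delta}$ with the \emph{improved} curvature bound in hand, localising by a cutoff supported in $B_{1-\delta/2}$: the point is that on the slightly smaller ball one may use interior elliptic estimates (as in \autoref{Prop_PsiW22Bound}) so that the constants only depend on $\delta$, and iterating the improvement finitely many times (each step gaining a power of $\fe$ against a fixed power of $\tau$) drives $\int_{B_{(1-\delta)r}}|F_A|^2$ below $\epsilon$ once both $\fe$ and $\tau$ are small enough in terms of $\delta$ and $\epsilon$.

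The main obstacle is the interplay between the two small parameters $\fe$ and $\tau$: a single application of the Weitzenböck/integration-by-parts machinery gives $\int|F_A|^2 \lesssim \tau^{-2}\fe$, which is useless when $\tau \to 0$ faster than $\fe$, so one genuinely needs the bootstrap that exploits the cubic (rather than quadratic) smallness of $\int|\mu(\Psi)|^2$ coming from the $L^4$ interpolation. Making this rigorous requires care with the cutoff functions and with the boundary terms in \eqref{Eq_IntegrationByParts} when $U$ is a ball of radius slightly less than $r$ rather than all of $M$, and with tracking that the implied constants in each bootstrap step remain bounded by a fixed function of $\delta$ alone. The scale-invariance of every quantity in the statement is what makes the reduction to $r=1$ legitimate and keeps the argument clean.
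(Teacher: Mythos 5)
There is a genuine gap, and it sits exactly at the step you flag as the ``fix''. Your bootstrap rests on making $\Psi$ itself small: you invoke $\|\Psi\|_{L^4} \lesssim \|\nabla_{A\otimes B}\Psi\|_{L^2}^{3/4}\|\Psi\|_{L^2}^{1/4}$ to conclude $\int_{B_1}|\mu(\Psi)|^2 \lesssim \fe^{3}$ (even granting the scheme, the exponent would be $\fe^{3/2}$, not $\fe^{3}$). But on a ball the homogeneous Gagliardo--Nirenberg inequality is false (test with a constant); the correct form carries the lower-order term $+\|\Psi\|_{L^2}$, and here $\|\Psi\|_{L^2(B_1)}$ is of order one, not small: the normalisation is $\int_{\del B_1}|\Psi|^2=1$, \eqref{Eq_ErrorBound} gives $\int_{B_1}|\Psi|^2\lesssim 1$ from above, and the growth estimates (\autoref{Cor_hGrowth}, \autoref{Prop_NControlsGrowthOfh}) show it is bounded \emph{below} as well. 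Indeed the paper proves the opposite of what your bootstrap needs: when $\fe\ll_\delta 1$, $|\Psi|\ge\lambda>0$ on $B_{1-\delta}$. Smallness of $\nabla_{A\otimes B}\Psi$ means $\Psi$ is nearly covariantly constant, not small, and smallness of $\mu(\Psi)$ must come from $\Psi$ approaching the non-zero locus $\mu^{-1}(0)$, not from $\|\Psi\|_{L^4}\to 0$. In addition, your first step's claim that the boundary term $\int_{\del B_1}\del_\nu|\Psi|^2$ is $O(\fe)$ does not follow: \eqref{Eq_HAsBoundaryIntegral} expresses it through $H(1)$, which contains the very term $\tan(\alpha)^{-2}\int|\mu(\Psi)|^2$ you are trying to bound, plus $\int_{B_1}|\Psi|^2=O(1)$. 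Finally, the closing iteration cannot work even in principle: any estimate of the form $\int|F_A|^2\lesssim \tau^{-N}\fe^{M}$ is vacuous here, because $\fe$ and $\tau$ are \emph{independent} small parameters and the hypothesis allows $\tau$ to be far smaller than any fixed power of $\fe$; the conclusion must be uniform in $\tau$, so no finite gain of powers of $\fe$ can beat a leftover negative power of $\tau$.

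The paper's mechanism is structurally different and is the missing idea. One first establishes, for $\fe\ll_\delta 1$, the interior bounds $\lambda\le|\Psi|\le\Lambda$ on $B_{1-\delta}$ (\autoref{Prop_PsiLowerUpperBounds}, via a H\"older bound $[|\Psi|]_{C^{0,1/4}}\lesssim_\delta\fe^{1/8}$ and the boundary normalisation) and the a priori sup bound $\|\mu(\Psi)\|_{L^\infty(B_{1-\delta})}\lesssim_\delta\tau^{1/8}$ (from $\|\mu(\Psi)\|_{L^2}\le\tau^2$ interpolated against $W^{2,2}$ control). Then one derives a Weitzenb\"ock-type elliptic equation for $\mu(\Psi)$ itself, in which the term $\tau^{-2}|\Psi|^2\mu(\Psi)$ appears as a \emph{large positive potential} thanks to $|\Psi|\ge\lambda$; the only other $\tau^{-2}$ term is cubic in $\mu(\Psi)$ and is absorbed using $\|\mu(\Psi)\|_{L^\infty}\lesssim_\delta\tau^{1/8}\ll 1$, while the remaining sources ($F_B$, $|\nabla_{A\otimes B}\Psi|^2$) carry no negative power of $\tau$. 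Testing against a cutoff times $\mu(\Psi)$ then yields $\int_{B_{1-\delta}}|\mu(\Psi)|^2\lesssim_\delta\tau^4\(\fe+\fe^{1/4}\)$, and since $F_A=\tau^{-2}\mu(\Psi)$ after normalisation, the powers of $\tau$ cancel exactly, giving $\|F_A\|_{L^2(B_{1-\delta})}^2\lesssim_\delta\fe^{1/4}$, which is $\le\epsilon^2$ once $\fe\ll_{\delta,\epsilon}1$ --- uniformly in $\tau$. Without the lower bound on $|\Psi|$ and the elliptic equation for $\mu(\Psi)$, the cancellation of the $\tau$-powers is lost, which is precisely why your version cannot close.
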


The statement of \autoref{Prop_InteriorCurvatureBounds} is invariant under rescaling $B_r$, multiplying $\Psi$ by a constant and changing $\alpha$---hence, $\tan(\alpha)$---accordingly so that \eqref{Eq_nSWNotNormalised} still holds.
Therefore, it suffices to consider the case $r = 1$ and $\int_{\del B_r} |\Psi|^2 = 1$.
Throughout the rest of this subsection assume the hypotheses of \autoref{Prop_InteriorCurvatureBounds} with this normalisation.

\begin{prop}
  \label{Prop_PsiLowerUpperBounds}
  There are constants $0 < \lambda \leq \Lambda = \Lambda(\delta) $ such that in $B_{1-\delta}$
  \begin{equation*}
    |\Psi| \leq \Lambda
    \qandq\text{if $\fe \ll_\delta 1$,}
    \quad\text{then}\quad
    |\Psi| \geq \lambda.
  \end{equation*}
\end{prop}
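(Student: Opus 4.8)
The plan is to derive both bounds from the Weitzenböck/integration-by-parts machinery already in place, exploiting the normalisation $r=1$, $\int_{\partial B_1}|\Psi|^2 = 1$, together with the smallness of $\fe$ and $\tau$. For the upper bound, recall from the proof of \autoref{Prop_APrioriBounds} the pointwise identity \eqref{Eq_DeltaPsi}, which on $B_1$ reads
\begin{equation*}
  \tfrac12\Delta|\Psi|^2 + \tfrac{s}{4}|\Psi|^2 + \langle F_B\Psi,\Psi\rangle + \tan(\alpha)^{-2}|\mu(\Psi)|^2 + |\nabla_A\Psi|^2 = 0.
\end{equation*}
Dropping the non-negative terms $\tan(\alpha)^{-2}|\mu(\Psi)|^2$ and $|\nabla_A\Psi|^2$ gives a differential inequality $\Delta|\Psi|^2 \leq C(\|R_g\|_{L^\infty} + \|F_B\|_{L^\infty})|\Psi|^2 \lesssim \fe\,|\Psi|^2$ on $B_1$ (after absorbing $r$'s; note the scalar curvature and $F_B$ terms are controlled by $\fe$). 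So $|\Psi|^2$ is a non-negative subsolution of a uniformly elliptic equation with bounded coefficients. By the local maximum principle / De Giorgi--Nash--Moser estimate on the ball $B_{1-\delta}$,
\begin{equation*}
  \||\Psi|^2\|_{L^\infty(B_{1-\delta})} \lesssim_\delta \||\Psi|^2\|_{L^1(B_1)} = \|\Psi\|_{L^2(B_1)}^2.
\end{equation*}
Finally $\|\Psi\|_{L^2(B_1)}^2 \lesssim h(1) = \int_{\partial B_1}|\Psi|^2 = 1$ by \eqref{Eq_ErrorBound} (or \autoref{Cor_hGrowth}), giving $|\Psi| \leq \Lambda(\delta)$ as claimed.

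For the lower bound, the key point is that $|\Psi|^2$ is \emph{almost} harmonic when $\fe$ is small, so it cannot vanish (or become tiny) in the interior while having $L^2$-average of order one on the boundary sphere. Concretely, from \eqref{Eq_DeltaPsi} again, $\tfrac12\Delta|\Psi|^2 = -|\nabla_A\Psi|^2 - \tan(\alpha)^{-2}|\mu(\Psi)|^2 - \tfrac{s}{4}|\Psi|^2 - \langle F_B\Psi,\Psi\rangle$, whence $|\Delta|\Psi|^2| \lesssim |\nabla_A\Psi|^2 + \tan(\alpha)^{-2}|\mu(\Psi)|^2 + \fe\,|\Psi|^2$ pointwise, and integrating over $B_1$ and using \eqref{Eq_ErrorBound} and the definition of $\fe$ shows $\|\Delta|\Psi|^2\|_{L^1(B_1)} \lesssim \fe$ — at least on a slightly smaller ball; one can also integrate $H(1) = \int_{B_1}|\nabla_A\Psi|^2 + \tan(\alpha)^{-2}|\mu(\Psi)|^2$ directly, noting $H(1) = \scN(1) h(1) \lesssim \fe$ by the definition of $\fe$ (indeed $r\int_{B_r}|\nabla_{A\otimes B}\Psi|^2 / \int_{\partial B_r}|\Psi|^2 \leq \fe$ is built into $\fe$). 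So one writes $|\Psi|^2 = u + v$ on, say, $B_{1-\delta/2}$, where $u$ is harmonic with the same boundary values as $|\Psi|^2$ on $\partial B_{1-\delta/2}$ and $\Delta v = \Delta|\Psi|^2$, $v|_{\partial B_{1-\delta/2}} = 0$. Elliptic estimates give $\|v\|_{L^\infty(B_{1-\delta/2})} \lesssim_\delta \|\Delta|\Psi|^2\|_{L^p(B_{1-\delta/2})}$ for suitable $p$ — here one must be slightly careful, since only an $L^1$-type bound on $\Delta|\Psi|^2$ is cheap; using that $\Delta|\Psi|^2 \leq 0$ pointwise (so $|\Psi|^2$ is superharmonic!) one actually gets $u \geq |\Psi|^2 \geq 0$ and $v \leq 0$ with $\int |v| \lesssim \fe$ — and then from the mean value property applied to the harmonic function $u$ on a chain of balls, $u$ cannot be small on $B_{1-\delta}$ because its boundary integral over $\partial B_1$ is comparable to $1$ while the correction $v$ is $O(\fe)$ in $L^1$, hence $O_\delta(\fe)$ in $L^\infty$ on the slightly smaller ball by interior estimates for $\Delta v = \Delta|\Psi|^2 \in L^1$. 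Choosing $\fe \ll_\delta 1$ then forces $\inf_{B_{1-\delta}}|\Psi|^2 \geq \lambda(\delta) > 0$.

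I expect the main obstacle to be the lower bound, specifically upgrading the cheap $L^1$ control of $\Delta|\Psi|^2$ (coming from $H(1) \lesssim \fe$ together with the identity $|\Delta|\Psi|^2| \lesssim |\nabla_A\Psi|^2 + \tan(\alpha)^{-2}|\mu(\Psi)|^2 + \fe|\Psi|^2$) into a pointwise bound on the harmonic correction $v$, and ensuring the sign structure ($|\Psi|^2$ superharmonic) is used correctly so that the boundary sphere value $\int_{\partial B_1}|\Psi|^2 = 1$ really does propagate to an interior lower bound. A clean route is: since $|\Psi|^2$ is superharmonic on $B_1$, for any $x \in B_{1-\delta}$ we have $|\Psi|^2(x) \geq \fint_{\partial B_{1-\delta}(x)}|\Psi|^2$ is the wrong direction; instead use that the spherical average $\phi(r) := \fint_{\partial B_r(x)}|\Psi|^2$ satisfies $\phi'(r) = \fint_{\partial B_r(x)} \partial_r|\Psi|^2 = \tfrac{1}{|\partial B_r|}\int_{B_r}\Delta|\Psi|^2 \leq 0$, so $\phi$ is non-increasing; combined with $\phi(r) - \phi(0) = |\Psi|^2(x) - \phi(\text{some }r) \leq $ (an $\fe$-controlled integral of $\Delta|\Psi|^2$ over an annulus), and a lower bound on some $\phi(r_*)$ with $r_* \asymp_\delta 1$ coming from $h(r_*) \gtrsim r_*^2$ (which follows from $h(1)=1$ and \autoref{Cor_hGrowth}'s monotonicity $h(s) \lesssim (s/r)^2 h(r)$ run in reverse, i.e. $h(r_*) \gtrsim h(1)$ when $r_* \asymp 1$), one concludes $|\Psi|^2(x) \geq \phi(r_*) - O_\delta(\fe) \geq \lambda(\delta)$ once $\fe$ is small enough. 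Assembling these elementary ODE/averaging facts carefully, with attention to the metric error terms (absorbed into $\fe$ via $r^2\|R_g\|_{L^\infty}$), is the substance of the argument.
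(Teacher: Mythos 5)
Your upper bound is fine: dropping the non-negative terms in \eqref{Eq_DeltaPsi}, using \eqref{Eq_ErrorBound} to get $\|\Psi\|_{L^2(B_1)}=O(1)$, and invoking the local maximum principle for subsolutions is a legitimate (and essentially equivalent) variant of the paper's Green's-function argument. The lower bound, however, contains a genuine gap, and it is a sign error at the heart of your mechanism. With the paper's (positive) Laplacian, \eqref{Eq_DeltaPsi} gives $\Delta|\Psi|^2\le -\tfrac{s}{2}|\Psi|^2-2\<F_B\Psi,\Psi\>$, i.e.\ up to curvature terms $|\Psi|^2$ is \emph{sub}harmonic in the usual analytic sense (sub-mean-value property) --- exactly the fact you use for the upper bound. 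It is not superharmonic, so the conclusions you draw from superharmonicity all reverse: the harmonic replacement satisfies $u\le |\Psi|^2+O(\fe\text{-terms})$ rather than $u\ge|\Psi|^2$, and the spherical average $\phi(r)$ is (almost) non-\emph{de}creasing, so $|\Psi|^2(x)=\phi(0^+)\le \phi(r_*)+\dots$, which is the useless direction. Subharmonicity plus a boundary average of order one can never by itself force an interior lower bound (think of $|f|^2$ for a holomorphic, or here harmonic-spinor-like, object vanishing at an interior point --- indeed the whole paper is about the possibility that $Z=|\Psi|^{-1}(0)\ne\emptyset$).

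The remaining quantitative step is also not salvageable as stated: you only have $\|\Delta|\Psi|^2\|_{L^1(B_1)}\lesssim \fe+\tau^2$, and in dimension three an $L^1$ bound on the Laplacian does \emph{not} give an $L^\infty$ (or even pointwise-at-one-point) bound on the correction $v$, since the Green kernel $\sim d(x,y)^{-1}$ is not in $L^\infty$; a source of mass $\epsilon$ concentrated at scale $\epsilon$ near $x$ produces an $O(1)$ dip of $|\Psi|^2$ at $x$ while keeping $\|\Delta|\Psi|^2\|_{L^1}=O(\epsilon)$. To rule out such concentration you need better-than-$L^1$ integrability of $|\nabla_{A\otimes B}\Psi|^2$ and of $\tan(\alpha)^{-2}|\mu(\Psi)|^2$ near the point, and that is precisely where the paper goes a different way: using the $W^{2,2}$ bound of \autoref{Prop_PsiW22Bound} (available because $\|F_A\|_{L^2(B_1)}\le 1$) together with Gagliardo--Nirenberg it proves $\|\nabla_{A\otimes B}\Psi\|_{L^4(B_{1-\delta})}\lesssim_\delta\fe^{1/8}$ \eqref{Eq_NablaPsiL4}, hence via Kato and Morrey a \emph{small H\"older seminorm} $[\,|\Psi|\,]_{C^{0,1/4}(B_{1-\delta})}\lesssim_\delta\fe^{1/8}$; combining this uniform near-constancy with $\int_{\partial B_{1-\delta}}|\Psi|^2\gtrsim\int_{\partial B_1}|\Psi|^2=1$ from \autoref{Prop_NControlsGrowthOfh} gives the pointwise lower bound once $\fe\ll_\delta 1$. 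If you want to repair your argument, you must import this $C^0$ modulus of continuity (or an equivalent $L^p$, $p>3/2$, bound on $\Delta|\Psi|^2$); averaged information alone cannot do it.
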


\begin{proof}
  We proceed in three steps.

  \setcounter{step}{0}
  \begin{step}
    \label{Step_LinftyBoundAwayFromBoundary}
    If $\fe \leq 1$, then for each $x\in B_1$
    \begin{equation*}
      |\Psi|^2(x) \lesssim d(x,\del B_1)^{-2}.
    \end{equation*}
    In particular, $|\Psi| \leq \Lambda(\delta) = O(1/\delta)$.
  \end{step}

  We use a slight modification of the argument used to prove \autoref{Prop_APrioriBounds}.
  It follows from \eqref{Eq_ErrorBound} that $\|\Psi\|_{L^2(B_1)} = O(1)$ and thus $\||\Psi|\|_{W^{1,2}(B_1)} = O(1)$;
  hence, by Kato's inequality and Sobolev embedding we have $\|\Psi\|_{L^6(B_1)} = O(1)$.

  Let $G$ denote the Green's function for $\Delta$ on $B_1$.
  Fix $x \in B_1$ and set $f := G(x,\cdot)$.
  Then
  \begin{equation*}
    f \lesssim \frac{1}{d(x,\cdot)} \qandq
    |\nabla f| \lesssim \frac{1}{d(x,\cdot)^2}. 
  \end{equation*}
  Apply \autoref{Prop_IntegrationByParts} with $f$ as above and $U = B_1 \setminus B_\sigma(x)$, and pass to the limit $\sigma = 0$ to obtain
  \begin{equation*}
    |\Psi|^2(x) \lesssim \int_{B_1} \frac{|\Psi|^2}{d(x,\cdot)} + d(x,\del B_1)^{-1} \int_{\del B_1} \del_r |\Psi|^2 + d(x,\del B_1)^{-2}.
  \end{equation*}
  The first term is $O(1)$ since $\|1/d(x,\cdot)\|_{L^{3/2}(B_1)} = O(1)$.
  Applying \autoref{Prop_IntegrationByParts} again with $f=1$ and $U=B_1$ gives
  \begin{equation*}
    \int_{\del B_1} \del_r |\Psi|^2 \lesssim \int_{B_1} |\Psi|^2 + |\nabla_A \Psi|^2 + \tau^{-2}|\mu(\Psi)|^2 = O(1).
  \end{equation*}
  Here we have also used that 
  \begin{equation}
    \label{Eq_MuL2Bound}
    \|\mu(\Psi)\|_{L^2(B_1)} = \tau^2\|F_A\|_{L^2(B_1)} \leq \tau^2.
  \end{equation}

  \begin{step}
    \label{Step_HolderBound}
    We have $[|\Psi|]_{C^{0,1/4}(B_{1-\delta})} \lesssim_\delta \fe^{1/8}$.
  \end{step}

  Combining the Gagliardo--Nirenberg interpolation inequality
    \begin{equation*} 
      \|f\|_{L^4(B_{1-\delta})} \lesssim_\delta \|\nabla f\|_{L^2(B_{1-\delta})}^{3/4}\|f\|_{L^2(B_{1-\delta})}^{1/4} + \|f\|_{L^2(B_{1-\delta})},
    \end{equation*}
   with Kato's inequality, we obtain 
  \begin{equation}\label{Eq_NablaPsiL4}
  \begin{aligned}
       \|\nabla_{A \otimes B}\Psi\|_{L^4(B_{1-\delta})} 
      &\lesssim_\delta \|\nabla|\nabla_{A\otimes B}\Psi|\|_{L^2(B_{1-\delta})}^{3/4}\|\nabla_{A \otimes B}\Psi\|_{L^2(B_{1-\delta})}^{1/4} \\
      &\quad\quad + \|\nabla_{A \otimes B}\Psi\|_{L^2(B_{1-\delta})} \\
      &\lesssim_\delta \fe^{1/8}.
    \end{aligned} 
  \end{equation} 
  The asserted estimate now follows from Morrey's inequality combined with Kato's inequality. 

  \begin{step}
    There is a constant $\lambda > 0$ such that if $\fe \ll_{\delta} 1$, then in $B_{1-\delta}$
    \begin{equation*}
      |\Psi| \geq \lambda.
    \end{equation*}
  \end{step}

  We know from \autoref{Prop_NControlsGrowthOfh} that
  \begin{align*}
    \int_{\del B_{1-\delta}} |\Psi|^2 \gtrsim \int_{\del B_{1}} |\Psi|^2 = 1,
  \end{align*}
  which proves the lower bound on $|\Psi|$ when combined with \autoref{Step_HolderBound}.
\end{proof}

\begin{prop}
  If $\fe \leq 1$, then
  \begin{equation*}
    \|\mu(\Psi)\|_{L^\infty(B_{1-\delta})} \lesssim_\delta \tau^{1/8}.
  \end{equation*}
\end{prop}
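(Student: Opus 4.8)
The plan is to bootstrap from the $L^2$--smallness of $\mu(\Psi)$ that the equation provides for free to an $L^\infty$--bound, by interpolating against a uniform Hölder bound. With the normalisation $\int_{\del B_1}|\Psi|^2=1$ in force, the second line of \eqref{Eq_nSWNotNormalised} reads $\mu(\Psi)=\tau^2 F_A$, so the hypothesis $\|F_A\|_{L^2(B_1)}\leq 1$ gives $\|\mu(\Psi)\|_{L^2(B_1)}\leq\tau^2$, cf.~\eqref{Eq_MuL2Bound}. If $\tau\geq 1$ there is nothing to prove, since $\|\mu(\Psi)\|_{L^\infty(B_{1-\delta})}\lesssim\|\Psi\|_{L^\infty(B_{1-\delta})}^2\lesssim_\delta 1\leq\tau^{1/8}$ by \autoref{Step_LinftyBoundAwayFromBoundary} of \autoref{Prop_PsiLowerUpperBounds}; so assume $\tau\leq 1$ henceforth.

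The first step is to establish a uniform $C^{0,1/4}$--bound on $\mu(\Psi)$, by rerunning the argument of \autoref{Step_HolderBound}. Since $\mu(\Psi)=\Psi\Psi^*-\frac12|\Psi|^2\id_\slS$ is quadratic in $\Psi$ and the connection induced on $\End(\slS\otimes\sL)=\End(\slS)$ is just the spin connection (the $A$-- and $B$--parts cancelling), one has the pointwise bound $|\nabla\mu(\Psi)|\lesssim|\nabla_{A\otimes B}\Psi|\,|\Psi|$. Feeding in $\|\Psi\|_{L^\infty(B_{1-\delta/2})}\lesssim_\delta 1$ from \autoref{Step_LinftyBoundAwayFromBoundary} and $\|\nabla_{A\otimes B}\Psi\|_{L^4(B_{1-\delta/2})}\lesssim_\delta\fe^{1/8}\leq 1$ from \eqref{Eq_NablaPsiL4} gives $\|\nabla\mu(\Psi)\|_{L^4(B_{1-\delta/2})}\lesssim_\delta 1$, and Morrey's inequality ($W^{1,4}\hookrightarrow C^{0,1/4}$ in dimension three) then yields $K:=[\mu(\Psi)]_{C^{0,1/4}(B_{1-\delta/2})}\lesssim_\delta 1$.

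The second step is the interpolation. Put $M:=\|\mu(\Psi)\|_{L^\infty(B_{1-\delta})}$ and choose $x_0\in\overline{B_{1-\delta}}$ with $|\mu(\Psi)|(x_0)\geq M/2$. For $s:=\min\{\delta/4,(M/(4K))^4\}$ the Hölder bound forces $|\mu(\Psi)|\geq M/4$ on $B_s(x_0)$, a ball contained in $B_{1-\delta/2}$, so
\begin{equation*}
  \tau^4\;\geq\;\|\mu(\Psi)\|_{L^2(B_1)}^2\;\gtrsim\;M^2 s^3\;\gtrsim_\delta\;M^2\min\{1,(M/K)^{12}\}.
\end{equation*}
If the minimum equals $1$ this reads $M\lesssim_\delta\tau^2$; otherwise $M^{14}\lesssim_\delta K^{12}\tau^4$, so $M\lesssim_\delta\tau^{2/7}$ using $K\lesssim_\delta 1$. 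As $\tau\leq 1$, either way $M\lesssim_\delta\tau^{1/8}$, which is the assertion. There is no real obstacle here---the argument is a minor variant of the proof of \autoref{Prop_PsiLowerUpperBounds}---and the only thing to watch is the elementary bookkeeping in this last step, noting that the exponent it produces ($2/7$) comfortably dominates the claimed $1/8$.
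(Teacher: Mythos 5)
Your argument is correct, and it reaches the stated bound by a genuinely different route at the decisive step. The paper also plays the smallness $\|\mu(\Psi)\|_{L^2(B_1)}\leq\tau^2$ of \eqref{Eq_MuL2Bound} off against a uniform regularity bound, but it does so by estimating $\|\nabla^2|\mu(\Psi)|\|_{L^2(B_{1-\delta})}\lesssim_\delta 1$ (which needs the second covariant derivative bound of \autoref{Prop_PsiW22Bound} explicitly) and then applying the second-order Gagliardo--Nirenberg inequality $\|\nabla f\|_{L^4}\lesssim\|\nabla^2 f\|_{L^2}^{7/8}\|f\|_{L^2}^{1/8}+\|f\|_{L^2}$ followed by Morrey, which yields the stronger conclusion $\||\mu(\Psi)|\|_{C^{0,1/4}(B_{1-\delta})}\lesssim_\delta\tau^{1/8}$, i.e.\ a small H\"older norm and not just a small sup. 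You instead stop at a first-order bound: from $|\nabla\mu(\Psi)|\lesssim|\nabla_{A\otimes B}\Psi||\Psi|$, \eqref{Eq_NablaPsiL4} and \autoref{Prop_PsiLowerUpperBounds} you get an $O_\delta(1)$ H\"older seminorm (not small in $\tau$), and then interpolate the sup norm between that seminorm and the small $L^2$ norm by the elementary "ball around a near-maximum" argument, obtaining $\tau^{2/7}$, which dominates $\tau^{1/8}$ once the trivial case $\tau\geq1$ is dispatched as you do. This is lighter at the final step (no second-order Gagliardo--Nirenberg applied to $\mu(\Psi)$), though you still use \autoref{Prop_PsiW22Bound} implicitly through \eqref{Eq_NablaPsiL4}; what you give up is the H\"older control of $|\mu(\Psi)|$, but the paper only ever invokes the $L^\infty$ bound (in the proof of \autoref{Prop_InteriorCurvatureBounds}), so your version suffices. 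Two cosmetic points: to quote a H\"older bound for the function $|\mu(\Psi)|$ (which is what your lower-bound-on-a-ball step actually uses) it is cleanest to apply Kato's inequality and Morrey to $|\mu(\Psi)|$ rather than to the section $\mu(\Psi)$; and \eqref{Eq_NablaPsiL4} is stated on $B_{1-\delta}$, so you should apply it with $\delta/2$ in place of $\delta$ to justify the bound on $B_{1-\delta/2}$, which only changes the $\delta$-dependence of constants.
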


\begin{proof}
  Using Kato's inequality, \autoref{Prop_PsiLowerUpperBounds} and \eqref{Eq_NablaPsiL4} we obtain
  \begin{align*}
    \|\nabla^2|\mu(\Psi)|\|_{L^2(B_{1-\delta})}
    &\lesssim
    \|\nabla_{A\otimes B}^2\Psi\|_{L^2(B_{1-\delta})}\|\Psi\|_{L^\infty(B_{1-\delta})} + \|\nabla_{A\otimes B}\Psi\|_{L^4(B_{1-\delta})}^2 \\
    &\lesssim_\delta 1.
  \end{align*}
  Hence, using the Gagliardo--Nirenberg interpolation inequality
  \begin{equation*}
    \|\nabla f\|_{L^4(B_{1-\delta})} \lesssim \|\nabla^2 f\|_{L^2(B_{1-\delta})}^{7/8}\|f\|_{L^2(B_{1-\delta})}^{1/8} + \|f\|_{L^2(B_{1-\delta})}
  \end{equation*}
  and Morrey's inequality we obtain
  \begin{equation*}
    \||\mu(\Psi)|\|_{C^{1/4}(B_{1-\delta})}
    \lesssim_{\delta} \||\mu(\Psi)|\|_{W^{1,4}(B_{1-\delta})}
    \lesssim_{\delta} \tau^{1/8}.
    \qedhere
  \end{equation*}
\end{proof}

\begin{proof}[Proof of \autoref{Prop_InteriorCurvatureBounds}]
  By a straight-forward calculation
  \begin{equation*}
    \mu(\mu(\Psi)\Psi,\Psi) = \frac12|\Psi|^2\mu(\Psi) + \mu(\Psi)\circ \mu(\Psi) - \frac12 \tr (\mu(\Psi)\circ \mu(\Psi))\,\id_\slS.  
  \end{equation*}
  Using this and the Weitzenböck formula \eqref{Eq_Weitzenbock} we get
  \begin{align*}
    \nabla^*\nabla\mu(\Psi)
    &= 2 \mu(\nabla_{A\otimes B}^*\nabla_{A\otimes B}\Psi,\Psi) - 2 \<\mu(\nabla_{A\otimes B}\Psi,\nabla_{A\otimes B}\Psi)\> \\
    &= -\(\tau^{-2}|\Psi|^2+\frac{s}{2}\)\mu(\Psi) \\
    &\quad\quad+ 2\tau^{-2}\mu(\Psi)\circ \mu(\Psi) - \tau^{-2}\tr(\mu(\Psi)\circ\mu(\Psi))\,\id_\slS \\
    &\quad\quad- 2\mu(F_B\Psi,\Psi) - 2 \<\mu(\nabla_{A\otimes B}\Psi,\nabla_{A\otimes B}\Psi)\>.
  \end{align*}
  where $\<\cdot,\cdot\>$ denotes the contraction $T^*M\otimes T^*M\to \R$.
  
  Fix a cut-off function $\chi$ which is supported in $B_{1-\delta/2}$ and is equal to one in $B_{1-\delta}$.
  Then the above yields
  \begin{align*}
    &\int \chi|\nabla\mu(\Psi)|^2 + \(\tau^{-2}|\Psi|^2+\frac{s}{2}\) \chi|\mu(\Psi)|^2 \\
    &\quad= \int 2\chi\tau^{-2}\<\mu(\Psi)\circ \mu(\Psi),\mu(\Psi)\> - 2\chi\<\mu(F_B\Psi,\Psi),\mu(\Psi)\> \\
    &\quad\quad\quad - 2\chi\<\<\mu(\nabla_{A\otimes B}\Psi,\nabla_{A\otimes B}\Psi)\>,\mu(\Psi)\> \\
    &\quad\quad\quad - \langle\nabla^{A\otimes B}_{\nabla \chi}\mu(\Psi),\mu(\Psi)\rangle
  \end{align*}
  Since $\|\mu(\Psi)\|_{L^\infty(B_{1-\delta/2})} \lesssim_\delta \tau^{1/8}$, the first term on the right hand side can be bounded by
  \begin{equation*}
    c_\delta \tau^{-2+1/8} \int \chi|\mu(\Psi)|^2.
  \end{equation*}
  Thus, using \autoref{Prop_PsiLowerUpperBounds} and~\eqref{Eq_NablaPsiL4}, for $\fe \ll_\delta 1$ and $\tau \ll_\delta 1$, we obtain 
  \begin{align*}
    \int \chi|\mu(\Psi)|^2
    &\lesssim_\delta \tau^2 \int \(|F_B||\Psi|^2+|\nabla_{A\otimes B}\Psi|^2+|\Psi||\nabla_{A\otimes B}\Psi|\)|\mu(\Psi)| \\
    &\lesssim_\delta \tau^4(\fe + \fe^{1/4}).
  \end{align*}
  This implies the assertion because $F_A = \tau^{-2}\mu(\Psi)$.
\end{proof}

\subsection{Proof of \autoref{Prop_NControlsRho}}

If the assertion does not hold, then there exist solutions $(A, \Psi, \alpha) \in  \sA(\sL) \times \Gamma\(\Hom(E,\slS\otimes \sL)\) \times (0,\pi/2]$ of \eqref{Eq_nSW} and $x \in M$ with $\rho \leq \epsilon$ and $\scN(50\rho) \leq \epsilon$ for arbitrarily small $\epsilon > 0$.
The next four steps show that this is impossible.

\setcounter{step}{0}
\begin{step}
  \label{Step_m_nu'}
  There is a point $x' \in B_{2\rho(x)}(x)$ such that
  \begin{equation*}
    \rho(x') \leq \rho(x) \qandq
    \rho(x') \leq 2 \min \set{ \rho(y) : y \in B_{\rho(x')}(x') }.
  \end{equation*}
\end{step}

Construct a sequence $x_k$ inductively. 
Set $x_0 := x$ and assume that $x_k$ has been constructed. 
If 
\begin{equation*}
  \rho(x_k) \leq 2 \min \set{ \rho(y) : y \in B_{\rho(x_k)}(x_k) },
\end{equation*}
then we set $x' := x_k$.
Otherwise we choose $x_{k+1} \in B_{\rho(x_k)}(x_k)$ such that
\begin{equation*}
  \rho(x_{k+1}) < \frac12 \rho (x_k).
\end{equation*}
By construction we have $\rho (x_{k+1})< \frac{1}{2^k}\rho(x)$. 
Since $\rho(\cdot)$ is bounded below for a fixed $(A,\Psi,\alpha)$, this sequence must terminate for some $k$.
Note that 
\begin{equation*}
  d(x,x') \leq \sum_{i=0}^k \rho(x_i) \leq 2\rho(x).
\end{equation*}

\begin{step}
  For each $y \in B_{\rho(x')}(x')$ we have $\rho(y) \lesssim \epsilon$ and $\scN_y(\rho(y)) \lesssim \epsilon$.
\end{step}

If $y \in B_{\rho(x')}(x')$, then $B_{2\rho(x')}(y) \supset B_{\rho(x')}(x')$;
hence,
\begin{equation*}
  \int\limits_{B_{2\rho(x')}(y)}|F_A|^2 \geq \frac{1}{\rho(x')} > \frac{1}{2\rho(x')}
\end{equation*}
and therefore $\rho(y) < 2\rho(x') \leq 2\rho(x) \lesssim \epsilon$.
Since $y \in B_{5\rho(x)}(x)$, we can apply \autoref{Prop_FrequencyCannotJumpToMuch} with $r = 5\rho(x)$ to deduce that $\scN_y(\rho(y))\leq e^{O(\epsilon^2)} \scN_y(25\rho(x)) + O(\epsilon^2) \lesssim \scN_{x}(50\rho(x)) + O(\epsilon^2) \lesssim \epsilon$.

\begin{step}
  There exists a finite set $\set{y_1,\ldots, y_k} \subset B_{\rho(x')}(x')$ with $k = O(1)$ such that
  \begin{equation*}
    \bigcup B_{\rho(y_i)/2}(y_i) \supset B_{\rho(x')}(x').
  \end{equation*}
\end{step}

It follows from the first step that for each $y \in B_{\rho(x')}(x')$ we have $\rho(y) \geq \frac12\rho(x')$.
This implies the existence of a finite set $\set{y_i}$ with the desired properties.

\begin{step}
  We prove the proposition.
\end{step}

By \autoref{Prop_VerySmallCurvature} and the previous steps
\begin{equation*}
  \int\limits_{B_{\rho(y_i)/2}(y_i)} |F_A|^2
  \lesssim \frac{\epsilon}{\rho(y_i)}
  \lesssim \frac{\epsilon}{\rho(x')};
\end{equation*}
hence,
\begin{equation*}
  \int\limits_{B_{\rho(x')}(x')} |F_A|^2 \lesssim \frac{\epsilon}{\rho(x')}.
\end{equation*}
If $\epsilon \ll 1$, this contradicts the definition of $\rho(x')$.
\qed


\section{Convergence on \texorpdfstring{$M\setminus Z$}{M - Z}}
\label{Sec_Convergence}

In this section we prove the following convergence result, which completes the proof of \autoref{Thm_A} (except for the statement regarding the size of $Z$).

\begin{prop}
  \label{Prop_Convergence}
  In the situation of \autoref{Thm_A} if $\limsup \alpha_i = 0$ and with  $|\Psi|$ as in \autoref{Prop_NormConvergence} after passing to a further subsequence the following hold:
  \begin{enumerate}
  \item
    There is a constant $\gamma > 0$ such that $|\Psi_i|$ converges to $|\Psi|$ in $C^{0,\gamma}$.
    In particular, the set $Z := |\Psi|^{-1}(0)$ is closed.
  \item
    There is a flat connection $A$ on $\sL|_{M\setminus Z}$ with monodromy in $\Z_2$ and $\Psi \in \Gamma\(M\setminus Z,\Hom(E, \slS\otimes \sL)\)$ such that $(A,\Psi,0)$ solves \eqref{Eq_nSW}.
    On $M\setminus Z$ up to gauge transformations $A_i$ converges weakly in $W^{1,2}_\loc$ to $A$ and $\Psi_i$ converges weakly in $W^{2,2}_\loc$ to $\Psi$.
  \end{enumerate}
\end{prop}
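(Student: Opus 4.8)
\textbf{Plan for the proof of \autoref{Prop_Convergence}.}

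The strategy is to localise: work on a small ball $B_r(x)$ around a point $x$ with $\limsup|\Psi_i|(x) > 0$ and leverage the frequency-function machinery to obtain uniform $W^{2,2}$-bounds there, then patch the resulting local limits into global objects on $M \setminus Z$.

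\emph{Step 1: $C^{0,\gamma}$-convergence of $|\Psi_i|$ and definition of $Z$.}
By \autoref{Prop_NormConvergence}, $|\Psi_i| \rightharpoonup |\Psi|$ weakly in $W^{1,2}$ after passing to a subsequence. To upgrade this, observe that \eqref{Eq_DeltaPsi} gives $\Delta |\Psi_i|^2 \le -\tfrac{s}{2}|\Psi_i|^2 - 2\langle F_B\Psi_i,\Psi_i\rangle \lesssim 1$ by \autoref{Prop_APrioriBounds}; together with the lower bound $\Delta|\Psi_i|^2 \gtrsim -|\Psi_i|^2 - |\nabla_A\Psi_i|^2$ and Kato's inequality, a De Giorgi--Nash--Moser argument would yield a uniform $C^{0,\gamma}$-bound on $|\Psi_i|$ on all of $M$. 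Actually the cleanest route is to note that $u_i := |\Psi_i|$ is a bounded sequence of $W^{1,2}$-subsolutions of a uniformly elliptic equation with bounded right-hand side, hence equi-Hölder; Arzelà--Ascoli then gives $C^{0,\gamma'}$-convergence for any $\gamma' < \gamma$ after a further subsequence, and the limit must agree with the weak $W^{1,2}$-limit $|\Psi|$. Define $Z := |\Psi|^{-1}(0)$, which is closed by continuity. This proves part (1).

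\emph{Step 2: uniform local regularity on $M \setminus Z$.}
Fix $x \in M \setminus Z$, so $|\Psi|(x) = \epsilon_x > 0$; by uniform convergence $|\Psi_i|(x) \ge \epsilon_x/2$ for $i$ large. Combining \autoref{Prop_PsiXControlsN} (applied to each $(A_i,\Psi_i,\alpha_i)$) with \autoref{Prop_NControlsRho} via \autoref{Prop_NormPsiControlsRho}, we get a radius $r = r(x) > 0$, independent of $i$, with $\rho_i(x) \ge r$; that is, $r^{1/2}\|F_{A_i}\|_{L^2(B_r(x))} \le 1$ uniformly. \autoref{Prop_PsiW22Bound} then gives $\|\nabla^2_{A_i\otimes B}\Psi_i\|_{L^2(B_{r/2}(x))} = O(1)$, and \autoref{Prop_APrioriBounds} already supplies the $L^\infty$- and $W^{1,2}$-bounds on $\Psi_i$. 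Thus on every compact $K \subset M\setminus Z$ we obtain uniform bounds $\|F_{A_i}\|_{L^2(K)} = O_K(1)$ and $\|\Psi_i\|_{W^{2,2}(K)} = O_K(1)$ (after covering $K$ by finitely many such half-balls).

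\emph{Step 3: Uhlenbeck gauge fixing and extraction of the limit.}
On each ball $B_{r/2}(x) \subset M \setminus Z$ the curvature bound $\|F_{A_i}\|_{L^2}$ small (achievable by shrinking $r$, since $\|F_{A_i}\|_{L^2(B_s)} \le s^{1/2}/\rho_i^{1/2} \to 0$ as $s\to 0$ uniformly) allows us to apply Uhlenbeck's theorem: after gauge transformations $g_i$, the connection forms $A_i - A_0$ (relative to a fixed reference connection) satisfy a Coulomb condition and are bounded in $W^{1,2}$. Passing to a subsequence (diagonal over an exhaustion of $M\setminus Z$ by such balls, then patching gauge transformations on overlaps using that the abelian structure group makes transition functions easy to control), $A_i \rightharpoonup A$ weakly in $W^{1,2}_\loc$ and $\Psi_i \rightharpoonup \Psi$ weakly in $W^{2,2}_\loc$ on $M\setminus Z$. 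The equations \eqref{Eq_nSW} pass to the limit: $\slD_{A\otimes B}\Psi = 0$ holds weakly, and since $\sin(\alpha_i)^2 \to 0$ while $\mu(\Psi_i) \to \mu(\Psi)$ in, say, $L^2_\loc$ (by Rellich, using the $W^{2,2}$-bound), we get $\cos(\alpha)^2\mu(\Psi) = 0$ with $\alpha = 0$, forcing $\mu(\Psi) = 0$ — equivalently $F_A = 0$ as a weak limit, so $A$ is flat. Finally, $|\Psi|$ is nowhere zero on $M\setminus Z$, and $\mu(\Psi) = 0$ together with a pointwise linear-algebra fact (the spinor $\Psi$ of pointwise norm $|\Psi|$ with $\Psi\Psi^* = \tfrac12|\Psi|^2\id$ forces the $\Z_2$-reduction of the structure group of $\sL$) shows that $A$ has holonomy in $\Z_2$; this is where one identifies $\sL|_{M\setminus Z}$ with the complexification of a real line bundle.

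\textbf{Main obstacle.}
The technical heart is Step 3, specifically \emph{patching the Uhlenbeck gauges into a global gauge on $M\setminus Z$} and ensuring the limiting connection genuinely has $\Z_2$-monodromy rather than merely being flat. On each small ball gauge fixing is standard, but $M\setminus Z$ need not be simply connected and its topology can be complicated (indeed $Z$ may be one-dimensional), so one must argue that the locally-defined limit gauge transformations are compatible, up to constants, on overlaps — using that $\mu(\Psi) = 0$ rigidifies $\Psi$ fibrewise and that the residual gauge freedom for a flat $\U(1)$-connection with the spinor constraint is exactly $\{\pm 1\}$. Controlling the weak convergence up to (and the non-extendability of $A$ across) $Z$, and verifying that no energy of $F_{A_i}$ concentrates on compact subsets of $M\setminus Z$, also requires care; here one leans on \autoref{Prop_VerySmallCurvature} to rule out curvature bubbling away from $Z$.
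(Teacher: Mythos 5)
Your Steps 2--3 follow the paper's route (critical-radius lower bound via \autoref{Prop_NormPsiControlsRho}, then \autoref{Prop_PsiW22Bound}, Uhlenbeck/Coulomb gauges patched as in Donaldson--Kronheimer, and the $\Z_2$-monodromy via the $\mu^{-1}(0)$ rigidification), but Step 1 contains a genuine gap. You claim that $|\Psi_i|$ is equi-H\"older because it is ``a bounded sequence of $W^{1,2}$-subsolutions of a uniformly elliptic equation with bounded right-hand side.'' This is not available: from \eqref{Eq_DeltaPsi} the right-hand side of the equation for $|\Psi_i|^2$ contains $-2|\nabla_{A_i\otimes B}\Psi_i|^2-2\tan(\alpha_i)^{-2}|\mu(\Psi_i)|^2$, which near $Z$ is only uniformly controlled in $L^1$ (via \autoref{Prop_IntegrationByParts} with $f=1$), not in any $L^p$ with $p>3/2$; and a one-sided inequality $\Delta|\Psi_i|^2\lesssim 1$ for a bounded function (a subsolution bound) gives local boundedness but not equicontinuity --- De Giorgi--Nash--Moser H\"older regularity needs two-sided control, which is exactly what degenerates as $\alpha_i\to 0$ near the zero set. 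The uniform $C^{0,\gamma}$ bound is the analytic heart of the theorem, and the paper proves it (\autoref{Prop_HolderBoundForPsi}) not by elliptic theory for $|\Psi_i|$ but by the frequency machinery: if $d(x,y)^{1/2}\leq\rho(x)/2$ one uses the $W^{2,2}$-bound on the critical-radius ball plus Sobolev/Morrey to get a $C^{0,1/4}$ estimate, while if $d(x,y)^{1/2}>\rho(x)/2$ one uses $\rho(x)\gtrsim\min\{1,|\Psi|^{1/\omega}(x)\}$ to conclude $|\Psi|(x)\lesssim d(x,y)^{\omega/2}$, so the H\"older quotient with exponent $\min\{1/4,\omega/2\}$ is bounded. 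Without some such argument your part (1) is unproved, and Step 2 (which needs $|\Psi_i|$ bounded away from zero on a fixed ball) depends on it.

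Two smaller inaccuracies in Step 3: the inequality you invoke to make $\|F_{A_i}\|_{L^2(B_s(x))}$ small by shrinking $s$ does not follow from the definition of $\rho$ (for $s\leq\rho_i$ one only gets $\|F_{A_i}\|_{L^2(B_s)}\leq\rho_i^{-1/2}$, a fixed bound); the paper instead gets the needed curvature bounds from \autoref{Prop_InteriorCurvatureBounds} together with \autoref{Prop_PsiXControlsN}, and for the abelian group $\U(1)$ no smallness is required for Coulomb gauge fixing anyway. Also, ``$\mu(\Psi)=0$, equivalently $F_A=0$ as a weak limit'' is not a deduction: the limiting equation with $\alpha=0$ only gives $\mu(\Psi)=0$ and says nothing directly about the weak limit of $F_{A_i}$; flatness and the $\Z_2$ monodromy come from the correspondence with Fueter sections (\autoref{Prop_nSW0Fueter}), i.e.\ $A$ is the pullback of the canonical flat connection on $\mu^{-1}(0)\to\mathring M_{1,n}$, which is the mechanism you gesture at but should make the actual argument.
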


To prove this we need the following result.

\begin{prop}
  \label{Prop_HolderBoundForPsi}
  There is a constant $\gamma > 0$ such that whenever $(A,\Psi,\alpha) \in \sA(\sL) \times \Gamma\(\Hom(E,\slS\otimes \sL)\) \times (0,\pi/2]$ is a solution of \eqref{Eq_nSW}, then $[|\Psi|]_{C^{0,\gamma}} = O(1)$.
\end{prop}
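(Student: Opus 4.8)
The plan is to upgrade the $W^{1,2}$-type bounds from \autoref{Prop_APrioriBounds} to a uniform $C^{0,\gamma}$ bound on $|\Psi|$ by combining the interior regularity already obtained in \autoref{Prop_PsiW22Bound} on balls of radius comparable to the critical radius $\rho$ with the lower bound $\rho(x) \gtrsim \min\{1,|\Psi|^{1/\omega}(x)\}$ from \autoref{Prop_NormPsiControlsRho}. The point is that where $|\Psi|$ is not too small we have genuine elliptic control with good scaling, and where $|\Psi|$ is small the function is automatically close to $0$, so a Hölder modulus of continuity can be pieced together. The two regimes are stitched via a standard dichotomy argument on pairs of points.

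First I would fix $x,y\in M$ and set $d:=d(x,y)$; without loss of generality $|\Psi|(x)\ge|\Psi|(y)$. \textbf{Case 1:} $|\Psi|(x)$ is small, say $|\Psi|(x)\le d^{\gamma'}$ for a suitable exponent $\gamma'>0$ to be chosen; then $|\,|\Psi|(x)-|\Psi|(y)\,|\le|\Psi|(x)\le d^{\gamma'}$ and we are done with that pair. \textbf{Case 2:} $|\Psi|(x)\ge d^{\gamma'}$. By \autoref{Prop_NormPsiControlsRho}, $\rho(x)\gtrsim|\Psi|^{1/\omega}(x)\gtrsim d^{\gamma'/\omega}$. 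Now on the ball $B_{\rho(x)/2}(x)$ \autoref{Prop_PsiW22Bound} gives $\rho(x)^{1/2}\|\nabla_{A\otimes B}^2\Psi\|_{L^2(B_{\rho(x)/2}(x))}=O(1)$, and \autoref{Prop_APrioriBounds} gives $\|\nabla_{A\otimes B}\Psi\|_{L^2(B_r(x))}=O(r^{1/2})$ together with $\|\Psi\|_{L^\infty}=O(1)$. Applying Kato's inequality to pass to $|\Psi|$, a rescaled Morrey/Sobolev embedding $W^{2,2}\hookrightarrow C^{0,1/2}$ in dimension three on the ball of radius $\rho(x)/2$ yields, after tracking the powers of $\rho(x)$ in the scaling, a bound of the form
\begin{equation*}
  [\,|\Psi|\,]_{C^{0,1/2}(B_{\rho(x)/4}(x))} \lesssim \rho(x)^{-c}
\end{equation*}
for an explicit $c>0$. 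If $d\le\rho(x)/4$, then $y\in B_{\rho(x)/4}(x)$ and hence $|\,|\Psi|(x)-|\Psi|(y)\,|\lesssim \rho(x)^{-c}d^{1/2}\lesssim d^{1/2-c\gamma'/\omega}$ using the lower bound on $\rho(x)$; if instead $d\ge\rho(x)/4$, then $d\gtrsim|\Psi|^{1/\omega}(x)\ge d^{\gamma'/\omega}$, which forces $|\Psi|(x)\lesssim d^{\omega}$ (for $\gamma'<\omega$ this is the relevant regime) and again $|\,|\Psi|(x)-|\Psi|(y)\,|\le|\Psi|(x)\lesssim d^{\omega}$. Choosing $\gamma'$ small enough that $1/2-c\gamma'/\omega>0$ and then $\gamma:=\min\{\gamma',\ 1/2-c\gamma'/\omega,\ \omega\}$ gives the uniform $C^{0,\gamma}$ bound, with constants depending only on the geometry of $M$, $E$, $B$ through $\omega$ and the constants in the cited propositions.

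The main obstacle I anticipate is bookkeeping the scaling exponents correctly: \autoref{Prop_PsiW22Bound} and \autoref{Prop_APrioriBounds} are stated in scale-invariant form with several terms carrying different powers of the radius, and when one localises to the ball $B_{\rho/2}(x)$ and rescales to unit size one must be careful that the curvature term $r^{1/2}\|F_A\|_{L^2(B_r)}\|\Psi\|_{L^\infty}$ is indeed $O(1)$ there — which is exactly the content of the definition of $\rho$ together with the $L^\infty$ bound on $\Psi$ — so that the rescaled $W^{2,2}$ norm of $\Psi$ on the unit ball is $O(1)$. The other delicate point is that the embedding constant for $W^{2,2}\hookrightarrow C^{0,1/2}$ on a ball degenerates as the radius shrinks, which is precisely why the final exponent $\gamma$ must be taken strictly smaller than $1/2$; the lower bound $\rho(x)\gtrsim|\Psi|^{1/\omega}(x)$ is what keeps this degeneration under control and feeds back into $\gamma$ through $\omega$.
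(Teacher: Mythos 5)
Your proposal is correct and follows essentially the same route as the paper: combine the lower bound $\rho(x)\gtrsim\min\{1,|\Psi|^{1/\omega}(x)\}$ from \autoref{Prop_NormPsiControlsRho} with the interior estimate of \autoref{Prop_PsiW22Bound}, Kato's inequality and Sobolev/Morrey on $B_{\rho(x)/2}(x)$, and close with a two-case argument trading powers of $d(x,y)$ against powers of $\rho(x)$. The only difference is cosmetic: the paper splits on $d(x,y)^{1/2}\lessgtr\rho(x)/2$ (yielding $\gamma=\min\{1/4,\omega/2\}$) while you split on $|\Psi|(x)\lessgtr d^{\gamma'}$ and then on $d\lessgtr\rho(x)/4$, which produces the same kind of exponent.
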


\begin{proof}
  Let $x \neq y\in M$.
  We need to uniformly control
  \begin{equation*}
    \frac{||\Psi|(x)-|\Psi|(y)|}{d(x,y)^{\gamma}}
  \end{equation*}
  for some $\gamma > 0$.
  Take $\omega > 0$ as in \autoref{Prop_NormPsiControlsRho}.
  Without loss of generality we can assume that $d(x,y) \leq \omega$ and $0 \neq \nu :=|\Psi|(x) \geq |\Psi|(y)$.
  It follows from \autoref{Prop_NormPsiControlsRho} that
  \begin{equation}
    \label{Eq_RhoLowerBoundNu}
    \rho(x) \gtrsim \min\set{1,\nu^{1/\omega}}.
  \end{equation}

  We distinguish two cases.
  \setcounter{case}{0}
  \begin{case}
    \label{Case_SqrtDLeqRho}
    $d(x,y)^{1/2} \leq \rho(x)/2$.
  \end{case}
  By combining \autoref{Prop_PsiW22Bound} with Sobolev embedding, Morrey's inequality with Kato's inequality we obtain
  \begin{equation*}
    \frac{||\Psi|(x)-|\Psi|(y)|}{d(x,y)^{1/2}}
    \lesssim \|\nabla_{A\otimes B}\Psi\|_{L^6(B_{\rho(x)/2})}
    \lesssim \rho(x)^{-1/2} \lesssim d(x,y)^{-1/4};
  \end{equation*}
  hence,
  \begin{equation*}
    \frac{||\Psi|(x)-|\Psi|(y)|}{d(x,y)^{1/4}} = O(1).
  \end{equation*}

  \begin{case}
    $d(x,y)^{1/2} > \rho(x)/2$.
  \end{case}

  If $\nu \geq 1$, then by \eqref{Eq_RhoLowerBoundNu} we are in \autoref{Case_SqrtDLeqRho}.
  Thus $\nu < 1$ and it follows from \eqref{Eq_RhoLowerBoundNu} that
  \begin{equation*}
    |\Psi|(y) \leq |\Psi|(x) \lesssim \rho(x)^\omega \lesssim d(x,y)^{\omega/2};
  \end{equation*}  
  hence,
  \begin{equation*}
    \frac{||\Psi|(y) - |\Psi|(x)|}{d(x,y)^{\omega/2}} = O(1).
  \end{equation*}

  This proves the proposition with $\gamma := \min\set{\frac14,\frac\omega 2}$.
\end{proof}

\begin{proof}[Proof of \autoref{Prop_Convergence}]  
  \autoref{Prop_HolderBoundForPsi} immediately implies the first part of the proposition.
  We prove the second part.
  If $x \in M\setminus Z$, then, by \autoref{Prop_NormPsiControlsRho}, after passing to a subsequence the critical radius $\rho_{i}(x)$ of $(A_i,\Psi_i,\alpha_i)$ is bounded below by a constant, say, $2R > 0$ depending only on $|\Psi|(x)$.
  By \autoref{Prop_HolderBoundForPsi} we can also assume that $|\Psi_i|$ is bounded away from zero on $B_{2R}(x)$, after possibly making $R$ smaller.
  Combining
  \autoref{Prop_InteriorCurvatureBounds} and \autoref{Prop_PsiXControlsN} yields $L^2$--bounds on $F_{A_i}$ on balls covering $B_R(x)$; hence, by \autoref{Prop_PsiW22Bound}, $W^{2,2}_{A_i}$--bounds on $\Psi_i$.
  After putting $A_i$ in Uhlenbeck gauge on $B_R(x)$ and passing to a subsequence the sequence $(A_i, \Psi_i)$ converges weakly in $W^{1,2} \oplus W^{2,2}$ to a limit $(A, \Psi)$.
  The pair $(A, \Psi)$ satisfies
  \begin{equation*}
    \slD_{A\otimes B}\Psi = 0 \qandq \mu(\Psi) = 0.
  \end{equation*}

  The local gauge transformations can be patched to obtain a global gauge transformation on $M\setminus Z$, see~\cite{Donaldson1990}*{Section 4.2.2}.

  The fact that $A$ has monodromy in $\Z_2$ follows from the discussion in \autoref{App_Fueter}.
\end{proof}


\section{Z is nowhere-dense}
\label{Sec_LimitAnalysis}

Since $\int_M |\Psi|^2 = 1$, we know that $Z$ cannot be the entire space.
To obtain more precise information on $Z$ it turns out to be helpful to apply the ideas from \autoref{Sec_Frequency} to the limit $(A,\Psi)$.
Fix $x \in M$ and define functions $H,h \co [0,r_0] \to [0,\infty)$ by 
\begin{align*}
  H(r) &:= \int_{B_r(x)} |\nabla_{A\otimes B} \Psi|^2 \qand \\
  h(r) &:= \int_{\del B_r(x)} |\Psi|^2.
\end{align*}
Here we extend $|\nabla_{A\otimes B} \Psi|$ by defining it to be zero on $Z$.
If $h(r) > 0$, define
\begin{equation*}
  \scN(r):=\frac{rH(r)}{h(r)}.
\end{equation*}

\begin{prop}
  \label{Prop_UniformConvergenceOfhiHi}
  Denote by $h_i$, $H_i$ the $(A_i,\Psi_i,\alpha_i)$ version of $h$ and $H$ defined in \autoref{Def_Frequency}.
  The sequences of functions $h_i$ and $H_i$ converge uniformly to $h$ and $H$, respectively.
  In particular, $\scN_i(r) \to \scN(r)$ whenever $h(r) > 0$.
\end{prop}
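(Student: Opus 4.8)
The plan is to establish the uniform convergence $h_i \to h$ and $H_i \to H$ on $[0, r_0]$ in two stages: first obtain enough compactness to know the sequences subconverge, then identify the limits with the functions $h$ and $H$ defined via the limit $(A,\Psi)$ on $M \setminus Z$.

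For the $h_i$: recall that $h_i(r) = \int_{\del B_r(x)} |\Psi_i|^2$, and $|\Psi_i| \to |\Psi|$ in $C^{0,\gamma}(M)$ by \autoref{Prop_HolderBoundForPsi} (as used in \autoref{Prop_Convergence}). Since convergence in $C^0(M)$ is uniform, $\int_{\del B_r(x)} |\Psi_i|^2 \to \int_{\del B_r(x)} |\Psi|^2$ uniformly in $r$: indeed $\big|h_i(r) - h(r)\big| \le \|\,|\Psi_i|^2 - |\Psi|^2\,\|_{L^\infty(M)} \cdot |\del B_r(x)| \lesssim r^2 \|\,|\Psi_i| - |\Psi|\,\|_{L^\infty} \big(\|\Psi_i\|_{L^\infty} + \|\Psi\|_{L^\infty}\big) \to 0$, using the uniform $L^\infty$ bound from \autoref{Prop_APrioriBounds}. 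This handles $h_i \to h$ uniformly and is the easy half.

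For the $H_i$, first note $H_i(r) = \int_{B_r(x)} |\nabla_{A_i \otimes B}\Psi_i|^2 + \tan(\alpha_i)^{-2}|\mu(\Psi_i)|^2$, and the integrands are uniformly bounded in $L^1$ by $O(1)$ (integrate \eqref{Eq_DeltaPsi} over $M$, or invoke \autoref{Prop_APrioriBounds} together with \autoref{Prop_IntegrationByParts} with $f=1$). Hence each $H_i$ is monotone nondecreasing with $H_i(r_0) = O(1)$, and moreover $H_i$ is equi-absolutely-continuous since for $s < r$, $H_i(r) - H_i(s) = \int_{B_r(x)\setminus B_s(x)} (\cdots) \lesssim r - s$ — wait, more carefully: from \eqref{Eq_DeltaPsi} the integrand's integral over an annulus is controlled by $\int_{B_r \setminus B_s}\big(\tfrac{s}{4}|\Psi_i|^2 + \langle F_B\Psi_i,\Psi_i\rangle\big) + \text{boundary terms}$, which is $O(r-s) + \big(\text{boundary flux terms}\big)$; combined with the frequency bounds (\autoref{Prop_NMaxGrowth}, \autoref{Prop_hIntegralFormula}) one gets a genuine modulus of continuity uniform in $i$. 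Then Arzelà–Ascoli gives a uniformly convergent subsequence $H_i \to \tilde H$ for some continuous nondecreasing $\tilde H$. It remains to show $\tilde H = H$, i.e. that the limit equals $\int_{B_r(x)}|\nabla_{A\otimes B}\Psi|^2$ with the limiting data. This is where the weak convergence from \autoref{Prop_Convergence} enters: on any compact $K \subset M \setminus Z$, $\Psi_i \to \Psi$ weakly in $W^{2,2}_{\loc}$ and $A_i \to A$ weakly in $W^{1,2}_{\loc}$ (after gauge fixing), so $\nabla_{A_i\otimes B}\Psi_i \to \nabla_{A\otimes B}\Psi$ weakly in $W^{1,2}_{\loc}$, hence strongly in $L^2_{\loc}(M\setminus Z)$ by Rellich; thus $\int_{B_r(x)\cap K}|\nabla_{A_i\otimes B}\Psi_i|^2 \to \int_{B_r(x)\cap K}|\nabla_{A\otimes B}\Psi|^2$. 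Meanwhile $\tan(\alpha_i)^{-2}|\mu(\Psi_i)|^2 = \tan(\alpha_i)^2|F_{A_i}|^2 \to 0$ in $L^1_{\loc}(M \setminus Z)$ since $F_{A_i}$ is $L^2_{\loc}$-bounded there and $\alpha_i \to 0$. The only thing left is to rule out concentration of the energy density near $Z$: since $Z$ has measure zero (indeed $\int_M |\Psi|^2 = 1$ forces $Z \ne M$, and the Hölder control plus \autoref{Prop_NControlsRho}/\autoref{Prop_PsiXControlsN} machinery shows $Z$ is nowhere dense), and since $\limsup_i \int_{B_r(x)}|\nabla_{A_i\otimes B}\Psi_i|^2 + \tan(\alpha_i)^{-2}|\mu(\Psi_i)|^2$ is bounded, one shows the contribution from an $\epsilon$-neighbourhood $Z_\epsilon \cap B_r(x)$ tends to $0$ uniformly in $i$ as $\epsilon \to 0$ — this follows from the frequency almost-monotonicity applied at points of $Z$ (a point $x \in Z$ has $|\Psi|(x) = 0$, so the Hölder bound forces $h_i$ small near such points, and \autoref{Cor_hGrowth} plus the bound $H_i(r) = \scN_i(r) h_i(r)/r$ with $\scN_i(r) = O(1)$ on the relevant scales gives $H_i$ small there). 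Combining, $\tilde H(r) = H(r)$ for all $r$ with $h(r) > 0$, and since the subsequential limit is independent of the subsequence the whole sequence converges; the final claim $\scN_i(r) \to \scN(r)$ when $h(r) > 0$ is then immediate from $\scN_i = rH_i/h_i$ and $h(r) > 0 \Rightarrow h_i(r)$ bounded away from $0$ for large $i$.

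The main obstacle is precisely the last point: controlling the energy $H_i$ near the bad set $Z$ uniformly in $i$, i.e. showing no energy concentrates there. The weak-$W^{2,2}$ convergence is only $\loc$ on $M \setminus Z$, so to get convergence of the \emph{full} integral $\int_{B_r(x)}$ one genuinely needs an a priori statement that the energy is not concentrating on $Z$ — and the natural tool is the frequency function, whose almost-monotonicity (\autoref{Prop_NDerivativeLowerBound}) and the growth estimates of \autoref{Prop_NControlsGrowthOfh} bound $H_i$ on small balls centred near $Z$ in terms of $h_i$ there, which the uniform Hölder convergence of $|\Psi_i|$ controls. One should be a little careful that these estimates are applied at base-points $y$ ranging over a neighbourhood of $Z$ (using \autoref{Prop_FrequencyCannotJumpToMuch} for the base-point dependence), but no new ideas beyond Sections 3–5 are needed.
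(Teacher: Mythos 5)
Your handling of $h_i$ and of the identification of the limit away from $Z$ is fine and matches the paper (uniform $C^{0,\gamma}$ convergence for $h_i$; strong $L^2_{\loc}$ convergence of $\nabla_{A_i\otimes B}\Psi_i$ and $\tan(\alpha_i)^2|F_{A_i}|^2\to 0$ on compact subsets of $M\setminus Z$). The genuine gap is in the step you yourself single out as the main obstacle: ruling out energy concentration on $Z$ uniformly in $i$. You propose to bound $H_i$ on small balls centred near $Z$ by writing $H_{i,y}(r)=\scN_{i,y}(r)h_{i,y}(r)/r$ and asserting that ``$\scN_i(r)=O(1)$ on the relevant scales''. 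No such bound exists in the paper, nor should one expect it: the only upper bound on the frequency, \autoref{Prop_PsiXControlsN}, requires $|\Psi|$ to be bounded \emph{below} at the base point, and \autoref{Prop_FrequencyCannotJumpToMuch} merely propagates a bound you already possess at a nearby base point. At points of (or near) $Z$ the frequency is precisely the quantity expected to be large --- high vanishing order is what creates the zero set --- so the smallness of $h_{i,y}$ near $Z$ (which you do get from the Hölder convergence) buys nothing: a priori one only knows $H_{i,y}(r)\le H_{i,y}(r_0)=O(1)$, which is not small. \autoref{Cor_hGrowth} relates $h$ at different radii and gives no frequency bound either. So the central mechanism of your proof fails as stated; the secondary Arzelà--Ascoli/equicontinuity discussion is also left vague, but it is not needed once the concentration issue is settled.

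The paper's actual argument for this step is independent of the frequency function. One tests the Weitzenböck identity, in the form of \autoref{Prop_IntegrationByParts}, with $U=M$ and $f=\chi(\epsilon^{-1}|\Psi_i|)$, a cutoff of the sublevel set of $|\Psi_i|$ (not of the distance to $Z$); after integrating the Laplacian term by parts once and using Kato's inequality, the quantity $f(\epsilon):=\int_{Z_\epsilon}|\nabla_{A_i\otimes B}\Psi_i|^2+\tan(\alpha_i)^{-2}|\mu(\Psi_i)|^2$ satisfies the self-improving inequality $f(\epsilon)\le\sigma\(\epsilon^2+f(2\epsilon)\)$ with a fixed $\sigma=c/(1+c)<1$. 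Iterating over dyadic scales up to $2^k\epsilon\sim 1$ yields $f(\epsilon)=O(\epsilon^\lambda)$ for some $\lambda>0$ \emph{uniformly in} $i$, which is exactly the no-concentration statement you need. With that in hand, the uniform convergence of $H_i$ follows from the decomposition of $B_r(x)$ into $Z_\epsilon$ and its complement (your Rellich argument covers the complement, monotone convergence handles $|H_\epsilon-H|$), and the final claim about $\scN_i(r)\to\scN(r)$ when $h(r)>0$ is then immediate, as you say. To repair your proposal you would need to replace the frequency-based control near $Z$ with an estimate of this kind.
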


Let us first explain how this implies the following.

\begin{prop}
  \label{Prop_ZIsNowhereDense}
  $Z$ is nowhere-dense.
\end{prop}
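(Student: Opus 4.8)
The plan is to argue by contradiction, exploiting the frequency function for the limit $(A,\Psi)$ together with the uniform convergence $\scN_i \to \scN$ from \autoref{Prop_UniformConvergenceOfhiHi}. Suppose $Z$ were not nowhere-dense; then $Z$ contains a ball $B_\rho(x_0)$. Since $|\Psi| \equiv 0$ on this ball, all the ``mass'' functions $h(r)$ and $H(r)$ based at $x_0$ vanish for $r \le \rho$. The key point is that the frequency function for the limit should still be almost-monotone and should control the growth of $h$ exactly as in \autoref{Sec_Frequency} (the estimates there are proven for the blown-up equation, but passing to the limit $\alpha \to 0$ only drops the $\tan(\alpha)^{-2}|\mu(\Psi)|^2$ terms in favour of $\mu(\Psi) = 0$, which is harmless). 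In particular, the analogue of \autoref{Cor_hGrowth}, namely $h(s) \lesssim (s/r)^2 h(r)$ for $s < r$, would force $h$ to vanish identically on $[0,r_0]$ once it vanishes on an interval near $0$ --- wait, one needs monotonicity in the right direction. The correct mechanism: if $h \equiv 0$ on $(0,\rho]$ then $\scN$ is not even defined there, but the frequency growth estimate \autoref{Prop_NControlsGrowthOfh} applied in the form $h(s) \ge e^{O(r^2)}(s/r)^{e^{O(r^2)}(2+2\scN(r))} h(r)$ would let one propagate vanishing of $h$ at a small radius to vanishing at all larger radii, contradicting $\int_M |\Psi|^2 = 1$.

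Concretely, here is how I would run it. First I would establish that the limit frequency function $\scN$ satisfies the same structural results as in \autoref{Sec_Frequency}: almost monotonicity (\autoref{Prop_NDerivativeLowerBound}), the growth formula for $h$ (\autoref{Prop_hIntegralFormula} and \autoref{Prop_NControlsGrowthOfh}), and the key consequence $|\Psi|^2(x) \lesssim h(r)/r^2$ (\autoref{Cor_hGrowth}). These either transfer verbatim --- replacing $|\nabla_{A\otimes B}\Psi|^2 + \tan(\alpha)^{-2}|\mu(\Psi)|^2$ by $|\nabla_{A\otimes B}\Psi|^2$ and using $\mu(\Psi)=0$, $\slD_{A\otimes B}\Psi = 0$ on $M\setminus Z$ --- or follow from \autoref{Prop_UniformConvergenceOfhiHi} by taking limits in the corresponding inequalities for $(A_i,\Psi_i,\alpha_i)$ wherever $h(r) > 0$. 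Second, suppose for contradiction that $Z$ has nonempty interior, so there is a ball $B_{2\rho}(x_0) \subset Z$. Then for the base-point $x_0$ we have $h_{x_0}(r) = 0$ for all $r \le 2\rho$, since $|\Psi|$ vanishes on $B_{2\rho}(x_0)$. Third, pick any point $x_1 \in M\setminus Z$ (which exists since $\int_M|\Psi|^2 = 1 > 0$), so $h_{x_1}(r) > 0$ for $r$ small; by \autoref{Cor_hGrowth} applied at $x_1$, $h_{x_1}(r) > 0$ for all $r \le r_0$, and hence $\int_{B_r(x_1)}|\Psi|^2 > 0$ for all such $r$. Fourth --- and this is where the contradiction crystallises --- I would use the base-point dependence estimate \autoref{Prop_hDependenceOnBasePoint} in the limiting form: $h_{x_1}(r) \lesssim \frac{2r+d(x_0,x_1)}{r} h_{x_0}(2r+d(x_0,x_1))$ is the wrong direction; instead use it in the form relating $h_{x_0}$ at a large radius to $h_{x_1}$ at a nearby small one. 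Since $h_{x_0}(R) = 0$ for $R \le 2\rho$ but by \autoref{Cor_hGrowth} (at $x_0$) vanishing at small radius propagates... no.

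Let me restate the cleanest route. The real engine is: if $h_{x_0}(r_*) = 0$ for some $r_* > 0$, then by the lower bound in \autoref{Prop_NControlsGrowthOfh}, $h_{x_0}(s) \ge e^{O(r_*^2)}(s/r_*)^{e^{O(r_*^2)}(2+2\scN(r_*))} h_{x_0}(r_*) = 0$ requires $h_{x_0}(r_*) > 0$ to say anything --- so that is also the wrong direction. The right statement is \autoref{Cor_hGrowth}: $h(s) \lesssim (s/r)^2 h(r)$, equivalently $h(r) \gtrsim (r/s)^2 h(s)$. This says small-radius mass controls large-radius mass from below, so vanishing at a \emph{large} radius forces vanishing at small radii --- still not directly what I want. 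So the genuinely useful input is monotonicity the other way: I claim $h_{x_0}(r) = 0$ for $r \le 2\rho$ forces $\scN_{x_0}$ to blow up, and via \autoref{Prop_hIntegralFormula}, $h_{x_0}(r) = e^{O(r^2)}(r/s)^2 \exp(2\int_s^r \scN_{x_0}(t)/t\,dt)\, h_{x_0}(s)$: if $h_{x_0}(s) > 0$ for some $s$ slightly above $2\rho$ but $h_{x_0}(2\rho) = 0$, the integral $\int_{2\rho}^s \scN_{x_0}(t)/t\,dt$ would have to be $+\infty$, i.e.\ $\scN_{x_0}(t) \to \infty$ as $t \downarrow 2\rho^+$; but then \autoref{Prop_NMaxGrowth} (almost monotonicity, valid for the limit) shows $\scN_{x_0}(t)$ cannot be finite for $t$ slightly above $2\rho$ either, so in fact $h_{x_0}(s) = 0$ for all $s \in (0, r_0]$, whence $\int_M |\Psi|^2 = \int_0^{r_0} h_{x_0}(r)\,dr\cdot(1+O(r^2)) = 0$ --- wait, that integral is over $B_{r_0}(x_0)$ not $M$. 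To cover $M$, combine with covering $M$ by finitely many such balls, or simply note $Z$ open and closed (it is closed by part (1) of \autoref{Prop_Convergence}) would make $Z = M$ by connectedness, contradicting $\int_M|\Psi|^2 = 1$. So the cleanest finish: show $Z$ has empty interior by showing that if $B_{2\rho}(x_0) \subset Z$ then $h_{x_0} \equiv 0$ (via the frequency blow-up argument above), hence $|\Psi| \equiv 0$ on $B_{r_0}(x_0)$, so the interior of $Z$ is open \emph{and} its closure-complement is open, i.e.\ "$x_0 \in \mathrm{int}(Z)$" propagates to a neighbourhood of radius $r_0$; iterating and using connectedness of $M$ gives $Z = M$, the desired contradiction. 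The main obstacle I anticipate is making the limit frequency function rigorous at radii where $h(r) = 0$: the quotient $\scN = rH/h$ is undefined there, so the monotonicity/growth machinery has to be set up carefully on the open set where $h > 0$ and then one argues that this set, as a subset of $(0,r_0]$, is either empty or all of $(0,r_0]$ --- which is exactly the propagation-of-vanishing statement, and is presumably what \autoref{Prop_UniformConvergenceOfhiHi} is designed to feed into. I would expect the author to deduce it directly from uniform convergence $\scN_i \to \scN$ plus the uniform-in-$i$ frequency bounds, rather than redeveloping the whole theory for the limit.
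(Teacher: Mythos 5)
Your final argument is essentially the paper's own proof: the radius just beyond a putative ball in $Z$ where $h>0$ makes the limit frequency defined and finite via \autoref{Prop_UniformConvergenceOfhiHi}, and passing the growth bound \autoref{Prop_NControlsGrowthOfh} for the approximants $(A_i,\Psi_i,\alpha_i)$ to the limit propagates positivity of $h$ to all smaller radii, contradicting $h\equiv 0$ inside the ball. The only differences are cosmetic: the paper finishes by choosing a maximal ball $B_R(x)\subset Z$, moving $x$ close to its boundary, and concluding $R=0$, rather than your open-and-closed/connectedness argument, and it works exclusively with the approximants' inequalities plus uniform convergence (the route you correctly anticipated), avoiding any re-derivation of the frequency identities for the limit across $Z$.
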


\begin{proof}
  Choose $R \geq 0$ as large as possible, but so that $B_R(x) \subset Z$.
  We know that $R$ is finite, because $Z$ is compact.
  By replacing $x$ with a point close to the boundary of $B_R(x)$ we can assume that $R \ll 1$.
  By construction of $R$ there is an $\epsilon \ll 1$ such that $h(R+\epsilon) > 0$.
  In particular, $\scN(R+\epsilon)$ is defined.
  It follows from \autoref{Prop_NControlsGrowthOfh} and \autoref{Prop_UniformConvergenceOfhiHi} that $R=0$.
\end{proof}

\begin{proof}[Proof of \autoref{Prop_UniformConvergenceOfhiHi}]
  That $h_i$ converges uniformly to $h$ is a direct consequence of the $C^{0,\gamma}$ convergence of $|\Psi_i|$.
  The proof of the corresponding statement for $H_i$ has three steps.

  \setcounter{step}{0}
  \begin{step}
    \label{Step_UniformConvergenceofHepsiloni}
    For $\epsilon \in (0,1/2]$ set $Z_\epsilon := |\Psi|^{-1}([0,\epsilon])$.
    The sequence of functions
    \begin{equation*}
      H_{\epsilon,i}(r) := \int_{B_r(x) \setminus Z_\epsilon} |\nabla_{A_i\otimes B} \Psi_i|^2 + \tan(\alpha_i)^{-2} |\mu(\Psi_i)|^2
    \end{equation*}
    converges uniformly to
    \begin{equation*}
      H_{\epsilon}(r) := \int_{B_r \setminus Z_\epsilon} |\nabla_{A\otimes B} \Psi|^2.
    \end{equation*}
  \end{step}

  This follows from the facts that $\tan(\alpha_i)^{-1}\mu(\Psi_i) = \tan(\alpha_i)F_{A_i}$ converges to zero in $L^2(M \setminus Z_\epsilon)$ and $\nabla_{A_i\otimes B}\Psi_i$ converges to $\nabla_{A\otimes B} \Psi$ in $L^2(M \setminus Z_\epsilon)$, see \autoref{Prop_Convergence}.

  \begin{step}
    \label{Step_HiMinusHepsiloni}
    There exists a $\lambda > 0$ such that
    \begin{equation*}
    \int_{Z_\epsilon} |\nabla_{A_i\otimes B} \Psi_i|^2 + \tan(\alpha_i)^{-2}|\mu(\Psi_i)|^2 = O(\epsilon^\lambda).
  \end{equation*}
  \end{step}

  Fix a cut-off function $\chi\co \R \to [0,1]$ with $\chi(t)=1$ for $t\leq 1$ and $\chi(t)=0$ for $t\geq 2$.
  Applying \autoref{Prop_IntegrationByParts} with $f=\chi(\epsilon^{-1}|\Psi_i|)$ and $U=M$, integrating the resulting term with $\Delta|\Psi|$ by parts once and using Kato's inequality yields
  \begin{align*}
    \int_{Z_\epsilon} |\nabla_{A_i\otimes B}\Psi_i|^2 + \tan(\alpha_i)^{-2}|\mu(\Psi_i)|^2 \leq c\epsilon^2 + c \int_{Z_{2\epsilon}\setminus Z_{\epsilon}} |\nabla_{A_i\otimes B}\Psi_i|^2.
  \end{align*}
  Denoting
  \begin{equation*}
    f(\epsilon) := \int_{Z_\epsilon} |\nabla_{A_i\otimes B}\Psi_i|^2 + \tan(\alpha_i)^{-2}|\mu(\Psi_i)|^2
  \end{equation*}
  this can be written as
  \begin{equation*}
    f(\epsilon) \leq \sigma (\epsilon^2 + f(2\epsilon))
  \end{equation*}
  with $\sigma := c/(1+c)$.
  Since $f$ is bounded above and we can assume that $\sigma \geq 1/2$,
  \begin{align*}
    f(\epsilon)
    &\leq \sigma\epsilon^2 \sum_{i=0}^{k-1} (4\sigma)^i + \sigma^kf(2^k\epsilon) \\
    &\leq \epsilon^2 \sigma\(\frac{(4\sigma)^{k-1}-1}{4\sigma-1}\) + c\sigma^k \\
    &\lesssim \epsilon^2(4\sigma)^k + \sigma^k.
  \end{align*}
  With $k := \floor{-\log \epsilon/\log 2}$ this gives
  \begin{align*}
    f(\epsilon) \lesssim \epsilon^{2-\log(4\sigma)/\log 2} + \epsilon^{-\log\sigma/\log 2} \lesssim \epsilon^\lambda
  \end{align*}
  for some $\lambda>0$ depending on $\sigma$ only, since $\log(4\sigma)/\log 2 < 2$.

  \begin{step}
  The sequence of functions $H_i$ converges uniformly to $H$. 
  \end{step}

  Both $|H_{\epsilon}(r)-H(r)|$ and $|H_{\epsilon,i}(r)-H_i(r)|$ converge uniformly to zero as $\epsilon$ goes to zero, the former by monotone convergence and the latter by \autoref{Step_HiMinusHepsiloni};
  hence, the desired convergence follows immediately from \autoref{Step_UniformConvergenceofHepsiloni}.
\end{proof}

\appendix
\appendix
\section{Fueter sections of bundles of moduli spaces of ASD instantons}
\label{App_Fueter}

Recall from \cite{Donaldson1990}*{Section 3.3} that if $E$ denotes a Hermitian vector space of dimension $n$ with fixed determinant, $\slS^+$ denotes the positive spin representation of $\Spin(4)$ and $\sL$ is a Hermitian vector space of dimension one, then
\begin{equation*}
  \bigl(\Hom(E,\slS^+\otimes \sL) \setminus \set{0}\bigr)\!\hkred \U(1)
  = \mathring M_{1,n}
\end{equation*}
the moduli space of centred framed charge one $\SU(n)$ ASD instantons on $\R^4$.

In the situation of \autoref{Sec_Introduction} we have bundles of the above data (which we denote by the same letters) and can construct the bundle
\begin{equation*}
  \fM := (\fs \times \SU(E)) \times_{\Spin(3)\times \SU(n)} \mathring M_{1,n}.
\end{equation*}
Here $\SU(E)$ is the principal $\SU(n)$--bundle of oriented orthonormal frames of $E$ and $\Spin(3)$ acts via the inclusion of the first factor in $\Spin(4) = \Spin_+(3)\times\Spin_-(3)$.
Using the connections on $\fs$ and $E$ we can associate to every section $\fI \in \Gamma(\fM)$ its covariant derivative $\nabla \fI \in \Omega^1(\fI^*V\fM)$.
Here $V\fM := (\fs \times \SU(E)) \times_{\Spin(3)\times \SU(n)} T\mathring M_{1,n}$ is the vertical tangent bundle of $\fM$.
Moreover, there is a Clifford multiplication $\gamma\colon TM\otimes\fI^*V\fM\to \fI^*V\fM$.
Therefore, there is a natural non-linear Dirac operator $\fF$, called the \emph{Fueter operator}, which assigns to a section $\fI \in \Gamma(\fM)$ the vertical vector field
\begin{equation*}
  \fF\fI := \sum_{i=1}^3 \gamma(e_i) \nabla_{e_i}\fI \in \Gamma(\fI^*V\fM).
\end{equation*}

\begin{prop}
  \label{Prop_nSW0Fueter}
  If $A \in \sA(\sL)$ and $\Psi \in \Gamma(M, \Hom(E,\slS \otimes \sL))$ is a solution of
  \begin{equation}
    \label{Eq_nSW0}
    \begin{split}
      \slD_{A\otimes B} \Psi
      &=
        0 \qand \\
      \mu(\Psi)
      &=
        0.
    \end{split}
  \end{equation}
  and $\Psi$ vanishes nowhere, then the induced section $\fI \in \Gamma(\fM)$ solves $\fF\fI = 0$.
  Conversely, each Fueter section $\fI \in \Gamma(\fM)$ lifts to a solution $(A,\Psi)$ of \eqref{Eq_nSW0} for some $\sL$.
\end{prop}

The proof is essentially the same as that of~\cite{Haydys2011}*{Proposition 4.1}.
It is worthwhile to explain how $\sL$ and $A$ are recovered from $\fI$:
the $\U(1)$--bundle $\mu^{-1}(0) \to \mathring M_{1,n}$ has a canonical connection given by orthogonal projection along the $\U(1)$--orbits;
hence, the $\U(1)$--bundle $\underline\sL := (\fs\times \SU (E)) \times_{\Spin(3)\times \SU(n)} \mu^{-1}(0) \to \fM$ inherits a connection $\underline A$; and, finally, $\sL$ and $A$ are obtained via pullback:
\begin{equation*}
  \sL = \fI^*\underline\sL \qandq A = \fI^*\underline A.
\end{equation*}

The following gives more information about $A$.

\begin{prop}
  \label{Prop_nSW0FueterConnection}
  Let $A \in \sA(\sL)$ and $\Psi \in \Gamma(M, \Hom(E,\slS \otimes \sL))$ be a solution of \eqref{Eq_nSW0}.
  Denote $Z := \Psi^{-1}(0)$.
  In this situation the following hold true:
  \begin{enumerate}[(1)]
  \item
    \label{It_Rank2}
    $F := \coim(\Psi) = \ker (\Psi)^\perp$ is a rank $2$ subbundle of $E|_{M\setminus Z}$;
  \item
    \label{It_Sqrt}
    The bundle $\sK := \det F$ has a square root $\sqrt{\sK}$.
    In particular, $\mathring{F} := F \otimes \sK^{-1/2}$ is an $\SU(2)$--bundle.
  \item
    \label{It_nSW0}
    The connection induced on $\sL|_{M\setminus Z} \otimes \sK^{1/2}$ and the induced section $\Phi \in \Gamma(M\setminus Z, \Hom(\mathring{F},\slS \otimes \sL \otimes \sK^{1/2}))$ satisfy \eqref{Eq_nSW0} over $M\setminus Z$ with respect to the induced connection on $\mathring{F}$.
    We have $|\Phi|=|\Psi|$ over $M\setminus Z$ and, hence, $|\Phi|$ extends as a continuous function over $M$ and $Z = |\Phi|^{-1}(0)$.
  \item
    \label{It_Flat}
    The induced connection on $\sL \otimes \sK^{1/2}$ is flat and has $\Z_2$--monodromy.
  \end{enumerate}
\end{prop}

\begin{proof}
  For each $x \in M$,
  $\mu^{-1}(0)\setminus \set{0} \subset \Hom(E,\slS \otimes \sL))_x$ is acted upon transitively by $\R_+ \times \U(E_x)$.
  In particular, it can be checked directly that for one (hence for all) non-zero $\Psi \in \mu^{-1}(0)$ we have $\rk \Psi = 2$.

  The induced section $\Phi \in \Gamma(M, \Hom(F,\slS \otimes \sL))$ defines an isomorphism $F \iso \slS \otimes \sL$,
  hence $\det F \iso \det(\slS \otimes \sL) \iso \sL^{\otimes 2}$.
  This implies \eqref{It_Sqrt}.

  The assertion made in \eqref{It_nSW0} is a consequence of $(A,\Psi)$ satisfying \eqref{Eq_nSW0}.  
  Thus we are left with proving \eqref{It_Flat} in the case $n = 2$.
  In this case, $F = E$, $\sK$ is trivial and $\Phi = \Psi$.
  To see that $A$ is flat with monodromy in $\Z_2$ note that the same is true for the canonical connection on $\mu^{-1}(0) \to \mathring M_{1,2}$:
  note that $\R_+\times \U(2)$ acts transitively on $\mu^{-1}(0)$, and the horizontal distribution is preserved by $\R_+ \times \SU(2)$ and therefore integrable, i.e., the canonical connection is flat.
  Since $\pi_1(\mathring M_{1,2}) = \Z_2$, the monodromy of the canonical connection lies in $\Z_2$. 
\end{proof}

\begin{remark}
  If $\sL \otimes \sK^{1/2}$ carries a flat connection with monodromy in $\Z_2$, then it must be the complexification of a real line bundle $\fl$.
  Solutions to \eqref{Eq_nSW} with $\Spin$--structure $\fs$ and $\U(1)$--bundle $\sL$ are in one-to-one correspondence with solutions with $\Spin$--structure $\fs \otimes \fl$ and $\U(1)$--bundle $\sL \otimes (\fl\otimes\C)$.
  Therefore we can assign to each Fueter section $\fI$ the unique $\Spin$--structure $\fs$ which makes $\sL \otimes \sK^{1/2}$ trivial.
\end{remark}


\bibliography{refs}
\end{document}